\newtheorem{theorem}{Theorem}[section]
\newtheorem{lemma}[theorem]{Lemma}
\theoremstyle{definition}
\newtheorem{definition}[theorem]{Definition}
\theoremstyle{remark}
\newtheorem{remark}[theorem]{Remark}
\newcommand{\inner}[2]{\left\langle#1,#2\right\rangle}
\newcommand{\Norm}[1]{\left\lVert#1\right\rVert}
\newcommand{\abs}[1]{\left\lvert#1\right\rvert}
\newcommand{\pa}[1]{\left( #1 \right)}
\newcommand{\br}[1]{\left\lbrace #1\right\rbrace}
\numberwithin{equation}{section}
\author{Kleber Carrapatoso}
\address{K. Carrapatoso: Ceremade, Universit\'e Paris Dauphine\\
Place du Mar\'echal De Lattre De Tassigny\\
75775 Paris cedex 16 - France}
\email{carrapatoso@ceremade.dauphine.fr}
\author{Amit Einav$^1$}
\address{A. Einav: Department of Pure Mathematics and Mathematical Statistics, University of Cambridge\\
Wilberforce Road, Cambridge\\
CB3 0WB, United Kingdom.}
\email{A.Einav@dpmms.cam.ac.uk}
\title{Chaos and Entropic Chaos in Kac's Model Without High Moments.}
\thanks{Author $^1$ was supported by ERC grant MATKIT}
\begin{document}

\maketitle

\begin{abstract}
In this paper we present a new local L\'evy Central Limit Theorem, showing convergence to stable states that are not necessarily the Gaussian, and use it to find new and intuitive entropically chaotic families with underlying one-particle function that has moments of order $2\alpha$, with $1<\alpha<2$. We also discuss a lower semi continuity result for the relative entropy with respect to our specific family of functions, and use it to show a form of stability property for entropic chaos in our settings. 

\end{abstract}

\section{Introduction}\label{sec: introduction}
One of the most important equation in the kinetic theory of gases, describing the evolution in time of the distribution function of a dilute gas, is the so-called Boltzmann equation. In its spatially homogeneous form it reads as
\begin{equation}\label{eq: boltzmann equation}
\begin{gathered}
\frac{\partial f}{\partial t}(v)=Q(f,f)(v), \quad v\in\mathbb{R}^d,\;\;t >0\\
f|_{t=0}=f_0,
\end{gathered}
\end{equation}
where $d\geq 2$ and $Q$ is the quadratic Boltzmann collision operator, given by
\begin{equation}\label{eq: boltzmann collision operator Q}
 Q(f,g)=\int_{\mathbb{R}^d \times \mathbb{S}^{d-1}}B\pa{\abs{v-v_\ast},\cos\pa{\theta}}\pa{f^\prime g^\prime_{\ast}-fg_\ast}d\sigma dv_{\ast}.
\end{equation}
We have used the notations $f^\prime (v)= f(v^\prime)$, $f_\ast(v)=f(v_\ast)$ and $f^\prime_\ast(v)=f(v_\ast^\prime)$ with
\begin{equation}\nonumber
v^\prime=\frac{v+v_\ast}{2}+\frac{\abs{v-v_\ast}}{2}\sigma, \;\; v^\prime_\ast=\frac{v+v_\ast}{2}-\frac{\abs{v-v_\ast}}{2}\sigma.
\end{equation}
representing the pre-collision velocities of particles with post-collision velocities $v,v_\ast$. The above relationships are a direct result of conservation of momentum and energy for the associated problem. The function $B$, called the Boltzmann collision kernel, is determined by the physics of the problem (mainly via the collisional cross-section) and it is assumed that $B$ is non-negative and depends only on the magnitude of the relative velocity, $\abs{v-v_\ast}$, and the cosine of the deviation angle between $v-v_\ast$ and $v^\prime-v^\prime_\ast$, $\theta \in [0,\pi]$.\\
In his work on equation (\ref{eq: boltzmann equation}), Boltzmann investigated the concept of the entropy and gave an interpretation to it in the microscopic setting, as well as a formula to it in terms of the distribution function $f$:
\begin{equation}\label{eq: entropy}
H(f)= \int_{\mathbb{R}^d}f(v)\log f(v) dv.
\end{equation}
One important contribution Boltzmann made was his famous $H-$Theorem: Under the evolution of (\ref{eq: boltzmann equation}) one has that
\begin{equation}\label{eq: h-theorem}
D(f) = -\frac{d}{dt}H(f) = \int_{\mathbb{R}^d }Q(f,f)(v)\log f(v) dv \geq 0,
\end{equation}
where $D(f)$, called \emph{the entropy production}, is defined as the minus of the formal derivative of the entropy under the evolution of the Boltzmann equation. One can easily see that functions of the form
\begin{equation}\nonumber
M_a(v)=\frac{e^{-\frac{\abs{v}^2}{2a}}}{\pa{2\pi a}^{\frac{d}{2}}},
\end{equation}
where $a>0$, satisfy $Q\pa{M_a,M_a}=0$, and as such present a stationary solution to (\ref{eq: boltzmann equation}) that is a critical point to the entropy functional. Such functions are usually called Maxwellians and represent the equilibrium states of the Boltzmann equation. One would hope that under suitable conditions we will gain convergence to equilibrium in our equation. This problem has been investigated by many authors, starting with Carleman and continuing to this day.\\
There are two fundamental questions in Kinetic Theory that pertain to the spatially homogeneous Boltzmann equation:
\begin{enumerate}[1.]
\item One of the main problems with the Boltzmann process is its \emph{irreversibility}. The reason behind this is Boltzmann's 'Stosszahlansatz' assumption that pre collisional particles can be considered to be independent. However, a closed system like that of dilute gas should obey Poincar\'e's recurrence principle and eventually come back to its original state. How can the equation be valid in that case? The answer to this, given by Boltzmann himself, is in the time scale. The Boltzmann equation, and trend to equilibrium, can only be valid in a time scale that is much smaller than the time it'll take the system to return to its original state. As such, the question of finding a \emph{quantitative} rate of convergence to equilibrium to the Boltzmann equation is of paramount importance.
\item While used in practice there is no full proof that is valid for times in the macroscopic scale, of how one can get the Boltzmann equation from reversible Newtonian laws. This, too, is a very important problem in Kinetic Theory. The best result attained so far is one by Lanford, \cite{Lanford}, in 1975. One possible intuition of how one can get an irreversible process from reversible laws lies with adding probability into the mixture. Either via randomness in the spatial variable, or via a many-particle model form which the Boltzmann equation arise as a mean field limit. While we will mainly focus on the latter option, we'd like to mention that there are other possibilities for the rise of such processes, such as loss of regularity and coarse graining at the microscopic level. 
\end{enumerate}
In his 1956 paper, \cite{Kac}, Kac attempted to give a partial solution to these two problems. Kac introduced a many-particle model, consisting of $N$ indistinguishable particle with one dimensional velocities, undergoing binary collision and constrained to the energy sphere $\mathbb{S}^{N-1}\pa{\sqrt{N}}$, which we will call 'the Kac's sphere' from this point onward. Kac's evolution equation is given by
\begin{equation}\label{eq: kac's evolution equation}
\frac{\partial F_N}{\partial t}\pa{v_1,\dots,v_N}= -N(I-Q)F_N\pa{v_1,\dots,v_N},
\end{equation}
where $F_N$ represents the distribution function of the $N$ particles, and the gain term $Q$ is given by
\begin{equation}\label{eq: gain term Q}
\begin{gathered}
QF\pa{t,v_1,\dots,v_N}=\frac{1}{2\pi}\frac{2}{N(N-1)}\sum_{i<j} \\
\int_0^{2\pi}F\pa{t,v_1,\dots,v_i(\theta),\dots,v_j(\theta),\dots,v_N}d\theta,
\end{gathered}
\end{equation}
with 
\begin{equation}\label{eq: v(theta)}
v_i(\theta)=v_i \cos (\theta) + v_j\sin(\theta), \quad v_j(\theta)=-v_i \sin(\theta) + v_j \cos(\theta).
\end{equation}
Motivated by Boltzmann's pre-collisional assumption, Kac defined the concept of \emph{Chaoticity} (what he called 'the Boltzmann property' in his paper) as follows:
\begin{definition}\label{def: chaoticty for functions}
A symmetric family of distribution functions $\br{F_N}_{N\in\mathbb{N}}$ on Kac's sphere is called \emph{chaotic} if there exists a distribution function on $\mathbb{R}$, $f$, such that for any $k\in\mathbb{N}$
\begin{equation}\label{eq: chaoticity for functions}
\lim_{N\rightarrow\infty}\Pi_k\pa{F_N}\pa{v_1,\dots,v_k}=f^{\otimes k}\pa{v_1,\dots,v_k},
\end{equation}
where $\Pi_k\pa{F_N}$ is the $k-$marginal of $F_N$, and the limit is taken in the weak topology induced by bounded continuous functions.
\end{definition}
Using a beautiful combinatorial argument, Kac showed that the property of chaoticity propagates with his evolution equation, i.e. if $\br{F_N\pa{0,v_1,\dots,v_N}}_{N\in\mathbb{N}}$ is $f_0-$chaotic then the solution to equation (\ref{eq: kac's evolution equation}), $\br{F_N\pa{t,v_1,\dots,v_N}}_{N\in\mathbb{N}}$ is $f_t-$chaotic, where $f_t$ solves a caricature of the Boltzmann equation: 
\begin{equation}\label{eq: caricature boltzmann}
\frac{\partial f}{\partial t}(v) = \frac{1}{2\pi}\int_{\mathbb{R}}\int_0^{2\pi}\pa{f\pa{v(\theta)}f\pa{v_\ast(\theta)}-f\pa{v}f\pa{v_\ast}}dv_\ast d\theta,
\end{equation}
with $v\pa{\theta}, v_\ast\pa{\theta}$ given by (\ref{eq: v(theta)}).
While we only got a distorted form of the Boltzmann equation, with collision kernel $B\equiv 1$, the ideas presented in Kac's paper were powerful enough that McKean managed to extend them to the $d-$dimensional case (see \cite{McKean}). Under similar condition to those presented by Kac, McKean construct a similar $N-$particle model from which the \emph{real} spatially homogeneous Boltzmann equation arose as mean field limit for certain collision kernels (mainly those who are independent of the relative velocity). We will not discuss this model in this work, and refer the interested reader to \cite{CGL, Einav2, McKean} for more information.\\
Giving a partial answer to the validation of the Boltzmann equation, Kac set out to try and find a partial solution to the rate of convergence as well. He noticed that his evolution equation is ergodic, with an equilibrium state represented by the constant function $1$. As such, for any fixed $N$, one can easily see that
\begin{equation}\nonumber
\lim_{t\rightarrow\infty}F_N\pa{t,v_1,\dots,v_N}=1.
\end{equation}
The rate of convergence to equilibrium is determined by the spectral gap
\begin{equation}\nonumber
\Delta_N = \inf \br{ \frac{\inner{\varphi}{N(I-Q)\varphi}_{L^2\pa{\mathbb{S}^{N-1}\pa{\sqrt{N}}}}}{\Norm{\varphi}^2_{L^2\pa{\mathbb{S}^{N-1}\pa{\sqrt{N}}}}}\; | \; \varphi\text{ is symmetric },\varphi\in L^2\pa{\mathbb{S}^{N-1}\pa{\sqrt{N}}}, \; \varphi\perp 1}.
\end{equation}
Kac's conjectured that
\begin{equation}\nonumber
\Delta = \liminf_{N\rightarrow\infty}\Delta_N >0,
\end{equation}
which would lead to 
\begin{equation}\label{eq: spectral gap rate of convergence}
\Norm{F_N\pa{t,\cdot}-1}_{L^2\pa{\mathbb{S}^{N-1}\pa{\sqrt{N}}}} \leq e^{-\Delta t}\Norm{F_N\pa{0,\cdot}-1}_{L^2\pa{\mathbb{S}^{N-1}\pa{\sqrt{N}}}}.
\end{equation}
The spectral gap problem remained open until $2000$, when a series of papers by authors such as Janversse, Maslen, Carlen, Carvahlo, Loss and Geronimo gave a satisfactory positive answer to the conjecture, even in McKean's model (see \cite{Jan,Maslen,CCL,CGL} for more details). However, the $L^2$ norm is catastrophic when dealing with chaotic families. In that case, attempts to pass to the limit in the number of particles is futile. \\
Taking lead from the real Boltzmann equation, one can define the entropy on Kac's sphere as 
\begin{equation}\label{eq: entropy}
H_N(F_N)=\int_{\mathbb{S}^{N-1}\pa{\sqrt{N}}}F_N \log F_N d\sigma^N,
\end{equation}
where $d\sigma^N$ is the uniform probability measure on Kac's sphere. The reason behind this choice is the \emph{extensivity} property of the entropy: In a very intuitive way, we'd like to think that 'nice' $f-$chaotic families behave like $F_N\approx f^{\otimes N}$, as such
\begin{equation}\label{eq: entropic chaoticity intuition}
H_N(F_N)\approx N\int_{\mathbb{R}}f(v)\log \pa{\frac{f(v)}{\gamma(v)}}dv,
\end{equation} 
 where $\gamma$ is the standard Gaussian on $\mathbb{R}$. This intuition was defined formally in \cite{CCRLV}, where the authors investigated the entropy functional on the Kac's sphere:
\begin{definition}\label{def: entropic chaoticity on the sphere} 
 An $f-$chaotic family of distribution functions on the sphere is called \emph{entropically chaotic} if
\begin{equation}\label{eq: entropic chaoticity for functions}
\lim_{N\rightarrow\infty}\frac{H_N(F_N)}{N}=\int_{\mathbb{R}}f(v)\log \pa{\frac{f(v)}{\gamma(v)}}dv=H(f |\gamma).
\end{equation} 
\end{definition}
The concept of entropic chaoticity is much stronger than that of chaoticity as it involves the correlation between arbitrary number of particles. We will verify this intuition later on in this paper.\\
Defining the \emph{entropy production} to be the minus of the formal derivation of the entropy under Kac's evolution equation
\begin{equation}\label{eq: entropy production}
 D_N(F_N)=-\frac{d}{dt}H_N(F_N)=\inner{\log F_N}{N(I-Q)F_N}_{L^2 \pa{\mathbb{S}^{N-1}\pa{\sqrt{N}}}},
\end{equation}
one can define the appropriate 'spectral gap' by 
\begin{equation}\nonumber
\Gamma_N = \inf_{F_N}\frac{D_N(F_N)}{H_N(F_N)}
\end{equation}
and ask if there is a positive constant, $C>0$, such that 
\begin{equation}\nonumber
\Gamma_N \geq C
\end{equation}
for all $N$. This problem is the so-called many-body Cercignani's conjecture, named after a similar conjecture posed for the real Boltzmann equation in \cite{Cerc}. If there exists such a $C$, we have that
\begin{equation}\nonumber
H_N(F_N(t)) \leq e^{-Ct}H_N(F_N(0)).
\end{equation}
Combining this with equation (\ref{eq: entropic chaoticity for functions}) and taking the limit as $N$ goes to infinity, one can hope to get that
\begin{equation}\label{eq: rate of convergence with cercignani's}
H(f_t|\gamma)\leq e^{-Ct}H(f_0 |\gamma).
\end{equation}
This, along with a known inequality on $H(f|\gamma)$ gives an exponential rate of decay towards the equilibrium. \\
Unfortunately, in general, Cercignani's many body conjecture is false. We will discuss this shortly, as it motivates part of the presented work. We refer the reader to \cite{CCRLV,Einav1,Einav2} for more information about this.\\
At this point, the reader might ask whether or not chaotic states exist, and whether the intuition $F_N \approx f^{\otimes N}$ is reasonable. The answer to both questions is Yes. We start by constructing a chaotic family following this exact intuition:\\
Given a distribution function $f$ on $\mathbb{R}$, we define 
\begin{equation}\label{eq: usual suspect}
F_N\pa{v_1,\dots,v_N}=\frac{f^{\otimes N}\pa{v_1,\dots.v_N}}{\mathcal{Z}_N \pa{f,\sqrt{N}}},
\end{equation}
where \emph{the normalisation function}, $\mathcal{Z}_N\pa{f,r}$ is defined by
\begin{equation}\label{eq: normalissation function}
\mathcal{Z}_N\pa{f,r} = \int_{\mathbb{S}^{N-1}(r)}f^{\otimes N}d\sigma^N_r,
\end{equation}
with $d\sigma^N_r$ the uniform probability measure on $\mathbb{S}^{N-1}(r)$. Kac himself discussed such functions, and have shown that they are chaotic when $f$ has very strong integrability conditions. In a recent paper by Carlen, Carvahlo, Le Roux, Loss and Villani, \cite{CCRLV}, the authors have managed to extend Kac's result to the following:
\begin{theorem}\label{thm: usual suspects are good}
Let $f$ be a probability density on $\mathbb{R}$ such that $f\in L^p(\mathbb{R})$ for some $p>1$, $\int_\mathbb{R} x^2 f(x)=1$ and $\int_\mathbb{R} x^4 f(x)dx<\infty$. Then the family of densities defined in (\ref{eq: usual suspect}) is $f-$chaotic. Moreover, it is $f-$entropically chaotic.
\end{theorem}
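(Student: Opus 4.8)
The plan is to reduce both assertions to a single analytic input: a local limit theorem for the probability density of the sum of the squares of independent copies of the one-particle law. Let $V_1,V_2,\dots$ be i.i.d.\ with law $f\,dv$, so that each $V_i^2$ has mean $\int_{\mathbb R}x^2 f\,dx=1$ and finite variance $\Sigma^2:=\int_{\mathbb R}x^4 f\,dx-1$ (by the moment hypotheses), and let $\psi_m$ be the density of $S_m:=V_1^2+\dots+V_m^2$ (a probability density for every $m$, and a bounded one for $m$ large, see below). An elementary polar-coordinate computation shows that $F_N$ of (\ref{eq: usual suspect})--(\ref{eq: normalissation function}) is precisely the density, with respect to the uniform measure on $\mathbb S^{N-1}\pa{\sqrt N}$, of the conditional law of $(V_1,\dots,V_N)$ given $S_N=N$, and, along the way, that $\mathcal Z_N(f,r)=2\,\abs{\mathbb S^{N-1}}^{-1}r^{-(N-2)}\psi_N(r^2)$. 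Bayes' formula for the law of $(V_1,\dots,V_k)$ conditionally on $S_N=N$ then yields the exact marginal identity
\begin{equation}\nonumber
\Pi_k(F_N)(v_1,\dots,v_k)=f^{\otimes k}(v_1,\dots,v_k)\;\frac{\psi_{N-k}\!\pa{N-v_1^2-\dots-v_k^2}}{\psi_N(N)},
\end{equation}
valid for each fixed $k$, the right-hand side vanishing when $v_1^2+\dots+v_k^2>N$.

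The key step is the local limit theorem. Since $f\in L^p(\mathbb R)$ for some $p>1$, finitely many self-convolutions of the density of $V_1^2$ lie in $L^\infty$, so $\psi_m$ is bounded for $m$ large; together with $\Sigma^2<\infty$, the classical (Gnedenko) local limit theorem gives
\begin{equation}\nonumber
\sup_{u\in\mathbb R}\;\abs{\;\sqrt m\,\psi_m(u)-\frac{1}{\sqrt{2\pi\Sigma^2}}\,e^{-\frac{(u-m)^2}{2m\Sigma^2}}\;}\;\xrightarrow[m\to\infty]{}\;0.
\end{equation}
Two consequences will be used below: $\psi_m(m)\sim(2\pi m\Sigma^2)^{-1/2}$, and there is $C<\infty$ with $\sup_{u}\psi_m(u)\le C\,m^{-1/2}$ for all $m$ large.

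From here the argument is routine. \emph{Chaoticity:} fix $k$ and a point $(v_1,\dots,v_k)$; for $N$ large, $\psi_{N-k}\pa{N-v_1^2-\dots-v_k^2}$ is the value of $\psi_{N-k}$ at a point within $O(1)$ of its mean $N-k$, so by the previous step the ratio in the marginal identity converges to $1$ pointwise in $(v_1,\dots,v_k)$ and is bounded by a constant $2C/c$ uniformly in $N$ (with $c>0$ a lower bound for $\sqrt N\,\psi_N(N)$). Dominated convergence with dominating function $(2C/c)\,f^{\otimes k}$ then gives $\Pi_k(F_N)\to f^{\otimes k}$ in $L^1(\mathbb R^k)$, hence in the weak topology of (\ref{eq: chaoticity for functions}). \emph{Entropic chaos:} expanding $F_N\log F_N$ and using the symmetry of $F_N$ one has
\begin{equation}\nonumber
\frac{H_N(F_N)}{N}=\int_{\mathbb R}\Pi_1(F_N)(v)\log f(v)\,dv-\frac1N\log\mathcal Z_N\pa{f,\sqrt N}.
\end{equation}
One checks (using $f\in L^p$ on $\br{f\geq1}$ and the finite second moment on $\br{f<1}$) that $f\log f\in L^1(\mathbb R)$, so the first term converges to $\int_{\mathbb R}f\log f$ by the same dominated-convergence argument applied to $\Pi_1(F_N)=f\cdot(\text{bounded}\to1)$. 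For the second term, insert $\psi_N(N)\sim(2\pi N\Sigma^2)^{-1/2}$ and Stirling's formula for $\abs{\mathbb S^{N-1}}=2\pi^{N/2}/\Gamma(N/2)$ into $\mathcal Z_N(f,\sqrt N)=2\,\abs{\mathbb S^{N-1}}^{-1}N^{-(N-2)/2}\psi_N(N)$: the $\tfrac12\log N$ produced by $N^{-(N-2)/2}$ cancels the one produced by Stirling, $\Sigma^2$ is absorbed into the $\tfrac1N\log$, and one is left with $-\tfrac1N\log\mathcal Z_N(f,\sqrt N)\to\tfrac12\log(2\pi e)$. Since $-\int_{\mathbb R}f\log\gamma=\tfrac12\log(2\pi)+\tfrac12$ (the second moment being $1$), the two limits add up to $\int_{\mathbb R}f\log f+\tfrac12\log(2\pi e)=H(f\,|\,\gamma)$, which is (\ref{eq: entropic chaoticity for functions}).

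The only genuinely delicate point --- and the one around which the rest of the paper is organised --- is the local limit theorem of the second step under the weak hypothesis $f\in L^p$, $p>1$ alone: one must show, via decay of the characteristic function of $V_1^2$, that $\psi_m$ becomes bounded after finitely many convolutions, and then run the Fourier-inversion proof of the local limit theorem, in which the finiteness of the fourth moment of $f$ is exactly what produces the Gaussian profile and the $m^{-1/2}$ normalisation. When $f$ has only moments of order $2\alpha$ with $1<\alpha<2$ this input breaks down and must be replaced by an $\alpha$-stable local limit theorem with an $m^{-1/(2\alpha)}$ normalisation --- which is the analytic heart the paper develops. Everything past that input is the bookkeeping above, anchored to the exact marginal identity of the first step.
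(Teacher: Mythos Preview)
Your proposal is correct and follows essentially the same route as the paper. Note that Theorem~\ref{thm: usual suspects are good} is not proved in this paper but cited from \cite{CCRLV}; however, the paper's own proof of the generalisation (Theorem~\ref{thm: chaoticity and entropic chaoticity general}) is structurally identical to what you wrote: the same marginal identity via the normalisation function (Lemmas~\ref{lem: interpretation of normalisation function} and~\ref{lem: k-th marginal}), the local limit theorem for $h^{\ast N}$ (Theorem~\ref{thm: normalisation function fixed f approximation} in the fourth-moment case, the L\'evy version in Section~\ref{sec: local central limit} otherwise), pointwise convergence of marginals upgraded to $L^1$ by dominated convergence, and the same expansion of $H_N(F_N)/N$ with Stirling for the $\log\mathcal Z_N$ term. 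The only cosmetic difference is that the paper passes to $L^1$ via the generalised dominated convergence trick of Lemma~\ref{lem: simple chaoticity condition} (using $g_N=\Pi_k(F_N)+f^{\otimes k}$), whereas you use a fixed dominating function from the uniform bound $\sup_u\sqrt m\,\psi_m(u)\le C$; both are fine.
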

The main tool to prove Theorem \ref{thm: usual suspects are good} is a local central limit theorem, giving an approximation for the normalisation function, $\mathcal{Z}_N\pa{f,\sqrt{r}}$: 
\begin{theorem}\label{thm: normalisation function fixed f approximation}
Let $f$ satisfy the conditions of Theorem \ref{thm: usual suspects are good}, then 
\begin{equation}\label{eq: normalisation function fixed f approximation}
\mathcal{Z}_N(f,\sqrt{u})=\frac{2}{\sqrt{N}\Sigma \left\lvert \mathbb{S}^{N-1}\right\rvert  u^{\frac{N-2}{2}}}\left( \frac{e^{-\frac{(u-N)^2}{2N\Sigma^2}}}{\sqrt{2\pi}}+\lambda_N(u) \right),
\end{equation}
where $\Sigma^2 = \int_{\mathbb{R}}v^4f(v)dv - 1$ and $\sup_u \abs{\lambda_N(u)}\underset{N\rightarrow\infty}{\longrightarrow}0$.
\end{theorem}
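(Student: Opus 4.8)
The plan is to write the normalisation function as an integral over the sphere and convert it into a density-of-sums statement to which a local central limit theorem applies. Concretely, parametrising the uniform measure on $\mathbb{S}^{N-1}(\sqrt u)$ and using the co-area formula, one has the classical identity
\begin{equation}\nonumber
\mathcal{Z}_N(f,\sqrt u)\,\abs{\mathbb{S}^{N-1}}\,N\,u^{\frac{N-2}{2}}\,\frac{1}{2} \;=\; \sqrt{N}\,\Sigma\cdot \rho_N(u),
\end{equation}
where $\rho_N$ is (up to the above explicit bookkeeping factors coming from the surface area of the sphere and the Jacobian of the radial variable) the probability density at the point $u$ of the random variable $S_N=v_1^2+\dots+v_N^2$, the $v_i$ being i.i.d. with law $f$. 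So the real content of the theorem is: the density of $S_N$ at $u$ equals $\frac{1}{\sqrt{N}\Sigma}\big(\tfrac{1}{\sqrt{2\pi}}e^{-(u-N)^2/(2N\Sigma^2)}+\widetilde\lambda_N(u)\big)$ with $\sup_u\abs{\widetilde\lambda_N(u)}\to 0$. Here the $v_i^2$ have mean $\int v^2 f=1$, finite variance $\int v^4 f - 1=\Sigma^2<\infty$, and — crucially — the fourth-moment hypothesis on $f$ gives $\mathbb{E}[(v_i^2)^2]<\infty$, i.e.\ the summands have finite variance, which is exactly what a local CLT needs.

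The key steps, in order, are: \emph{(i)} Establish the co-area identity above, keeping careful track of all the explicit prefactors so that the final formula matches \eqref{eq: normalisation function fixed f approximation}; this is a routine change of variables but must be done precisely. \emph{(ii)} Reduce to a local CLT for the i.i.d.\ sequence $w_i:=v_i^2$, which have a density (since $f$ is a density, $w_i$ has a density on $(0,\infty)$) and finite variance. \emph{(iii)} Invoke a local CLT in the form: if $w_i$ are i.i.d.\ with mean $m$, variance $\sigma^2$, and the density of $w_1+\dots+w_n$ is bounded for some $n$ (equivalently $f\in L^p$ for some $p>1$ propagates, via Young's inequality for convolutions, to $L^\infty$-boundedness of the density of a finite sum of the $w_i$), then
\begin{equation}\nonumber
\sup_{x}\;\abs{\;\sqrt{n}\,\sigma\, g_n(x) - \frac{1}{\sqrt{2\pi}}\,e^{-\frac{(x-nm)^2}{2n\sigma^2}}\;}\;\xrightarrow[n\to\infty]{}\;0,
\end{equation}
where $g_n$ is the density of the sum. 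The $L^p$ hypothesis is precisely there to make the density of a partial sum bounded, so that the classical Gnedenko-type local limit theorem (or its proof via Fourier inversion) applies uniformly in $x$. \emph{(iv)} Substitute $x=u$, $n=N$, $m=1$, $\sigma=\Sigma$, and collect with step (i) to read off \eqref{eq: normalisation function fixed f approximation}, defining $\lambda_N(u)$ as the resulting error term and noting $\sup_u\abs{\lambda_N(u)}\to 0$.

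The main obstacle I expect is step (iii): obtaining the local limit theorem \emph{with uniform control in $x$} (i.e.\ an $L^\infty$ statement, not just pointwise convergence) from only $f\in L^p$ for some $p>1$ and four moments. The standard route is Fourier-analytic: write $g_N$ via inverse Fourier transform of $\widehat{g_1}^N$, split the frequency integral into a neighbourhood of the origin (where a Taylor expansion of $\widehat{g_1}$ and the finite variance give the Gaussian) and the complement (where one needs $\abs{\widehat{g_1}(\xi)}<1$ away from $0$ and decay at infinity). Boundedness of $\widehat{g_1}$ on the tail, together with $\abs{\widehat{g_1}(\xi)}\le\abs{\widehat{g_1}(\xi)}^{1/p'}\cdot(\text{something integrable})$ coming from $g_1=f(\sqrt{\cdot})(\cdots)\in L^{p}$ after the change of variables, is what forces the $p>1$ assumption; handling the region where $\abs{\widehat{g_1}}$ is close to $1$ but $\xi$ is not small requires the usual argument that $\abs{\widehat{g_1}(\xi)}=1$ only if the law is supported on a lattice, which is excluded since $g_1$ is absolutely continuous. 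Making all three frequency regions yield errors that are $o(N^{-1/2})$ \emph{uniformly in $u$} is the delicate part; everything else is bookkeeping.
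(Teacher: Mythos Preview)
Your plan is correct and matches the route taken in the paper (and in \cite{CCRLV}, from which the theorem is quoted): Lemma~\ref{lem: interpretation of normalisation function} is exactly your step~(i), giving $\mathcal{Z}_N(f,\sqrt{u})=2h^{\ast N}(u)/\bigl(\abs{\mathbb{S}^{N-1}}u^{(N-2)/2}\bigr)$ with $h$ the density of $V^2$, and the Fourier-analytic local CLT you sketch in step~(iii) is precisely the engine the paper builds in Section~\ref{sec: local central limit} (there for stable limits, but the Gaussian case is the same with $\alpha=2$). Two small remarks: first, your displayed co-area identity carries a spurious factor of $N$ on the left-hand side---compare directly with Lemma~\ref{lem: interpretation of normalisation function} and you will see the correct bookkeeping is $\tfrac{1}{2}\abs{\mathbb{S}^{N-1}}u^{(N-2)/2}\mathcal{Z}_N=\rho_N(u)$; second, for the high-frequency region you invoke the qualitative non-lattice argument, whereas the paper obtains the needed quantitative bound $\abs{\widehat{h}(\xi)}\leq 1-\eta(\beta)$ for $\abs{\xi}>\beta$ via an entropy--moment argument (Theorem~\ref{thm:high frequency}), which is what actually makes the tail contribution $o(N^{-1/2})$ uniformly.
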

We'd like to mention at this point that the above theorems were extended to to McKean's model by the first author in \cite{Carr}.\\
In \cite{Einav1} (and later on in \cite{Einav2} for McKean's model) the second author extended the above local central limit theorem to the case where the underlying generating function, $f$, also varies with $N$:
\begin{theorem}\label{thm: einav1}
Let $0<\eta<1$ and $\delta_N=\frac{1}{N^\eta}$. Define
\[
f_N(v)=\delta_N M_{\frac{1}{2\delta_N}}(v)+(1-\delta_N)M_{\frac{1}{2(1-\delta_N)}}(v),
\]
where $M_a(v)=\frac{e^{-\frac{v^2}{2a}}}{\sqrt{2\pi a}}$. Then 
\begin{equation}\label{eq: normalisation function f_N approximation}
\mathcal{Z}_N(f,\sqrt{u})=\frac{2}{\sqrt{N}\Sigma_N \left\lvert \mathbb{S}^{N-1}\right\rvert  u^{\frac{N-2}{2}}}\left( \frac{e^{-\frac{(u-N)^2}{2N\Sigma_N^2}}}{\sqrt{2\pi}}+\lambda_N(u) \right),
\end{equation}
where $\Sigma^2 = \frac{3}{4\delta_N(1-\delta_N)}-1$ and $\sup_u \abs{\lambda_N(u)}\underset{N\rightarrow\infty}{\longrightarrow}0$. Moreover, using the same notation as (\ref{eq: usual suspect}) 
with $f$ replaced by $f_N$, one finds that there exists $C_{\eta^\prime}>0$, depending only on $\eta^\prime$ such that
\begin{equation}\label{eq: cercignani's conjecture is false}
\Gamma_N \leq \frac{D_N(F_N)}{H_N(F_N)} < \frac{C_{\eta^\prime}}{N^{\eta^\prime}}
\end{equation}
for $0<\eta^\prime<\eta$.
\end{theorem}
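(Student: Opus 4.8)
Both parts hinge on realising $\mathcal{Z}_N(f_N,\sqrt{u})$ as a convolution density. By the co-area formula, if $g_N$ is the law of $v^2$ for $v\sim f_N$ and $g_N^{*N}$ its $N$-fold convolution --- the density at $u$ of $\sum_{i=1}^N v_i^2$ --- then $\mathcal{Z}_N(f_N,\sqrt{u})=\tfrac{2}{\abs{\mathbb{S}^{N-1}}\,u^{(N-2)/2}}\,g_N^{*N}(u)$, and since $f_N$ is an explicit mixture of two centred Gaussians, $g_N$ is a mixture of two $\chi^2$-type laws with the explicit characteristic function $\widehat{g_N}(t)=\delta_N(1-it/\delta_N)^{-1/2}+(1-\delta_N)(1-it/(1-\delta_N))^{-1/2}$. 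Centring and rescaling, $Y=(v^2-1)/\Sigma_N$, Fourier inversion reduces \eqref{eq: normalisation function f_N approximation} to a \emph{local} central limit theorem for $S_N=\sum_iY_i$, via $\sup_u\abs{\lambda_N(u)}\le\tfrac1{2\pi}\int_{\mathbb{R}}\abs{\widehat{\phi_N}(\tau/\sqrt N)^N-e^{-\tau^2/2}}\,d\tau$ with $\widehat{\phi_N}$ the characteristic function of $Y$. This is the scheme of the proof of Theorem \ref{thm: normalisation function fixed f approximation}; the only novelty is to follow how the estimates deteriorate as $\delta_N=N^{-\eta}\to0$.

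I would split that integral at $\abs{\tau}=R$, $\abs{\tau}=c\,\delta_N\Sigma_N\sqrt N$ and $\abs{\tau}=c\,\Sigma_N\sqrt N$. On $\abs{\tau}\le R$ one Taylor-expands: since $\delta_N\Sigma_N\sim\tfrac{\sqrt3}{2}N^{-\eta/2}\to0$, \emph{both} factors of $\widehat{g_N}$ have small argument at $t=\tau/\sqrt N$; the choice $\Sigma_N^2=\mathrm{Var}(v^2)$ makes the $O(t^2)$ terms reproduce $1-t^2/2$, and the Lagrange remainder (controlled by $\mathbb{E}_{f_N}v^6\sim\delta_N^{-2}$) yields $\widehat{\phi_N}(\tau/\sqrt N)^N=e^{-\tau^2/2+O(\abs{\tau}^3 N^{(\eta-1)/2})}$, so this range contributes $O(N^{(\eta-1)/2})$ --- and here is exactly where $\eta<1$ is used. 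On $R\le\abs{\tau}\le c\,\delta_N\Sigma_N\sqrt N$ one uses $\abs{\widehat{\phi_N}(t)}\le 1-ct^2$ for $\abs{t}\le\delta_N\Sigma_N$ together with $N\delta_N=N^{1-\eta}\to\infty$ to get a bound of order $e^{-cN^{1-\eta}}$; on $\abs{\tau}\ge c\,\Sigma_N\sqrt N$ the $\abs{s}^{-1/2}$-decay of $\widehat{g_N}(s)$ and a uniform bound on $\Norm{\widehat{g_N}}_{L^3(\mathbb{R})}$ give a bound of order $\kappa^N$, $\kappa<1$. This proves $\sup_u\abs{\lambda_N(u)}\to0$ and, keeping track of constants, the rate $\sup_u\abs{\lambda_N(u)}\le C_\eta N^{-(1-\eta)/2}$, which the second part needs. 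The delicate point is the intermediate window: the wide component $\delta_NM_{1/(2\delta_N)}$ keeps $\abs{\widehat{\phi_N}}$ near $1$ on a frequency range of length $\sim\Sigma_N\sqrt N\to\infty$, so the quantitative lower bound $1-\abs{\widehat{\phi_N}(t)}\gtrsim t^2/\Sigma_N^2$ there must be extracted from the fact that the dominant mass $(1-\delta_N)M_{1/(2(1-\delta_N))}$ is a genuine non-degenerate Gaussian.

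For the second part take $F_N$ as in \eqref{eq: usual suspect} with $f$ replaced by $f_N$. Since $Q$ is self-adjoint on $L^2(d\sigma^N)$ and $\int(I-Q)F_N\,d\sigma^N=0$, a short computation with $\log F_N=\sum_i\log f_N(v_i)-\log\mathcal{Z}_N$ and with $(I-Q)\sum_i\log f_N(v_i)$ gives the exact identity
\[
D_N(F_N)=2N\pa{A_N-B_N},\qquad
A_N=\int_{\mathbb{R}}\log f_N\;\Pi_1(F_N)\,dv,
\]
where $B_N=\int_{\mathbb{R}}\log f_N(v_1)\,\widetilde{\Pi}_1(v_1)\,dv_1$ and $\widetilde{\Pi}_1$ is the one-particle marginal of the $SO(2)$-average (over in-plane rotations of $(v_1,v_2)$) of $\Pi_2(F_N)$. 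Applying the first part to $\mathcal{Z}_N$ and $\mathcal{Z}_{N-1}$ together with Stirling gives $\Pi_1(F_N)=f_N\,(1+o(1))$ uniformly, hence
\[
H_N(F_N)=N A_N-\log\mathcal{Z}_N(f_N,\sqrt N)=\tfrac N2\log 2+o(N),
\]
so in particular the family is $f_N$-entropically chaotic (relative to the unit Gaussian) and $H_N(F_N)\ge cN$ for large $N$. For the numerator, replacing $\Pi_1(F_N)$ by $f_N$ and $\widetilde\Pi_1$ by the rotation-average of $f_N^{\otimes2}$ turns $2N(A_N-B_N)$ into $N\,D(f_N)$, the entropy production of the Kac caricature \eqref{eq: caricature boltzmann} evaluated at $f_N$; and $D(f_N)=O(\delta_N\log N)$, because $f_N=M_{1/(2(1-\delta_N))}+\delta_N\pa{M_{1/(2\delta_N)}-M_{1/(2(1-\delta_N))}}$, the Gaussian $M_{1/(2(1-\delta_N))}$ is invariant under the Kac collision (so, its $\log$ being quadratic, it contributes nothing after $\theta$-averaging because $\int v^2f_N=1$), and the leftover term is estimated directly from $\log f_N(v)=-\delta_Nv^2+O(\log N)$ at infinity against the Gaussian decay of $f_N$. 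It then remains to bound $2N\big[(A_N-A_N^0)-(B_N-B_N^0)\big]$, where $A_N^0=\int f_N\log f_N$ and $2(A_N^0-B_N^0)=D(f_N)$: writing $\Pi_1(F_N)=f_N\rho_N$ and the density behind $\widetilde\Pi_1$ as $f_N^{\otimes2}\sigma_N$ with $\rho_N,\sigma_N$ the explicit conditioning factors delivered by the first part --- which carry the \emph{same} normalisation $(1/\sqrt{2\pi}+\lambda_N(N))^{-1}$ and the same Gaussian profile --- the leading corrections cancel in $A_N-B_N$, and what survives is $O(N^{1-\eta}\log N)$ once one inserts $\sup_u\abs{\lambda_N(u)}\lesssim N^{-(1-\eta)/2}$ and uses that the weight $\log f_N$ grows only like $\delta_N\abs{v}^2$.

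Combining the two parts, $D_N(F_N)=O(N^{1-\eta}\log N)$ while $H_N(F_N)\ge cN$, so $D_N(F_N)/H_N(F_N)=O(N^{-\eta}\log N)<C_{\eta'}/N^{\eta'}$ for every $\eta'<\eta$ as soon as $N$ is large, the finitely many small $N$ being absorbed into $C_{\eta'}$; and $\Gamma_N\le D_N(F_N)/H_N(F_N)$ by the very definition of $\Gamma_N$. The two steps I expect to be genuinely hard are: in the first part, the uniform quantitative control of $\widehat{\phi_N}$ on the frequency window $[\,c\,\delta_N\Sigma_N\sqrt N,\,c\,\Sigma_N\sqrt N\,]$, whose length grows with $N$; and in the second part, making the cancellation between the conditioning corrections of the one- and two-particle marginals precise enough that $\eta'$ can be pushed arbitrarily close to $\eta$ rather than only to $(1-\eta)/2$.
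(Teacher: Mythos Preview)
The present paper does \emph{not} prove this theorem: it is quoted in the introduction as a result of \cite{Einav1} (and its extension \cite{Einav2}), and is used only as motivation. There is therefore no proof in this paper to compare your attempt against.

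That said, your outline is broadly the one carried out in \cite{Einav1}. For the first part, the scheme you describe --- pass to the law $g_N$ of $v^2$, use Fourier inversion on the centred rescaled variable, and split the frequency axis into low/intermediate/high ranges --- is exactly the method used there (and, for fixed $f$, in \cite{CCRLV}); the whole point is to track how the constants in the Berry--Esseen--type bounds degenerate with $\delta_N=N^{-\eta}$, and the decisive use of $\eta<1$ through the third absolute moment $\mathbb{E}|v^2-1|^3\sim\delta_N^{-2}$ is correct. Your identity $D_N(F_N)=2N(A_N-B_N)$, obtained from self-adjointness of $Q$ and the product structure of $F_N$, is also correct and is the starting point of the entropy-production computation in \cite{Einav1}.

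The genuine gap is the one you flag yourself. Without the cancellation between the conditioning corrections in $A_N-A_N^0$ and $B_N-B_N^0$, the crude bound $|A_N-A_N^0|+|B_N-B_N^0|=O(\sup_u|\lambda_N(u)|)=O(N^{-(1-\eta)/2})$ only gives $D_N(F_N)=O(N^{(1+\eta)/2})$, hence $\Gamma_N\lesssim N^{-(1-\eta)/2}$, which is strictly weaker than the claimed $N^{-\eta'}$ for all $\eta'<\eta$ once $\eta>1/3$. In \cite{Einav1} this is handled not by a cancellation argument between the one- and two-particle corrections, but by expanding $A_N$ and $B_N$ separately to one further order in $1/N$ using the explicit Gaussian profile of the normalisation ratio (your $\rho_N,\sigma_N$) and the exact moments of $f_N$; the point is that the leading correction to $A_N-A_N^0$ and to $B_N-B_N^0$ is the \emph{same} multiple of $\int v^2\log f_N\cdot f_N\,dv$, and only after this is made quantitative does the remainder drop to $O(N^{-(1-\eta)})$, which suffices. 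Your sketch identifies the mechanism correctly but does not carry it out; that step is where most of the work in \cite{Einav1} lies. A minor point: the asserted value $H_N(F_N)=\tfrac{N}{2}\log 2+o(N)$ is not the actual limit (indeed the paper notes after the theorem that $\lim H_N(F_N)/N\neq H(M_{1/2}\,|\,\gamma)$), but only $H_N(F_N)\ge cN$ is needed and that part of your argument is fine.
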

The above theorem shows exactly how high order moments play an important role in the evaluation of the minimal entropy-entropy production ratio, $\Gamma_N$. The family constructed in Theorem \ref{thm: einav1} has two peculiar properties:
\begin{enumerate}[(i)]
\item \begin{equation}\nonumber
\int_{\mathbb{R}}v^4 f_N(v)dv = \frac{3}{4\delta_N(1-\delta_N)}\underset{N\rightarrow\infty}{\longrightarrow}\infty.
\end{equation}
\item One can check that $F_N$ is $M_{\frac{1}{2}}-$chaotic yet $\lim_{N\rightarrow\infty}\frac{H_N(F_N)}{N}$ exists but doesn't equal $H\pa{M_{\frac{1}{2}}|\gamma}$! 
\end{enumerate}
 Will the many-body Cercignani's conjecture be true if we restrict ourselves to families that violates $(i)$ and $(ii)$? is there a connection between $(i)$ and $(ii)$?\\
 Motivated by the above questions, we set out to investigate the effects of the fourth moment on chaoticity and entropic chaoticity. We consider families of distribution functions on Kac's sphere, $\br{F_N}_{N\in\mathbb{N}}$, of the form (\ref{eq: usual suspect}) where the underlying generating function $f$ is independent of $N$, but has moment of order $2\alpha$, with $1<\alpha<2$. Surprisingly enough, a lot can be said about this case, much like the case where $f$ has a fourth moment.\\
Before we state our main results, we will extend the definition of chaoticity and entropic chaos to general symmetric measures on Kac's sphere, as well as define the relative entropy and the relative Fisher information functional.
\begin{definition}
Given two probability measures, $\mu,\nu$, on a Polish space $X$, we define the relative entropy $H(\mu | \nu)$ as
\begin{equation}\label{def: relative entropy }
H(\mu|\nu) = \int_X h\log h d\nu,
\end{equation}
where $h=\frac{d\mu}{d\nu}$, and $H(\mu|\nu)=\infty$ if $\mu$ is not absolutely continuous with respect to $\nu$.
\end{definition}  
Notice that in our notations
\begin{equation}\nonumber
H_N(F_N)=H(F_N d\sigma^N | d\sigma^N),
\end{equation}
with an underlying space $X=\mathbb{S}^{N-1}\pa{\sqrt{N}}$.
\begin{definition}\label{def: symmetric measure}
A probability measure on a space $X$ that is invariant under the action of the symmetric group $\mathcal{S}_N$ is called symmetric if
\begin{equation}\label{eq: symmetric measure}
\int_X fd\mu = \int_X f\circ \tau d\mu,
\end{equation}
for any $\tau\in\mathcal{S}_N$, and $f\in C_b(X)$.
\end{definition}
\begin{definition}\label{def: chaoticity of measure}
A family of symmetric probability measures on Kac's sphere, $\br{\mu_N}_{N\in\mathbb{N}}$, is called $\mu-$chaotic, where $\mu$ is a probability measure on $\mathbb{R}$, if for any $k\in\mathbb{N}$
\begin{equation}\label{eq: def of chaoticity for measures}
\lim_{N\rightarrow\infty}\Pi_k\pa{\mu_N}=\mu^{\otimes k},
\end{equation}
where $\Pi_k(\mu_N)$ is the $k-$th marginal of $\mu_N$ and the limit is in the weak topology.
\end{definition}
It is a known result (see \cite{Sznitman} for instance) that it is enough to check the marginals for $k=1,2$ in order to conclude chaoticity.
\begin{definition}\label{def: def of entropic chaoticity for measures}
A symmetric $\mu-$chaotic family of probability measures on Kac's sphere, $\br{\mu_N}_{N\in\mathbb{N}}$, is called entropically chaotic if 
\begin{equation}\label{eq: def of entropci choticity for measures}
\lim_{N\rightarrow\infty}\frac{H_N\pa{\mu_N | d\sigma^N}}{N}=H\pa{\mu | \gamma},
\end{equation}
where $d\sigma^N$ is the uniform probability measure on Kac's sphere and $H(\mu | \gamma) $ is the relative entropy of $\mu$ and $\gamma(v)dv$.
\end{definition}
Lastly we define the relative Fisher information functional, which has intimate relation to the relative entropy. As it requires a lot more information on the space on which the measures act, we define it only on $\mathbb{R}$, and Kac's Sphere:
\begin{definition}\label{def: relative fisher transform}
Given two probability measures, $\mu,\nu$ on $\mathbb{R}$, we define the relative Fisher information functional $I(\mu | \nu)$ as
\begin{equation}\label{def: relative fisher information}
I(\mu|\nu) = \int_{\mathbb{R}} \frac{\abs{h^\prime (x)}^2}{h(x)} d\nu(x) = 4\int_{\mathbb{R}}\abs{\frac{d}{dx}\sqrt{h(x)}}^2d\nu(x),
\end{equation}
where $h=\frac{d\mu}{d\nu}$, and $I(\mu|\nu)=\infty$ if $\mu$ is not absolutely continuous with respect to $\nu$.\\
Given two probability measures, $\mu_N,\nu_N$ on Kac's sphere, we define the relative Fisher information functional $I_N(\mu_N | \nu_N)$ as
\begin{equation}\label{def: relative fisher information}
I_N(\mu_N|\nu_N) = \int_{\mathbb{S}^{N-1}\pa{\sqrt{N}}} \frac{\abs{\nabla_S h}^2}{h} d\nu,
\end{equation}
where $h=\frac{d\mu_N}{d\nu_N}$, and $I_N(\mu_N|\nu_N)=\infty$ if $\mu_N$ is not absolutely continuous with respect to $\nu_N$. Here $\nabla_S$ denotes the components of the usual gradient on $\mathbb{R}^N$ that is tangential to Kac's sphere.
\end{definition}
The main results of our paper are as follows:
\begin{theorem}\label{thm: chaoticity and entropic chaoticity general}
Let $f$ be a probability density such that $f\in L^{p}$ for some $p>1$ and $\int x^2 f(x)dx=1$. Let 
\begin{equation}\label{eq: definition of mu_2 in thm}
\nu_f(x)=\int_{-\sqrt{x}}^{\sqrt{x}}y^4 f(y)dy
\end{equation} 
and assume that $\nu_f(x)\underset{x\rightarrow\infty}{\sim} C_{S} x^{2-\alpha}$ for some $C_{S}>0$ and $1<\alpha<2$. Then the family
\begin{equation}\nonumber
F_N=\frac{f^{\otimes N}}{\mathcal{Z}_N\pa{f,\sqrt{N}}}
\end{equation} 
is $f-$chaotic. Moreover, it is $f-$entropically chaotic.
\end{theorem}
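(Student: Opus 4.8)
The plan is to establish a local limit theorem adapted to the heavy tail of $f$ — the ``local L\'evy CLT'' announced in the abstract — and then to feed it into the marginal and entropy computations exactly as Theorem~\ref{thm: normalisation function fixed f approximation} is fed into the proof of Theorem~\ref{thm: usual suspects are good} in \cite{CCRLV}. Write $v_1,v_2,\dots$ for i.i.d.\ random variables with law $f(v)\,dv$ and set $W_i=v_i^2$, so that $\mathbb{E}W_i=1$ because $\int v^2f=1$. The hypothesis $\nu_f(x)\sim C_S x^{2-\alpha}$ says precisely that the truncated second moment of $W_i-1$ obeys $\mathbb{E}\bigl[(W_i-1)^2\mathbf{1}_{|W_i-1|\le x}\bigr]\sim C_S x^{2-\alpha}$, and since $W_i-1\ge-1$ has trivial left tail, the classical Gnedenko--Kolmogorov criteria place $W_i-1$ in the domain of attraction of a maximally skewed $\alpha$-stable law, with a fixed density $g_\alpha$ that is bounded and everywhere positive on $\mathbb{R}$, and normalising constants $b_N=A_\alpha N^{1/\alpha}$ for an explicit $A_\alpha=A_\alpha(C_S)$. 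The restriction $1<\alpha<2$ is exactly what makes $W_i-1$ integrable — so no centring beyond subtracting $N$ is needed — and also what makes $g_\alpha$ genuinely non-Gaussian.

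\textbf{Step 1: the local L\'evy central limit theorem.} Let $R_N$ be the density of $W_1+\dots+W_N$ and $\phi(\xi)=\mathbb{E}e^{i\xi W_1}$. The assumption $f\in L^p$, $p>1$, is used — as in \cite{CCRLV} — to guarantee that $\phi^N\in L^1$ for $N$ large and that $|\phi|$ stays bounded away from $1$ at high frequencies, so that $R_N(u)=\frac{1}{2\pi}\int_{\mathbb{R}}e^{-i\xi u}\phi(\xi)^N\,d\xi$ defines a bounded continuous function. Rescaling and splitting this integral at $|\xi|=\delta b_N$: on the central part one replaces $\phi(\xi/b_N)$ by the expansion coming from the regularly varying tail of $W_1$ and shows, with a quantitative dominated-convergence bound, that $e^{-i\xi N/b_N}\phi(\xi/b_N)^N\to\widehat{g_\alpha}(\xi)$; on the outer part the high-frequency control makes the contribution $o(1/b_N)$. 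The outcome is
\[
\sup_{u}\Bigl|\,b_N R_N(u)-g_\alpha\!\Bigl(\tfrac{u-N}{b_N}\Bigr)\Bigr|\xrightarrow[N\to\infty]{}0 .
\]
Since passing to polar coordinates gives $\mathcal{Z}_N(f,\sqrt{u})=\dfrac{2R_N(u)}{u^{(N-2)/2}|\mathbb{S}^{N-1}|}$, this is equivalent to
\[
\mathcal{Z}_N(f,\sqrt{u})=\frac{2}{u^{(N-2)/2}|\mathbb{S}^{N-1}|\,b_N}\Bigl(g_\alpha\!\bigl(\tfrac{u-N}{b_N}\bigr)+\lambda_N(u)\Bigr),\qquad \sup_u|\lambda_N(u)|\xrightarrow[N\to\infty]{}0,
\]
the exact analogue of Theorem~\ref{thm: normalisation function fixed f approximation} with $g_\alpha$ in place of the Gaussian and $b_N$ in place of $\sqrt{N}\Sigma$. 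I expect this to be the main obstacle: one must upgrade the classical stable-domain limit theorems from weak convergence to a uniform statement about densities, under the single integrability hypothesis $f\in L^p$, controlling the characteristic function both near $0$ (where no Taylor expansion is available, since $\mathbb{E}W_1^2=\infty$) and at infinity.

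\textbf{Step 2: chaoticity.} Insert the estimate of Step~1 into the exact $k$-marginal formula: writing $V_k=(v_1,\dots,v_k)$, $|V_k|^2=v_1^2+\dots+v_k^2$, and $\rho_{N,k}$ for the (explicit, $\Gamma$-function) density of the first $k$ coordinates under the uniform measure $d\sigma^N$,
\[
\Pi_k(F_N)(V_k)=f^{\otimes k}(V_k)\,\rho_{N,k}(V_k)\,\frac{\mathcal{Z}_{N-k}\bigl(f,\sqrt{N-|V_k|^2}\bigr)}{\mathcal{Z}_N(f,\sqrt{N})}\qquad\text{for }|V_k|^2<N,
\]
and $0$ otherwise. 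Because $b_{N-k}\to\infty$ while the shift $(N-|V_k|^2)-(N-k)=k-|V_k|^2$ stays bounded, one has $g_\alpha\!\bigl(\tfrac{k-|V_k|^2}{b_{N-k}}\bigr)\to g_\alpha(0)$, so the stable-density factors cancel in the ratio $\mathcal{Z}_{N-k}/\mathcal{Z}_N$ exactly as the Gaussian factors do in \cite{CCRLV}; together with $b_{N-k}/b_N\to1$ and the usual Stirling asymptotics of $\rho_{N,k}$ and of $|\mathbb{S}^{m}|$, the powers $(1-|V_k|^2/N)^{(N-k-2)/2}$ cancel as well, leaving both the pointwise limit $\Pi_k(F_N)(V_k)\to f^{\otimes k}(V_k)$ and a uniform domination $\Pi_k(F_N)\le C\,f^{\otimes k}$. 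Dominated convergence (or Scheff\'e's lemma) then gives $\Pi_k(F_N)\to f^{\otimes k}$ in $L^1$, hence in the weak topology, which is $f$-chaoticity.

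\textbf{Step 3: entropic chaoticity.} Decompose, using symmetry,
\[
\frac{H_N(F_N)}{N}=\int_{\mathbb{R}}\log f(v)\,\Pi_1(F_N)(v)\,dv-\frac{1}{N}\log\mathcal{Z}_N(f,\sqrt{N}).
\]
For the first term, Step~2 gives $\Pi_1(F_N)\to f$ in $L^1$ with $\Pi_1(F_N)\le Cf$, while $f\in L^p$ together with $\int v^{2\alpha}f<\infty$ yields $\int f|\log f|<\infty$ and domination of $(\log f)^{\pm}\Pi_1(F_N)$ by $C(\log f)^{\pm}f\in L^1$; hence $\int\log f\,\Pi_1(F_N)\,dv\to\int f\log f\,dv$. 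For the second term, evaluating Step~1 at $u=N$ gives
\[
\log\mathcal{Z}_N(f,\sqrt{N})=\log\bigl(2g_\alpha(0)+o(1)\bigr)-\tfrac{N-2}{2}\log N-\log|\mathbb{S}^{N-1}|-\log b_N ,
\]
and after dividing by $N$ both the unknown constant $g_\alpha(0)$ and $\log b_N=\log A_\alpha+\tfrac1\alpha\log N$ drop out, while Stirling's formula applied to $\log|\mathbb{S}^{N-1}|=\log2+\tfrac N2\log\pi-\log\Gamma(N/2)$ gives $-\tfrac1N\log\mathcal{Z}_N(f,\sqrt{N})\to\tfrac12\log(2\pi)+\tfrac12$. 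Combining the two terms and using $\int v^2 f=1$,
\[
\lim_{N\to\infty}\frac{H_N(F_N)}{N}=\int_{\mathbb{R}} f\log f\,dv+\tfrac12\int_{\mathbb{R}} v^2 f\,dv+\tfrac12\log(2\pi)=\int_{\mathbb{R}}f(v)\log\frac{f(v)}{\gamma(v)}\,dv=H(f\,|\,\gamma),
\]
which is $f$-entropic chaoticity and completes the proof of Theorem~\ref{thm: chaoticity and entropic chaoticity general}. The point worth stressing is that, although $g_\alpha$ is not known explicitly, only its positivity at $0$ enters the chaoticity argument and nothing at all beyond boundedness enters the entropy limit.
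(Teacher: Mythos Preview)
Your proposal is correct and follows essentially the same route as the paper: a local L\'evy central limit theorem for $h^{\ast N}$ with $h$ the density of $V^2$ (the paper's Theorem~\ref{thm: asyptotic behaviour for h conv N and normalisation function}), plugged into the exact $k$-marginal formula to get pointwise convergence and a uniform bound $\Pi_k(F_N)\le C f^{\otimes k}$, followed by the entropy decomposition into $\int\Pi_1(F_N)\log f$ and $-\tfrac1N\log\mathcal{Z}_N(f,\sqrt N)$, handled by dominated convergence and Stirling respectively. One small slip: the hypothesis $\nu_f(x)\sim C_Sx^{2-\alpha}$ does \emph{not} guarantee $\int|v|^{2\alpha}f<\infty$ (this is exactly the borderline), but you only need some positive moment to control the negative part of $f\log f$, and $\int v^2 f=1$ already suffices.
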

In particular, one has that 
\begin{theorem}\label{thm: chaoticity and entropic chaoticity with growth conditions on f}
Let $f$ be a probability density such that $f\in L^{p}$ for some $p>1$ and $\int x^2 f(x)dx=1$. Assume in addition that 
\begin{equation}\label{eq: growth conditions on f}
f(x)\underset{x\rightarrow\infty}{\sim} \frac{D}{|x|^{1+2\alpha}},
\end{equation} 
for some $1<\alpha<2$ and $D>0$. Then the family
\begin{equation}\nonumber
F_N=\frac{f^{\otimes N}}{\mathcal{Z}_N\pa{f,\sqrt{N}}}
\end{equation} 
is $f-$chaotic. Moreover, it is $f-$entropically chaotic.
\end{theorem}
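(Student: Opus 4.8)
The plan is to obtain Theorem~\ref{thm: chaoticity and entropic chaoticity with growth conditions on f} as a direct consequence of Theorem~\ref{thm: chaoticity and entropic chaoticity general}: the only thing that needs to be checked is that the pointwise decay hypothesis \eqref{eq: growth conditions on f} forces the truncated fourth moment
\[
\nu_f(x)=\int_{-\sqrt{x}}^{\sqrt{x}}y^4 f(y)\,dy
\]
to be regularly varying of the correct index, i.e.\ $\nu_f(x)\underset{x\rightarrow\infty}{\sim}C_{S}\, x^{2-\alpha}$ for a suitable $C_{S}>0$; all the analytic work — the local L\'evy central limit theorem, the approximation of $\mathcal{Z}_N\pa{f,\sqrt N}$, and the passage to the limit in the entropy — is already packaged inside Theorem~\ref{thm: chaoticity and entropic chaoticity general}.

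First I would record the elementary asymptotics of the integrand. Reading \eqref{eq: growth conditions on f} as a statement about $\abs{x}\to\infty$ (so that $f$ need not be symmetric), we get $y^4 f(y)\underset{\abs{y}\rightarrow\infty}{\sim}D\abs{y}^{\,3-2\alpha}$. Since $1<\alpha<2$ we have $-1<3-2\alpha<1$, hence $\int_1^{\infty}y^{3-2\alpha}\,dy=+\infty$, and the standard fact that asymptotically equivalent positive functions with divergent integral have asymptotically equivalent primitives gives
\[
\int_{1\le \abs{y}\le \sqrt{x}}y^4 f(y)\,dy\;\underset{x\rightarrow\infty}{\sim}\;2D\int_1^{\sqrt{x}}y^{3-2\alpha}\,dy=\frac{D}{2-\alpha}\pa{x^{2-\alpha}-1}.
\]
Adding the contribution of $\br{\abs{y}\le 1}$, which is a fixed finite constant and hence negligible against $x^{2-\alpha}\to\infty$, we conclude $\nu_f(x)\underset{x\rightarrow\infty}{\sim}C_{S}\, x^{2-\alpha}$ with $C_{S}=\dfrac{D}{2-\alpha}>0$.

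It then remains only to note that the other hypotheses of Theorem~\ref{thm: chaoticity and entropic chaoticity general} hold. The condition $f\in L^{p}$ for some $p>1$ and $\int x^2 f(x)\,dx=1$ are assumed outright; moreover the tail \eqref{eq: growth conditions on f} is consistent with a finite second moment, since $x^2 f(x)\underset{\abs{x}\rightarrow\infty}{\sim}D\abs{x}^{-(2\alpha-1)}$ with $2\alpha-1>1$, while it forces the fourth moment to diverge, $x^4 f(x)\underset{\abs{x}\rightarrow\infty}{\sim}D\abs{x}^{-(2\alpha-3)}$ with $2\alpha-3<1$ — which is exactly the \emph{without high moments} regime the title refers to. Applying Theorem~\ref{thm: chaoticity and entropic chaoticity general} with this $\nu_f$ then yields that $F_N=f^{\otimes N}/\mathcal{Z}_N\pa{f,\sqrt N}$ is $f$-chaotic and $f$-entropically chaotic, which is the claim.

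The proof has no genuine obstacle of its own: the only points requiring a little care are the interpretation of \eqref{eq: growth conditions on f} as a two-sided tail bound when $f$ is not symmetric, and checking that the error terms implicit in ``$\sim$'' in $y^4 f(y)\sim D\abs{y}^{\,3-2\alpha}$ integrate to something of strictly lower order than $x^{2-\alpha}$, which again follows because $\int^{\sqrt{x}}\abs{y}^{\,3-2\alpha}\,dy$ already diverges. All the real difficulty has been displaced into Theorem~\ref{thm: chaoticity and entropic chaoticity general}.
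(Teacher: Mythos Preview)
Your proposal is correct and follows exactly the same route as the paper: verify that the tail condition \eqref{eq: growth conditions on f} implies $\nu_f(x)\underset{x\rightarrow\infty}{\sim}\dfrac{D}{2-\alpha}\,x^{2-\alpha}$, and then invoke Theorem~\ref{thm: chaoticity and entropic chaoticity general}. The paper's own proof is in fact terser than yours --- it simply asserts the asymptotic for $\nu_f$ as ``easy to see'' --- so your computation spelling out the integral $\int_1^{\sqrt{x}}y^{3-2\alpha}\,dy$ and the negligibility of the bounded contribution near the origin is, if anything, more complete.
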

In addition to the above, the probability measure $\nu_N=F_Nd\sigma^N$ plays an important role on Kac's sphere. This is expressed in the following distorted lower semi continuity property:
\begin{theorem}\label{thm: lower semi continuity}
Let $f$ satisfy the conditions of Theorem \ref{thm: chaoticity and entropic chaoticity general} and let $\mu_N$ be a symmetric probability measure on Kac's sphere such that for some $k\in\mathbb{N}$
\begin{equation}\label{eq: weak convergence per k}
\Pi_{k}(\mu_N) \underset{N\rightarrow\infty}{\rightharpoonup}\mu_{k},
\end{equation}
where $\mu_{k}$ is a probability measure on $\mathbb{R}^{k}$. Then, if we denote by $F_N=\frac{f^{\otimes N}}{\mathcal{Z}_N (f,\sqrt{N})}$ and $\nu_N=F_Nd\sigma^N$ we have that
\begin{enumerate}[(i)]
\item $\Pi_{1}(\mu_N) \underset{N\rightarrow\infty}{\rightharpoonup}\Pi_1\pa{\mu_{k}}=\mu$ and 
\begin{equation}\label{eq: l.s.c. for k=1}
H(\mu | f) \leq \liminf_{N\rightarrow\infty}\frac{H_N(\mu_N|\nu_N)}{N},
\end{equation}
where $H(\mu | f)$ is the relative entropy between $\mu$ and the measure $f(v)dv$.
\item For any $\delta>0$ we have that
\begin{equation}\label{eq: semi lsc for k>1}
\begin{gathered}
\liminf_{N\rightarrow\infty}\frac{H(\mu_N|\nu_N)}{N}\geq \frac{H(\mu_k|f^{\otimes k})}{k}-\limsup_{N\rightarrow\infty}\int_{\mathbb{R}} \log \left(f(v)+\delta \right)d\Pi_1 \left(\mu_N \right)(v) \\
+ \int \log\left(f(v) \right)d\mu(v)  -\frac{1-\int |v|^2 d\mu(v)}{2}.
\end{gathered}
\end{equation}
\end{enumerate}
\end{theorem}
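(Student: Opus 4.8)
The plan is to derive both parts from the Gibbs (Donsker--Varadhan) variational characterisation of the relative entropy, combined with the large--$N$ asymptotics of the normalisation function $\mathcal Z_N(f,\sqrt N)$ supplied by the local L\'evy central limit theorem behind Theorem \ref{thm: chaoticity and entropic chaoticity general}. A harmless reduction first: we may assume $\mu_N\ll\nu_N$ for all $N$ (otherwise $H_N(\mu_N|\nu_N)=+\infty$ and there is nothing to prove), which forces $\mu_N\ll d\sigma^N$. The convergence $\Pi_1(\mu_N)\rightharpoonup\Pi_1(\mu_k)=:\mu$ is then immediate, since for $\varphi\in C_b(\mathbb R)$ the map $(v_1,\dots,v_k)\mapsto\varphi(v_1)$ lies in $C_b(\mathbb R^k)$, so $\int\varphi\,d\Pi_1(\mu_N)=\int\varphi(v_1)\,d\Pi_k(\mu_N)\to\int\varphi(v_1)\,d\mu_k$. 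Two facts about normalisation functions will be used. (a) The local L\'evy CLT underlying Theorem \ref{thm: chaoticity and entropic chaoticity general} gives, exactly as Theorem \ref{thm: normalisation function fixed f approximation} does when $\int v^4f<\infty$ (and consistently with the entropic chaos of $F_N$ asserted there), that $\tfrac1N\log\mathcal Z_N(f,\sqrt N)\to-\tfrac12\pa{1+\log(2\pi)}$. (b) For any measurable $g\ge 0$ with $0<\int g<\infty$ and $\int v^2 g<\infty$, the factorisation $\mathcal Z_N(g,\sqrt N)=\pa{\int g}^N\,\mathcal Z_N\pa{g/\!\!\int\! g,\,\sqrt N}$ together with a crude bound on the density of $\sum_{i=1}^N V_i^2$ at a single point by its (at most polynomially growing) supremum yields $\limsup_N\tfrac1N\log\mathcal Z_N(g,\sqrt N)\le\log\pa{\int g}-\tfrac12\pa{1+\log(2\pi)}$.

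For $(i)$ I would use single--particle test functions. Fix $\psi\in C_b(\mathbb R)$; the function $\Psi_N(v_1,\dots,v_N)=\sum_{i=1}^N\psi(v_i)$ is bounded for each fixed $N$, and since $\mu_N$ is symmetric and $e^{\Psi_N}d\nu_N=\mathcal Z_N(f,\sqrt N)^{-1}(e^{\psi}f)^{\otimes N}d\sigma^N$, the Gibbs inequality on $\mathbb S^{N-1}(\sqrt N)$ gives
\[
\frac{H_N(\mu_N|\nu_N)}{N}\ \ge\ \int_{\mathbb R}\psi\,d\Pi_1(\mu_N)\ -\ \frac1N\log\mathcal Z_N(e^{\psi}f,\sqrt N)\ +\ \frac1N\log\mathcal Z_N(f,\sqrt N).
\]
Letting $N\to\infty$, using $\Pi_1(\mu_N)\rightharpoonup\mu$, fact (a), and fact (b) applied to $g=e^{\psi}f$ (for which $\int e^{\psi}f<\infty$ and $\int v^2 e^{\psi}f<\infty$ because $\psi$ is bounded and $f$ has a second moment), we obtain $\liminf_N\tfrac1N H_N(\mu_N|\nu_N)\ge\int\psi\,d\mu-\log\int e^{\psi}f$. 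Taking the supremum over $\psi\in C_b(\mathbb R)$ and recalling that this supremum equals $H(\mu|f)$ (the Gibbs variational formula for the relative entropy with respect to the probability measure $f(v)\,dv$) proves $(i)$.

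For $(ii)$ I would peel off the uniform reference. Since $d\nu_N=\mathcal Z_N(f,\sqrt N)^{-1}f^{\otimes N}d\sigma^N$, the chain rule and symmetry of $\mu_N$ give, for every $\delta>0$ and using $\log f\le\log(f+\delta)$,
\[
\frac{H_N(\mu_N|\nu_N)}{N}\ \ge\ \frac{H_N(\mu_N|d\sigma^N)}{N}\ -\ \int_{\mathbb R}\log(f+\delta)\,d\Pi_1(\mu_N)\ +\ \frac1N\log\mathcal Z_N(f,\sqrt N),
\]
where the regularisation keeps the middle integral bounded below (by $\log\delta$), hence its negative bounded above, which avoids any $\infty-\infty$ indeterminacy when passing to the $\liminf$. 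For the first term I invoke a superadditivity/lower--semicontinuity estimate for the entropy on Kac's sphere (see \cite{CCRLV}), $\tfrac1N H_N(\mu_N|d\sigma^N)\ge\tfrac1k H\pa{\Pi_k(\mu_N)|\Pi_k(d\sigma^N)}$, and then, since $\Pi_k(d\sigma^N)\rightharpoonup\gamma^{\otimes k}$ and $\Pi_k(\mu_N)\rightharpoonup\mu_k$, the joint lower semicontinuity of the relative entropy to get $\liminf_N\tfrac1N H_N(\mu_N|d\sigma^N)\ge\tfrac1k H(\mu_k|\gamma^{\otimes k})$. Finally, the pointwise identity $-\log\gamma(v)=\tfrac12\log(2\pi)+\tfrac12 v^2$, together with the symmetry of $\mu_k$ and $\Pi_1(\mu_k)=\mu$, yields
\[
\frac1k H(\mu_k|\gamma^{\otimes k})\ =\ \frac1k H(\mu_k|f^{\otimes k})\ +\ \int_{\mathbb R}\log f\,d\mu\ +\ \tfrac12\log(2\pi)\ +\ \tfrac12\int_{\mathbb R}v^2\,d\mu,
\]
and substituting this and fact (a) into the previous inequality and rearranging produces exactly the bound asserted in $(ii)$.

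The step I expect to be the real obstacle is fact (a) in the present low--moment setting: once $\int v^4 f=\infty$ the classical local CLT behind Theorems \ref{thm: normalisation function fixed f approximation} and \ref{thm: einav1} collapses, and the sharp asymptotics of $\mathcal Z_N(f,\sqrt N)$ must be extracted from the new local L\'evy CLT with uniform control of the error term in the energy variable — this is the technical heart announced in the abstract. A subtler conceptual point is why $(i)$ and $(ii)$ are genuinely distinct: the decomposition through $d\sigma^N$ used for $(ii)$ cannot recover $(i)$, because converting the Gaussian reference into $f$ leaves a residual energy defect $-\tfrac12\pa{1-\int v^2\,d\mu}$ that is strictly negative whenever the first marginals conserve mass but lose second moment in the limit; the single--particle Gibbs argument of $(i)$ sidesteps this precisely because $e^\psi f$ typically has the "wrong" energy $\int v^2 e^\psi f/\!\!\int\! e^\psi f\neq1$, so the sphere $\{\sum v_i^2=N\}$ sits in the large--deviation regime of $e^\psi f$, and there only the upper estimate in fact (b) is needed, the large--deviation correction acting in the favourable direction.
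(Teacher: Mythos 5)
Your proposal is correct and follows essentially the same strategy as the paper: part $(i)$ via the Donsker--Varadhan duality formula (\ref{eq: duality formula for relative entropy}) with tensorised one-particle test functions, and part $(ii)$ via the decomposition $H_N(\mu_N|\nu_N)=H_N(\mu_N|\sigma^N)-N\int\log f\,d\Pi_1(\mu_N)+\log\mathcal Z_N(f,\sqrt N)$, the Gaussian-reference lower semicontinuity of Theorem \ref{thm: CCRLV theorem}, and the change-of-reference identity between $H(\mu_k|\gamma^{\otimes k})$ and $H(\mu_k|f^{\otimes k})$; your algebra there reproduces (\ref{eq: semi lsc for k>1}) exactly. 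The one genuine divergence is in part $(i)$: the paper controls $\mathcal Z_N(e^{\varphi}f/a,\sqrt N)$ by first truncating $\varphi_\epsilon$ to compact support (so that the tilted density inherits the tail condition $\nu\sim C x^{2-\alpha}$) and then applying the local L\'evy CLT of Theorem \ref{thm: asyptotic behaviour for h conv N and normalisation function} to it, using only that $\gamma_{\sigma_1,\alpha,1}$ is bounded; your ``fact (b)'' replaces this by the elementary bound $h^{\ast N}(N)\le\Norm{h^{\ast m}}_\infty$ for a fixed $m$ with $h^{\ast m}\in L^\infty$ (available since $h\in L^{p'}$, $p'>1$, by Remark \ref{rem: properties of h}), which works for any bounded $\psi$ and so dispenses with both the truncation step and the re-verification of the CLT hypotheses for the tilted density. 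This is a legitimate and arguably cleaner route, since — as in the paper — only an upper bound of the right exponential order on the tilted normalisation function is ever needed; the price is that you must still invoke the sharp limit $\frac1N\log\mathcal Z_N(f,\sqrt N)\to-\frac12(1+\log 2\pi)$ for $f$ itself, which is exactly where the new local L\'evy CLT enters, as you correctly identify.
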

Theorem \ref{thm: lower semi continuity} is the key to proving the following stability property of entropic chaoticity:
\begin{theorem}\label{thm: stability property}
Let $f$ satisfy the conditions of Theorem \ref{thm: chaoticity and entropic chaoticity general} and assume in addition that $f\in L^\infty (\mathbb{R})$. Then, if 
\begin{equation}\label{eq: entropic closeness}
\lim_{N\rightarrow\infty}\frac{H(\mu_N | \nu_N)}{N}=0,
\end{equation}
where $\nu_N$ was defined in Theorem \ref{thm: lower semi continuity}, $\mu_N$ is $f-$chaotic. Moreover, $\mu_N$ is $f-$entropically chaotic.
\end{theorem}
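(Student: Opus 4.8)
The plan is to deduce both conclusions from Theorem \ref{thm: lower semi continuity}, using $\nu_N=F_Nd\sigma^N$ as the reference family; recall that by Theorem \ref{thm: chaoticity and entropic chaoticity general} the $\nu_N$ are themselves $f$-chaotic and $f$-entropically chaotic. Since each (symmetric) $\mu_N$ is carried by $\mathbb{S}^{N-1}(\sqrt N)$, one has $\int_{\mathbb{R}}v_1^2\,d\Pi_1(\mu_N)=\frac1N\int\sum_i v_i^2\,d\mu_N=1$, and likewise every coordinate of $\Pi_k(\mu_N)$ has second moment $1$; hence $\br{\Pi_k(\mu_N)}_N$ is tight for each $k$. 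Extract a subsequence along which $\Pi_1(\mu_N)\rightharpoonup\mu$. Theorem \ref{thm: lower semi continuity}(i) (with $k=1$) gives $H(\mu|f)\le\liminf_N\frac1N H(\mu_N|\nu_N)=0$, so $\mu=f$; since every subsequence forces the same limit, $\Pi_1(\mu_N)\rightharpoonup f$ for the whole sequence.

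Next fix $k\ge2$ and pass to a subsequence with $\Pi_k(\mu_N)\rightharpoonup\mu_k$; then $\mu:=\Pi_1(\mu_k)=f$ by the previous step, so $\int|v|^2\,d\mu=1$ and the last term in (\ref{eq: semi lsc for k>1}) vanishes. Plugging $\liminf_N\frac1N H(\mu_N|\nu_N)=0$ into Theorem \ref{thm: lower semi continuity}(ii) gives, for every $\delta>0$,
\[
\frac{H(\mu_k|f^{\otimes k})}{k}\le\limsup_{N\to\infty}\int_{\mathbb{R}}\log\pa{f(v)+\delta}\,d\Pi_1(\mu_N)(v)-\int_{\mathbb{R}}f\log f\,dv .
\]
The main point --- and the step I expect to be the real obstacle --- is to pass to the limit in $\int_{\mathbb{R}}\log(f+\delta)\,d\Pi_1(\mu_N)$. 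Mere weak convergence $\Pi_1(\mu_N)\rightharpoonup f\,dv$ does not suffice because $\log(f+\delta)$ need not be continuous; here one uses that $f\in L^\infty$, so that $\log(f+\delta)$ is bounded, and that $H(f|\gamma)<\infty$ forces $f\log f\in L^1(\mathbb{R})$, so that $\log(f+\delta)\in L^1(f\,dv)$. By the Vitali--Carath\'eodory theorem $\log(f+\delta)$ can be sandwiched between an upper semicontinuous majorant and a lower semicontinuous minorant that are arbitrarily $L^1(f\,dv)$-close to it (and, using $\log\delta\le\log(f+\delta)\le\log(\Norm{f}_\infty+\delta)$, one may take both of them bounded); approximating each monotonically by bounded continuous functions and using weak convergence yields $\int_{\mathbb{R}}\log(f+\delta)\,d\Pi_1(\mu_N)\to\int_{\mathbb{R}}\log(f+\delta)f\,dv$. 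Letting $\delta\downarrow0$ (monotone convergence, with dominating function $\log(f+1)\in L^1(f\,dv)$) makes the right-hand side above tend to $0$, so $H(\mu_k|f^{\otimes k})\le0$, i.e.\ $\mu_k=f^{\otimes k}$. Hence $\Pi_k(\mu_N)\rightharpoonup f^{\otimes k}$ for all $k$ and $\br{\mu_N}$ is $f$-chaotic.

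It remains to upgrade this to entropic chaoticity. The lower bound $\liminf_N\frac1N H_N(\mu_N|d\sigma^N)\ge H(f|\gamma)$ holds for any $f$-chaotic family by the standard lower semicontinuity of the entropy per particle (see \cite{CCRLV}). For the matching upper bound, the identity $\log F_N=\sum_i\log f(v_i)-\log\mathcal{Z}_N(f,\sqrt N)$ together with symmetry gives
\[
\frac{H_N(\mu_N|d\sigma^N)}{N}=\frac{H_N(\nu_N|d\sigma^N)}{N}+\frac{H(\mu_N|\nu_N)}{N}+\int_{\mathbb{R}}\log f\,d\Pi_1(\mu_N)-\int_{\mathbb{R}}\log f\,d\Pi_1(\nu_N) .
\]
By Theorem \ref{thm: chaoticity and entropic chaoticity general} the first term converges to $H(f|\gamma)$, and the asymptotics of $\mathcal{Z}_N$ established in its proof give $\int_{\mathbb{R}}\log f\,d\Pi_1(\nu_N)\to\int_{\mathbb{R}}f\log f\,dv$; the second term tends to $0$ by hypothesis; and the ``$\limsup$'' half of the estimate from the previous paragraph (followed by $\delta\downarrow0$) gives $\limsup_N\int_{\mathbb{R}}\log f\,d\Pi_1(\mu_N)\le\int_{\mathbb{R}}f\log f\,dv$. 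Summing, $\limsup_N\frac1N H_N(\mu_N|d\sigma^N)\le H(f|\gamma)$, which together with the lower bound yields $\lim_N\frac1N H_N(\mu_N|d\sigma^N)=H(f|\gamma)$: the family $\br{\mu_N}$ is $f$-entropically chaotic.

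Everything apart from the Vitali--Carath\'eodory passage to the limit is soft: Prokhorov compactness, the definition of relative entropy, and the inputs of Theorems \ref{thm: lower semi continuity} and \ref{thm: chaoticity and entropic chaoticity general}. The delicate point is exactly the control of $\int_{\mathbb{R}}\log(f+\delta)\,d\Pi_1(\mu_N)$ against a possibly discontinuous, though bounded, integrand --- and it is precisely there that the extra hypothesis $f\in L^\infty$ (with $H(f|\gamma)<\infty$) is needed --- while the regularization $\delta>0$ is what keeps that integrand bounded, so that the $\delta\downarrow0$ limit has to be treated separately and only at the level of $\limsup$.
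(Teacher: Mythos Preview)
Your overall strategy matches the paper's, but the step you yourself flag as ``the real obstacle'' is where the argument breaks. The Vitali--Carath\'eodory theorem gives an \emph{upper semicontinuous minorant} and a \emph{lower semicontinuous majorant} of an $L^1$ function --- the opposite of what you write. With the correct version the Portmanteau bounds point the wrong way: for an l.s.c.\ majorant $v\ge\log(f+\delta)$ one only has $\liminf_N\int v\,d\Pi_1(\mu_N)\ge\int v f$, which yields no upper bound on $\limsup_N\int\log(f+\delta)\,d\Pi_1(\mu_N)$; similarly the u.s.c.\ minorant gives no lower bound on the $\liminf$. The version you invoke (u.s.c.\ majorant and l.s.c.\ minorant, close in $L^1$) is false in general: if $E\subset[0,1]$ is a fat Cantor set and $g=\chi_E$, any l.s.c.\ $u\le g$ has $\{u>0\}$ open and contained in $E$, hence empty, so $\int uf\le0$ while $\int gf>0$. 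Weak convergence $\Pi_1(\mu_N)\rightharpoonup f\,dv$ alone does not allow you to test against the bounded but merely measurable function $\log(f+\delta)$, and the same gap recurs in your $\limsup$ bound for the entropic part.

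The paper closes this gap by upgrading weak convergence of the marginals to weak-$L^1$ convergence of their \emph{densities}. From $H(\mu_N|\nu_N)=H(\mu_N|\sigma^N)-N\int\log f\,d\Pi_1(\mu_N)+\log\mathcal{Z}_N(f,\sqrt N)$ and $f\in L^\infty$ one first obtains a uniform bound on $H(\mu_N|\sigma^N)/N$. The marginal entropy inequality of \cite{BCM}, $H\pa{\Pi_k(\mu_N)\,|\,\Pi_k(\sigma^N)}\le 2k\,H(\mu_N|\sigma^N)/N$, together with an explicit comparison of $\Pi_k(\sigma^N)$ with $\gamma^{\otimes k}$, then gives a uniform bound on $H\pa{\Pi_k(\mu_N)\,|\,\gamma^{\otimes k}}$. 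By de la Vall\'ee--Poussin this means the densities of $\Pi_k(\mu_N)$ are uniformly integrable, so Dunford--Pettis yields relative compactness in weak $L^1(\mathbb{R}^k)$. For $k=1$ the weak-$L^1$ limit must be $f$ (it is already the weak-measure limit), hence $\int\phi\,d\Pi_1(\mu_N)\to\int\phi f$ for every $\phi\in L^\infty(\mathbb{R})$, in particular for $\phi=\log(f+\delta)$. This is precisely the missing input both for applying Theorem~\ref{thm: lower semi continuity}(ii) and for the $\limsup$ estimate in the entropic chaoticity step.
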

A different approach to the stability problem involves the relative Fisher information functional on Kac's sphere, $I_N$:
\begin{theorem}\label{thm: stability property via fisher}
Let $\br{\mu_N}_{N\in\mathbb{N}}$ be a family of symmetric probability measures on Kac's sphere that is $f-$chaotic. Assume that there exists $C_S>0$ and $1<\alpha<2$ such that
\begin{equation}\label{eq: uniform nu_{mu_N}}
\int_{-\sqrt{x}}^{\sqrt{x}}v_1^4 d\Pi_1(\mu_N)(v_1) \underset{x\rightarrow\infty}{\sim}C_S x^{2-\alpha}
\end{equation}
uniformly in $N$, and that 
\begin{equation}\label{eq: condition of stability with fisher II}
\begin{gathered}
\frac{H_N(\mu_N | \sigma^N)}{N} \leq C, \quad \frac{I_N(\mu_N | \sigma^N)}{N}\leq C 
\end{gathered}
\end{equation}
for all $N$ and some $2<k<4$. Then $\mu_N$ is $f-$entropically chaotic.
\end{theorem}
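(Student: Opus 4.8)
The plan is to reduce, via Theorem \ref{thm: stability property}, to proving $\tfrac1N H(\mu_N\,|\,\nu_N)\to0$ for the reference family $\nu_N=F_Nd\sigma^N$ with $F_N=f^{\otimes N}/\mathcal Z_N(f,\sqrt N)$, and to use the Fisher information bound both to make sense of this reference and to close the estimate. First I would exploit $\tfrac1N I_N(\mu_N\,|\,\sigma^N)\le C$: the superadditivity of the Fisher information on Kac's sphere, relating $I_N(\mu_N\,|\,\sigma^N)$ to the Fisher information of the one-particle marginal (as in \cite{CCRLV,CGL}), yields $\sup_N I(g_N\,|\,\gamma)<\infty$ with $g_N:=\Pi_1(\mu_N)$. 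Since the sphere constraint forces $\int v_1^2\,d\mu_N=1$, the identity $I(g_N)=I(g_N\,|\,\gamma)+1$ bounds $\sqrt{g_N}$ in $H^1(\mathbb R)$; hence $\|g_N\|_{L^\infty}\le C$, $g_N$ is continuous, $I(f\,|\,\gamma)<\infty$, and — using $f$-chaoticity, the Markov bound $\int_{|v|>R}\,dg_N\le R^{-2}$ and Rellich compactness — $g_N\to f$ in $L^1(\mathbb R)$ with $f\in L^\infty$. A dyadic decomposition, using the uniform bound $\int_{-\sqrt x}^{\sqrt x}v^4\,dg_N\lesssim x^{2-\alpha}$ supplied by \eqref{eq: uniform nu_{mu_N}} together with $\alpha>1$, gives $\int_{|v|>R}v^2\,dg_N\lesssim R^{2-2\alpha}\to0$ uniformly in $N$, so $\{v^2g_N\}$ is uniformly integrable and $\int v^2\,df=1$, $\nu_f(x)\sim C_Sx^{2-\alpha}$. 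Thus $f$ meets all hypotheses of Theorems \ref{thm: chaoticity and entropic chaoticity general} and \ref{thm: stability property}.

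Next I would write the exact chain rule
\[
H_N(\mu_N\,|\,\sigma^N)=H(\mu_N\,|\,\nu_N)+N\!\int\!\log f\,dg_N-\log\mathcal Z_N(f,\sqrt N).
\]
The local L\'evy CLT underlying Theorem \ref{thm: chaoticity and entropic chaoticity general} gives $\tfrac1N\log\mathcal Z_N(f,\sqrt N)\to-\tfrac12(1+\log2\pi)$ (the stable normalising factors contribute only $o(N)$ to the logarithm), and $\int\log f\,dg_N\to\int f\log f$ because $g_N\to f$ in $L^1$, $\log f$ is continuous, $(\log f)^-(v)\lesssim1+v^2$ by the prescribed tail, and $\{v^2g_N\}$ is uniformly integrable. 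Since $H(\mu_N\,|\,\nu_N)\ge0$, this already gives the lower bound $\liminf_N\tfrac1N H_N(\mu_N\,|\,\sigma^N)\ge H(f\,|\,\gamma)$; equivalently $\liminf_N\tfrac1N H(\mu_N\,|\,\nu_N)\ge0$, which is automatic. So everything hinges on the matching upper bound $\limsup_N\tfrac1N H_N(\mu_N\,|\,\sigma^N)\le H(f\,|\,\gamma)$, i.e. on $\tfrac1N H(\mu_N\,|\,\nu_N)\to0$.

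For that I would follow the strategy used in \cite{CCRLV} to derive entropic chaos from a Fisher information bound: iterate the disintegration identity
\[
H_N(\mu_N\,|\,\sigma^N)=H_k(\Pi_k\mu_N\,|\,\Pi_k\sigma^N)+\int H_{N-k}\!\big(\mu_N^{\vec v}\,\big|\,\sigma^N_{\vec v}\big)\,d\Pi_k\mu_N(\vec v),
\]
where $\mu_N^{\vec v}$ is the conditional law on $\mathbb S^{N-k-1}\!\big(\sqrt{N-|\vec v|^2}\big)$ and $\sigma^N_{\vec v}$ the corresponding uniform measure; the Fisher bound is used to obtain $H_k(\Pi_k\mu_N\,|\,\Pi_k\sigma^N)\to k\,H(f\,|\,\gamma)$ and to carry enough regularity over to the conditioned measures that the local CLT applies to them uniformly in $N$, after which letting $k\to\infty$ (following $N\to\infty$) and comparing with the normalisation asymptotics pins the $O(N)$ part of the second term at $(N-k)H(f\,|\,\gamma)$. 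An equivalent route is to estimate $H(\mu_N\,|\,\nu_N)$ by an HWI-type inequality on Kac's sphere relative to $\nu_N$, whose Bakry--\'Emery tensor $\mathrm{Ric}_{\mathbb S^{N-1}(\sqrt N)}+\mathrm{Hess}(-\log F_N)$ is bounded below uniformly in $N$ when $f$ is mildly regular (since $\mathrm{Ric}=\tfrac{N-2}{N}\to1$, $(\log f)''$ is bounded, and $|(\log f)'(v)|^2\lesssim(1+v^2)^{-1}$ from the tail controls $\tfrac1N I(\mu_N\,|\,\nu_N)$), thereby reducing matters to $\tfrac1N W_2(\mu_N,\nu_N)^2\to0$, obtained by coupling the one-particle laws optimally, tensorising, and projecting the two products onto their spheres at the typical radius.

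The main obstacle, on either route, is the uniform-in-$N$ control of the conditioned measures $\mu_N^{\vec v}$ (respectively, of the sphere projection) in this low-moment regime: with $f$ having moments only of order $2\alpha$, $1<\alpha<2$, the fourth moment of $g_N$ is unbounded, $\sum_i v_i^2$ concentrates around $N$ only polynomially fast, and the remainders in the local CLT decay slowly; these errors must be absorbed using the uniform tail hypothesis and a dyadic truncation, exactly as in the proof of Theorem \ref{thm: chaoticity and entropic chaoticity general}.
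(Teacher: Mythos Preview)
Your preliminary work is close to the paper's: you correctly argue that $f\in L^\infty$ via the Fisher bound and Sobolev embedding, that $\int v^2 f=1$ and $\nu_f(x)\sim C_Sx^{2-\alpha}$ via the uniform tail condition, and that a moment of order $1+\alpha$ is uniformly bounded. The chain-rule identity you write is also the one the paper uses (in the proof of Theorem~\ref{thm: lower semi continuity}). Where you diverge is exactly at the crucial step, and both of your suggested routes have real problems.

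Your HWI route takes $\nu_N$ as reference and needs a uniform lower bound on $\mathrm{Ric}_{\mathbb S^{N-1}(\sqrt N)}+\mathrm{Hess}(-\log F_N)$. You assert that ``$(\log f)''$ is bounded'' and ``$|(\log f)'(v)|^2\lesssim(1+v^2)^{-1}$ from the tail'', but nothing in the hypotheses gives this: $f$ is only assumed to lie in some $L^p$ and to satisfy a truncated fourth-moment asymptotic; no pointwise or $C^2$ control on $\log f$ follows. So the Bakry--\'Emery bound for $\nu_N$ is unjustified, and this route does not close. Your disintegration route is honest about its own obstacle --- controlling the conditioned measures uniformly in $N$ in this low-moment regime --- and the paper does not attempt it.

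What the paper actually does is simpler and avoids both obstacles. It applies HWI on Kac's sphere \emph{with $\sigma^N$ as reference}, where the Ricci curvature is known and uniform, obtaining
\[
\Big|\tfrac{1}{N}H_N(\mu_N\,|\,\sigma^N)-\tfrac{1}{N}H_N(\nu_N\,|\,\sigma^N)\Big|
\;\le\; C\,\frac{W_2(\mu_N,\nu_N)}{\sqrt N},
\]
using the uniform bounds on $\tfrac{1}{N}I_N(\mu_N\,|\,\sigma^N)$ and $\tfrac{1}{N}I_N(\nu_N\,|\,\sigma^N)$ (the latter established from the local L\'evy CLT for $\mathcal Z_N$). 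The right-hand side is then controlled by the quantitative chaos estimate from \cite{HM},
\[
\frac{W_2(\kappa_N,f^{\otimes N})}{\sqrt N}\;\le\; C\,B_l^{1/l}\Big(W_1(\Pi_2\kappa_N,f^{\otimes 2})+N^{-p_1}\Big)^{\frac12-\frac1l},
\]
applied with $l=1+\alpha>2$ to both $\kappa_N=\mu_N$ and $\kappa_N=\nu_N$; the uniform $(1+\alpha)$-moment bound supplies $B_l$, and $f$-chaoticity of $\mu_N$ and of $\nu_N$ gives $W_1(\Pi_2\cdot,f^{\otimes 2})\to0$. Hence $\tfrac{1}{N}H_N(\mu_N\,|\,\sigma^N)-\tfrac{1}{N}H_N(\nu_N\,|\,\sigma^N)\to0$, and since $\nu_N$ is already known to be $f$-entropically chaotic, the conclusion follows directly --- without passing through Theorem~\ref{thm: stability property} at all. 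The two ingredients you are missing are precisely: HWI with respect to $\sigma^N$ rather than $\nu_N$, and the \cite{HM} bound converting chaoticity of two-marginals into smallness of $W_2/\sqrt N$.
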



The presented work is structured as follows: In Section \ref{sec: preliminaries} we will present some preliminaries to the work, including known results on the normalisation function, marginals of probability measures on Kac's sphere and stable $\alpha$ processes. Section \ref{sec: local central limit} will be focused on finding a local L\'evy Central Limit Theorem, to be used in Section \ref{sec: chaoticity and entropic chaoticity}, where we will prove Theorems \ref{thm: chaoticity and entropic chaoticity general} and \ref{thm: chaoticity and entropic chaoticity with growth conditions on f}. In Section \ref{sec: lsc and stability} we will discuss the lower semi continuity property of processes of our type (Theorem \ref{thm: lower semi continuity}) and prove the stability theorems, Theorems \ref{thm: stability property} and \ref{thm: stability property via fisher}. Section \ref{sec: final remarks} will see closing remarks for our work, while the Appendix will discuss a quantitative L\'evy type approximation theorem, and include some additional computation that would otherwise encumber the presentation of our paper.\\
\\
For more information about the Boltzmann equation, Kac's (and McKean's) model, the spectral gap and entropy-entropy production problems, as well as discussion about chaoticity and entropic chaoticity we refer the interested reader to \cite{CCL,CCL1,CCRLV,CGL,Carr,Einav1,Einav2,Einav3,HM,MM,Villani,VReview}.\\
\\
\textbf{Acknowledgement.} The authors would like to thank Cl\'ement Mouhot and St\'epahne Mischler for fruitful discussions, constant encouragements and support, as well as enlightening remarks about the manuscript. 

\section{Preliminaries.}\label{sec: preliminaries}
\subsection*{The Normalisation Function.} As discussed in the introduction, the normalisation function, $\mathcal{Z}_N\pa{f,\sqrt{r}}$, plays an important role in the proofs of chaoticity and entropic chaoticity of distribution families of the form 
\begin{equation}\nonumber
F_N=\frac{f^{\otimes N}}{\mathcal{Z}_N\pa{f,\sqrt{N}}}.
\end{equation}
In this short subsection we will give a probabilistic interpretation to it, as well as explain why it is well defined under simple conditions on $f$.\\
Before we begin, we'd like to make a small remark about notation convention: we frequently use the term 'distribution function' in this paper, by which we mean the Statistical Physics sense of the term, i.e. a probability density function in mathematical terms. In what follows, when we'll aim to be very precise and less confusing, we'll use the terms 'probability density function' and 'probability distribution function' to clarify certain conditions of theorems. 
\begin{lemma}\label{lem: interpretation of normalisation function}
Let $f$ be a probability density function for the real random variable $V$. Then
\begin{equation}\label{eq: interpretation of the normalisation function}
\mathcal{Z}_N\pa{f,\sqrt{r}}=\frac{2h^{\ast N}(r)}{\abs{\mathbb{S}^{N-1}}r^{\frac{N-2}{2}}}
\end{equation}
where $h$ be the associated probability density function for the real random variable $V^2$ and $h^{\ast N}$ is the $N-$th iterated convolution of $h$. \\
\end{lemma}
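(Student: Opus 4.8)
The plan is to read the identity off from the polar (coarea) disintegration of Lebesgue measure on $\mathbb{R}^N$ into concentric spheres, together with the change of variables $u=\rho^2$.

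First I would record the polar disintegration: for every non-negative measurable $G$ on $\mathbb{R}^N$,
\begin{equation}\nonumber
\int_{\mathbb{R}^N}G(v)\,dv=\abs{\mathbb{S}^{N-1}}\int_0^\infty \rho^{N-1}\pa{\int_{\mathbb{S}^{N-1}(\rho)}G\,d\sigma^N_\rho}d\rho,
\end{equation}
where $d\sigma^N_\rho$ is the uniform \emph{probability} measure on $\mathbb{S}^{N-1}(\rho)$ and $\abs{\mathbb{S}^{N-1}}\rho^{N-1}$ is the total surface area of that sphere. This is just writing the ambient measure in polar coordinates, and it is where the factor $\abs{\mathbb{S}^{N-1}}\rho^{N-1}$ will come from.

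Next I would let $V_1,\dots,V_N$ be independent copies of $V$, so that $(V_1,\dots,V_N)$ has joint density $f^{\otimes N}$ on $\mathbb{R}^N$, and set $W=V_1^2+\dots+V_N^2$, the squared Euclidean norm of that random vector. The idea is to compute the law of $W$ in two ways and compare. On the one hand, each $V_i^2$ has density $h$ — a one-line change of variables gives $h(u)=\frac{f(\sqrt u)+f(-\sqrt u)}{2\sqrt u}$ for $u>0$, and $\int_0^\infty h=\int_{\mathbb{R}}f=1$, so $h$ is a bona fide probability density — hence, by independence, $W$ has density $h^{\ast N}$. On the other hand, for any bounded continuous $\phi$,
\begin{equation}\nonumber
\mathbb{E}\br{\phi(W)}=\int_{\mathbb{R}^N}\phi\pa{\abs{v}^2}f^{\otimes N}(v)\,dv=\abs{\mathbb{S}^{N-1}}\int_0^\infty \rho^{N-1}\phi(\rho^2)\,\mathcal{Z}_N\pa{f,\rho}\,d\rho,
\end{equation}
where the second equality uses the disintegration above and the definition (\ref{eq: normalissation function}) of $\mathcal{Z}_N$; the substitution $u=\rho^2$ (so $\rho^{N-1}\,d\rho=\frac{1}{2}u^{\frac{N-2}{2}}\,du$) rewrites the right-hand side as $\int_0^\infty \phi(u)\,\frac{1}{2}\abs{\mathbb{S}^{N-1}}u^{\frac{N-2}{2}}\mathcal{Z}_N\pa{f,\sqrt u}\,du$. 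Comparing the two integral representations of $\mathbb{E}\br{\phi(W)}$ over all such $\phi$ yields
\begin{equation}\nonumber
h^{\ast N}(u)=\frac{\abs{\mathbb{S}^{N-1}}}{2}\,u^{\frac{N-2}{2}}\,\mathcal{Z}_N\pa{f,\sqrt u}\qquad\text{for a.e. }u>0,
\end{equation}
which rearranges into (\ref{eq: interpretation of the normalisation function}).

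The one point needing a little care — and the only real obstacle — is upgrading this from an almost-everywhere identity to the pointwise statement in the lemma, i.e. ensuring both sides are genuinely defined as functions of $r$. Since $h\in L^1(\mathbb{R})$, $h^{\ast N}$ is always well defined as an $L^1$ function; to obtain a continuous version (which then transfers a legitimate pointwise value to $\mathcal{Z}_N\pa{f,\sqrt r}$) I would invoke a mild integrability hypothesis on $f$, e.g. $f\in L^p(\mathbb{R})$ for some $p>1$: taking into account the integrable singularity of $h$ at the origin, this forces $h\in L^s(\mathbb{R})$ for some $s>1$, and iterating Young's inequality shows $h^{\ast N}$ is bounded and continuous once $N$ is large enough. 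This is the step I expect to require the most bookkeeping, but it is routine; the algebraic heart of the lemma is simply matching the Jacobian of $u=\rho^2$ against the surface-area weight $\abs{\mathbb{S}^{N-1}}\rho^{N-1}$.
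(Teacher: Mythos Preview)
Your proposal is correct and follows essentially the same approach as the paper: both proofs compute the law of $\sum V_i^2$ in two ways---once via the polar disintegration and the definition of $\mathcal{Z}_N$, once via the convolution $h^{\ast N}$---and then match densities after the change of variables $u=\rho^2$. Your added discussion of the a.e.\ versus pointwise issue is more careful than the paper, which simply asserts the identity without addressing it.
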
 
Proof for the above lemma can be found in \cite{CCRLV,Einav1}, yet we present it here for completion.
\begin{proof}
Denote by $S_N=\sum_{i=1}^{N} V_i^2$ the sum of independent copies of the real random variable $V^2$. 
For any function $\varphi\in C_b\pa{\mathbb{R}^N}$, depending only on $r=\sqrt{\sum_{i=1}^N v_i^2}$ we find that
\begin{equation}\nonumber
\begin{gathered}
\mathds{E}\varphi=\int_{\mathbb{R}^N}\varphi\pa{\sqrt{\sum_{i=1}^N v_i^2}}\Pi_{i=1}^N f(v_i)dv_1\dots dv_N =\\
 \abs{\mathbb{S}^{N-1}}\int_0^\infty \varphi(r)r^{N-1}\pa{\int_{\mathbb{S}^{N-1}(r)}\Pi_{i=1}^N f(v_i)d\sigma^N_r}dr= \abs{\mathbb{S}^{N-1}}\int_0^\infty \varphi(r)r^{N-1}\mathcal{Z}_N\pa{f,r}dr
\end{gathered}
\end{equation}
On the other hand
\begin{equation}\nonumber
\begin{gathered}
\mathds{E}\varphi = \int_{0}^\infty \varphi \pa{\sqrt{r}}s_N(r)dr=2\int_{0}^\infty r\varphi(r) s_N\pa{r^2}dr.
\end{gathered}
\end{equation}
Since the above is valid for any $\varphi$ we conclude that
\begin{equation}\nonumber
\mathcal{Z}_N\pa{f,\sqrt{r}}=\frac{2s_N(r)}{\abs{\mathbb{S}^{N-1}}r^{\frac{N-2}{2}}}.
\end{equation}
A known fact from probability theory states that the density function for $S_N$, $s_N$, is given by
\begin{equation}\nonumber
s_N(u)=h^{\ast N}(u)
\end{equation}
where $h^{\ast N}$ is the $N-$th iterated convolution of $h$. This completes the proof.
\end{proof}
\begin{remark}\label{rem: properties of h}
It is easy to see that probability density function $h$, associated to the probability density function $f$ as described in the above lemma, is given by
\begin{equation}\label{eq: formula for h}
h(u)=
\begin{cases}
\frac{f\pa{\sqrt{u}}+f\pa{-\sqrt{u}}}{2\sqrt{u}} & u>0 \\
0 & u\leq 0
\end{cases}
\end{equation}
As such, using the convexity of $t\rightarrow t^q$ for any $q>1$, we find that if in addition $f\in L^p(\mathbb{R})$ then
\begin{equation}\label{eq: h is in L^p^prime}
\begin{gathered}
\int h(u)^{p^\prime}du \leq \frac{1}{2} \int_0 ^\infty \frac{f(\sqrt{u})^{p^\prime}+f(-\sqrt{u})^{p^\prime}}{u^{\frac{p^\prime}{2}}}du =\int_\mathbb{R} \frac{f(x)^{p^\prime}}{x^{p^\prime-1}} \\
\leq \int_{[-1,1]} \frac{f(x)^{p^\prime}}{x^{p^\prime-1}}+\int_\mathbb{R} f(x)^{p^\prime}dx \\ 
\leq\left(\int_{[-1,1]} f(x)^{p}dx \right)^{\frac{p^\prime}{p}}
\left(\int_{[-1,1]}\frac{dx}{x^{\frac{p(p^\prime - 1)}{p-p^\prime}}} \right)^{\frac{p-p^\prime}{p^\prime}}
+\int_{f>1} f(x)^{p}dx+\int_{f<1} f(x)dx,
\end{gathered}
\end{equation}
where $p^\prime < p$. Choosing $1<p^\prime<\frac{2p}{1+p}$ we find $h\in L^{p^\prime}\pa{\mathbb{R}}$, showing that $h$ itself gains extra integrability properties in this case. This will serve us later on in Section \ref{sec: chaoticity and entropic chaoticity}.
\end{remark}
\subsection*{Marginals on Kac's Sphere.}
By its definition, chaoticity depends strongly on understanding how finite marginal on Kac's sphere behave. In particular, in our presented cases, we'll be interested to find a simple formula for the $k-$th marginal of probability measures of the form $F_Nd\sigma^N$. To do that we state the following simple lemma, whose proof we'll omit, but can be found in \cite{Einav1}:
\begin{lemma}\label{lem: integration over sphere}
Let $F_N$ be an integrable function on $\mathbb{S}^{N-1}(r)$, then 
\begin{equation}\nonumber
\begin{gathered}
\int_{\mathbb{S}^{N-1}(r)}F_Nd\sigma^N_r=\frac{\abs{\mathbb{S}^{N-j-1}}}{\abs{\mathbb{S}^{N-1}}}\frac{1}{r^{N-2}}\int \pa{r^2-\sum_{i=1}^j v_i^2}_{+}^{\frac{N-j-2}{2}}\\
\pa{\int_{\mathbb{S}^{N-j-1}\pa{\sqrt{r^2-\sum_{i=1}^j v_i^2}}}F_Nd\sigma^{N-j}_{\sqrt{r^2-\sum_{i=1}^j v_i^2}}}dv_1\dots dv_j,
\end{gathered}
\end{equation}
where $g_+ = \max(g,0)$ for a function $g$.
\end{lemma}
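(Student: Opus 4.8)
The plan is to derive the formula from an explicit parametrization of $\mathbb{S}^{N-1}(r)$ by its first $j$ coordinates together with the area formula for surface integrals, followed by a translation between the $(N-1)$-dimensional Hausdorff measure and the normalised uniform probability measure $d\sigma^N_r$. Write a point of $\mathbb{S}^{N-1}(r)$ as $v=(w,u)$ with $w=(v_1,\dots,v_j)\in\mathbb{R}^j$ and $u=(v_{j+1},\dots,v_N)\in\mathbb{R}^{N-j}$. Away from the $\mathcal{H}^{N-1}$-null ``equator'' $\br{u=0}$ one has $\abs{w}<r$ and $u=\rho(w)\,\xi$ for a unique $\xi$ in the Euclidean unit sphere $\mathbb{S}^{N-j-1}\subset\mathbb{R}^{N-j}$, where $\rho(w)=\sqrt{r^2-\abs{w}^2}$. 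Hence the map $\Psi(w,\xi)=\pa{w,\rho(w)\xi}$, defined on $\br{\abs{w}<r}\times\mathbb{S}^{N-j-1}$, parametrizes $\mathbb{S}^{N-1}(r)$ up to a null set, and everything comes down to computing the $(N-1)$-dimensional Jacobian $J\Psi=\sqrt{\det\pa{D\Psi^{T}D\Psi}}$.

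This Jacobian computation is the one genuine step of the proof. Using the natural orthonormal frame of the product manifold (the $j$ flat directions of $w$, together with an orthonormal frame $\tau_1,\dots,\tau_{N-j-1}$ of $T_\xi\mathbb{S}^{N-j-1}$), the columns of $D\Psi$ are $\pa{e_i,-\tfrac{w_i}{\rho}\xi}$ for $i=1,\dots,j$ and $\pa{0,\rho\,\tau_k}$ for $k=1,\dots,N-j-1$. Since the $\tau_k$ are orthogonal to $\xi$, the mixed inner products vanish and $D\Psi^{T}D\Psi$ is block diagonal, equal to $I_j+\rho^{-2}ww^{T}$ on the $w$-block and to $\rho^2 I_{N-j-1}$ on the other. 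Its determinant is therefore $\pa{1+\tfrac{\abs{w}^2}{\rho^2}}\rho^{2(N-j-1)}=\tfrac{r^2}{\rho^2}\,\rho^{2(N-j-1)}=r^2\rho^{2(N-j-2)}$, so $J\Psi=r\,\rho(w)^{N-j-2}$.

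The remainder is bookkeeping of normalising constants. By the area formula,
\begin{equation}\nonumber
\int_{\mathbb{S}^{N-1}(r)}F_N\,d\mathcal{H}^{N-1}=\int_{\abs{w}<r}r\,\rho(w)^{N-j-2}\pa{\int_{\mathbb{S}^{N-j-1}}F_N\pa{w,\rho(w)\xi}\,d\omega_{N-j-1}(\xi)}\,dw,
\end{equation}
$d\omega_{N-j-1}$ being the surface measure of the unit sphere. One then inserts $d\sigma^N_r=d\mathcal{H}^{N-1}/\pa{\abs{\mathbb{S}^{N-1}}r^{N-1}}$, uses the elementary rescaling identity $\int_{\mathbb{S}^{N-j-1}}g(\rho\,\xi)\,d\omega_{N-j-1}(\xi)=\abs{\mathbb{S}^{N-j-1}}\int_{\mathbb{S}^{N-j-1}(\rho)}g\,d\sigma^{N-j}_{\rho}$ (the power $\rho^{N-j-1}$ from the dilation cancelling the normalisation of $d\sigma^{N-j}_{\rho}$), and collects the surviving powers, namely $r\cdot r^{-(N-1)}=r^{-(N-2)}$ and the single factor $\rho(w)^{N-j-2}$. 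This yields
\begin{equation}\nonumber
\int_{\mathbb{S}^{N-1}(r)}F_N\,d\sigma^N_r=\frac{\abs{\mathbb{S}^{N-j-1}}}{\abs{\mathbb{S}^{N-1}}}\,\frac{1}{r^{N-2}}\int_{\abs{w}<r}\rho(w)^{N-j-2}\pa{\int_{\mathbb{S}^{N-j-1}(\rho(w))}F_N\,d\sigma^{N-j}_{\rho(w)}}\,dw.
\end{equation}
Finally, since both $\rho(w)^{N-j-2}=\pa{r^2-\sum_{i\le j}v_i^2}^{(N-j-2)/2}$ and the inner integral vanish once $\abs{w}\ge r$, the integration in $w=(v_1,\dots,v_j)$ may be extended to all of $\mathbb{R}^j$ with the $(\cdot)_+$ convention, which is precisely the claimed identity.

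The main obstacle is thus entirely the linear-algebra step producing $J\Psi=r\,\rho^{N-j-2}$; once this is in hand the identity follows by collecting constants. The only minor care needed is that when $N-j-2<0$ (i.e.\ $j=N-1$) the factor $\rho^{N-j-2}=\rho^{-1}$ is still locally integrable near $\abs{w}=r$, so the formula continues to hold; in the applications, however, $j$ is fixed while $N\to\infty$, so $1\le j\le N-2$ and this point does not arise.
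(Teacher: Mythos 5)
Your proof is correct: the Jacobian computation $J\Psi=r\,\rho^{N-j-2}$ is right (the Gram matrix is block diagonal with determinant $\pa{1+\abs{w}^2/\rho^2}\rho^{2(N-j-1)}=r^2\rho^{2(N-j-2)}$), and the subsequent bookkeeping of normalising constants correctly produces the factor $\abs{\mathbb{S}^{N-j-1}}/\pa{\abs{\mathbb{S}^{N-1}}r^{N-2}}$. The paper omits the proof and defers to \cite{Einav1}, where the argument is essentially this same slicing/parametrization computation (typically organized as the $j=1$ case iterated), so your route is the standard one and nothing is missing.
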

Using the above lemma, one can easily show the following:
\begin{lemma}\label{lem: k-th marginal}
Given a distribution function $F_N$ on Kac's sphere, then the probability density function of the $k-$th marginal of the probability measure $F_Nd\sigma^N$ is given by
\begin{equation}\label{eq: k-th marginal}
\begin{gathered}
\Pi_k(F_N)\pa{v_1,\dots,v_k}=\frac{\abs{\mathbb{S}^{N-k-1}}}{\abs{\mathbb{S}^{N-1}}}\frac{1}{N^{\frac{N-2}{2}}}\pa{N-\sum_{i=1}^k v_i^2}_{+}^{\frac{N-k-2}{2}}\\
\pa{\int_{\mathbb{S}^{N-k-1}\pa{\sqrt{r^2-\sum_{i=1}^k v_i^2}}}F_Nd\sigma^{N-k}_{\sqrt{r^2-\sum_{i=1}^j v_k^2}}}.
\end{gathered}
\end{equation}
\end{lemma}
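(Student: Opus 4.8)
The plan is to deduce Lemma \ref{lem: k-th marginal} directly from Lemma \ref{lem: integration over sphere} by a change of perspective. Since $F_N$ lives on Kac's sphere $\mathbb{S}^{N-1}(\sqrt{N})$, we fix $r = \sqrt{N}$ throughout. First I would recall that the $k$-th marginal of the probability measure $F_N d\sigma^N$ is, by definition, the unique probability density $\Pi_k(F_N)$ on $\mathbb{R}^k$ such that for every $\varphi \in C_b(\mathbb{R}^k)$ one has
\begin{equation}\nonumber
\int_{\mathbb{R}^k} \varphi(v_1,\dots,v_k)\,\Pi_k(F_N)(v_1,\dots,v_k)\,dv_1\cdots dv_k = \int_{\mathbb{S}^{N-1}(\sqrt{N})} \varphi(v_1,\dots,v_k)\, F_N\, d\sigma^N.
\end{equation}
So the whole content of the lemma is to rewrite the right-hand side as an integral over $\mathbb{R}^k$ against an explicit density.

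The key step is to apply Lemma \ref{lem: integration over sphere} with $j = k$ and $r = \sqrt{N}$ to the integrand $\varphi \cdot F_N$ (note $\varphi$, depending only on the first $k$ coordinates, is constant on the fibres $\mathbb{S}^{N-k-1}(\sqrt{N - \sum v_i^2})$ and so pulls out of the inner integral). This immediately yields
\begin{equation}\nonumber
\begin{gathered}
\int_{\mathbb{S}^{N-1}(\sqrt{N})} \varphi\, F_N\, d\sigma^N = \frac{\abs{\mathbb{S}^{N-k-1}}}{\abs{\mathbb{S}^{N-1}}}\frac{1}{N^{\frac{N-2}{2}}}\int_{\mathbb{R}^k} \varphi(v_1,\dots,v_k)\pa{N-\sum_{i=1}^k v_i^2}_+^{\frac{N-k-2}{2}}\\
\pa{\int_{\mathbb{S}^{N-k-1}\pa{\sqrt{N-\sum_{i=1}^k v_i^2}}} F_N\, d\sigma^{N-k}_{\sqrt{N-\sum_{i=1}^k v_i^2}}} dv_1\cdots dv_k.
\end{gathered}
\end{equation}
Since this holds for all test functions $\varphi$, I would conclude that the density against $dv_1 \cdots dv_k$ is exactly the claimed expression \eqref{eq: k-th marginal}. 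It remains only to check that this density integrates to $1$ over $\mathbb{R}^k$ — which follows by taking $\varphi \equiv 1$ and using that $\int_{\mathbb{S}^{N-1}(\sqrt{N})} F_N\, d\sigma^N = 1$ since $F_N$ is a distribution function — so that $\Pi_k(F_N)$ is indeed a bona fide probability density, and by symmetry of $F_N$ it does not matter which $k$ coordinates we single out.

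There is essentially no serious obstacle here: this is a bookkeeping lemma and the only thing to be careful about is the normalisation and the exponents. The one mild subtlety worth flagging is the appearance of $r = \sqrt{N}$ versus a generic $r$ in the statement — the displayed formula \eqref{eq: k-th marginal} writes $N^{\frac{N-2}{2}}$ in one place but $\pa{r^2 - \sum v_i^2}$ inside the fibre integral, which is just shorthand for $r = \sqrt{N}$, and I would make this substitution explicit. Beyond that, one should note that Fubini/Tonelli is justified because $F_N \geq 0$ is integrable on the sphere and $\varphi$ is bounded, so all the manipulations above are legitimate.
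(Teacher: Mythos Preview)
Your proposal is correct and matches the paper's approach exactly: the paper does not write out a proof but simply says ``Using the above lemma, one can easily show the following,'' and you have supplied precisely the details it omits --- apply Lemma \ref{lem: integration over sphere} with $j=k$, $r=\sqrt{N}$, to $\varphi F_N$ and read off the density. Your remark about the $r$ versus $\sqrt{N}$ inconsistency in the displayed formula is also apt; it is indeed just notational shorthand for $r=\sqrt{N}$.
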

Next we show a simple condition for chaoticity, one we will use later on in Section \ref{sec: chaoticity and entropic chaoticity}:
\begin{lemma}\label{lem: simple chaoticity condition}
Let $\br{F_N}_{N\in\mathbb{N}}$ be a family of distribution functions on Kac's sphere. Assume that there exists a distribution function $f$, on $\mathbb{R}$, such that
\begin{equation}\label{eq: pointwise convergence of marginal}
\lim_{N\rightarrow\infty}\Pi_k(F_N)\pa{v_1,\dots,v_k}=f^{\otimes k}\pa{v_1,\dots,v_k}
\end{equation}
pointwise for all $k\in\mathbb{N}$. Then 
\begin{equation}\label{eq: L^1 convergence of marginal}
\lim_{N\rightarrow\infty}\Norm{\Pi_k(F_N)\pa{v_1,\dots,v_k}-f^{\otimes k}\pa{v_1,\dots,v_k}}_{L^1\pa{\mathbb{R}^k}}=0,
\end{equation}
for all $k\in\mathbb{N}$, and n particular $\br{F_N}_{N\in\mathbb{N}}$ is $f-$chaotic.
\end{lemma}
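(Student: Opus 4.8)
The plan is to upgrade the pointwise convergence (\ref{eq: pointwise convergence of marginal}) to $L^1$ convergence by a soft compactness-free argument — Scheffé's lemma — and then observe that $L^1$ convergence of marginals trivially implies the weak convergence required by Definition \ref{def: chaoticty for functions}. First I would record the elementary but essential fact that, for every fixed $N$ and every $k\le N$, the function $\Pi_k(F_N)$ is a genuine probability density on $\mathbb{R}^k$: it is non-negative since $F_N\ge 0$, and it has total mass $1$ because $F_Nd\sigma^N$ is a probability measure and $\Pi_k(F_N)\,dv_1\cdots dv_k$ is one of its marginals (cf.\ Lemma \ref{lem: k-th marginal}). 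Of course $f^{\otimes k}$ is a probability density on $\mathbb{R}^k$ as well. Thus the two densities we are comparing both integrate to $1$.

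Fix $k$ and set $g_N:=\pa{f^{\otimes k}-\Pi_k(F_N)}_+ \ge 0$. By hypothesis $g_N\to 0$ pointwise on $\mathbb{R}^k$, and $0\le g_N\le f^{\otimes k}\in L^1(\mathbb{R}^k)$ with the dominating function independent of $N$; hence by dominated convergence $\int_{\mathbb{R}^k}g_N\to 0$. Since $\int_{\mathbb{R}^k}\pa{\Pi_k(F_N)-f^{\otimes k}}=1-1=0$, the positive and negative parts of $\Pi_k(F_N)-f^{\otimes k}$ have equal integral, so
\[
\Norm{\Pi_k(F_N)-f^{\otimes k}}_{L^1(\mathbb{R}^k)}=2\int_{\mathbb{R}^k}g_N\underset{N\rightarrow\infty}{\longrightarrow}0,
\]
which is (\ref{eq: L^1 convergence of marginal}). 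Finally, for any $\varphi\in C_b(\mathbb{R}^k)$ one has $\abs{\int\varphi\,\Pi_k(F_N)-\int\varphi\,f^{\otimes k}}\le \Norm{\varphi}_{L^\infty}\Norm{\Pi_k(F_N)-f^{\otimes k}}_{L^1(\mathbb{R}^k)}\to 0$, so $\Pi_k(F_N)\rightharpoonup f^{\otimes k}$ in the weak topology for every $k$, i.e.\ $\br{F_N}_{N\in\mathbb{N}}$ is $f$-chaotic.

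I do not expect any genuine obstacle here: the statement is essentially Scheffé's lemma followed by the trivial implication ``$L^1\Rightarrow$ weak.'' The single point that actually carries weight is the use of $\int\Pi_k(F_N)=\int f^{\otimes k}=1$ in passing from $\int g_N\to 0$ to $\Norm{\Pi_k(F_N)-f^{\otimes k}}_{L^1}\to 0$; without the equality of masses, pointwise convergence alone would not force $L^1$ convergence (mass could escape to infinity). The only mild care needed elsewhere is to make sure $\Pi_k(F_N)$ is treated as a density on $\mathbb{R}^k$ with mass exactly $1$, which follows from $F_N$ being a probability density on Kac's sphere.
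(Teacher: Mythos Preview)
Your proof is correct and follows essentially the same route as the paper's: both arguments are Scheff\'e's lemma, exploiting that $\Pi_k(F_N)$ and $f^{\otimes k}$ are probability densities with equal mass. The only cosmetic difference is that the paper dominates $\abs{\Pi_k(F_N)-f^{\otimes k}}$ by the $N$-dependent function $g_N=\Pi_k(F_N)+f^{\otimes k}$ and invokes the \emph{generalised} dominated convergence theorem (since $\int g_N\equiv 2=\int 2f^{\otimes k}$), whereas you dominate the positive part $(f^{\otimes k}-\Pi_k(F_N))_+$ by the fixed function $f^{\otimes k}$ and use ordinary DCT together with the equal-mass trick.
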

The proof for this (and a more general statement) can be found in \cite{Einav3}. Since the proof is very simple we will add it here, for completion.
\begin{proof}
Let $k\in\mathbb{N}$ be fixed. Define $g_N=\Pi_k(F_N)+f^{\otimes k}$. By assumption (\ref{eq: pointwise convergence of marginal}) we know that
\begin{equation}\nonumber
\lim_{N\rightarrow\infty}g_N=2f^{\otimes k}=g,
\end{equation}
pointwise and since $\abs{\Pi_k(F_N)-f^{\otimes k}}\leq g_N$, and 
\begin{equation}\nonumber
\int_{\mathbb{R}^k} g_N\pa{v_1,\dots,v_k}dv_1\dots dv_k = \int_{\mathbb{R}^k} g\pa{v_1,\dots,v_k}dv_1\dots dv_k
\end{equation}
for all $N$, we can use the generalised dominated convergence theorem to conclude (\ref{eq: L^1 convergence of marginal}).
\end{proof}
\subsection*{$\alpha$ Stable Processes.}
The bulk of the material presented in this subsection is taken from the excellent book by Feller, \cite{Feller}, as well as the paper \cite{GJT} by Goudon, Junca and Toscani.\\
The concept of stable distribution appears to be very adequate to deal with many real life situations where a strong deviation from the normal central limit theorem is observed. Stable distribution are a generalisation of the normal distribution, and act as attractors for properly scaled and shifted sums of identically distributed variables.\\
One of the simplest way to discuss stable distribution is via their characteristic function. We remind the reader that in the probabilistic context, the characteristic function, $\widehat{\varphi}$, of a probability density $\varphi$ on $\mathbb{R}$ is given by
\begin{equation}\label{eq: char function}
\widehat{\varphi}(\xi)=\int_{\mathbb{R}}e^{ix\xi}\varphi(x)dx.
\end{equation}
\begin{definition}\label{def: stable}
A random variable $U$ is said to be $\alpha-$stable for $0<\alpha<2$, $\alpha\not=1$ if
\begin{equation}\nonumber
\frac{\sum_{i=1}^n X_i}{n^{\frac{1}{\alpha}}}
\end{equation}
has the same probability distribution function as $U$, where $X_i$ are independent copies of $U$. Equivalently, the characteristic function of $U$ is of the form
\begin{equation}\label{eq: char function of alpha stable I}
\widehat{\gamma}_{C_S,\alpha,p,q}(\xi)=e^{-C_S \abs{\xi}^{\alpha}\cdot\frac{\Gamma(3-\alpha)}{\alpha(\alpha-1)}\cos\pa{\frac{\pi\alpha}{2}}\pa{1+i\text{sgn}(\xi)(p-q)\tan{\frac{\alpha\pi}{2}}}},
\end{equation}
with $C_S>0$, $p,q\geq 0$ and $p+q=1$.
\end{definition}
\begin{remark}\label{rem: stricly stable and probability distribution function}
Some books, including Feller's, refer to above definition as \emph{strict stability}. 
\end{remark}
\begin{remark}\label{rem: char funtion for alpha stable II}
Equation (\ref{eq: char function of alpha stable I}) can be rewritten in the form
\begin{equation}\label{eq: char funtion of alpha stable II}
\widehat{\gamma}_{\sigma,\alpha,\beta}(\xi)=e^{-\sigma \abs{\xi}^\alpha \pa{1+i\beta\text{sgn}(\xi)\tan{\frac{\alpha\pi}{2}}}},
\end{equation}
where 
\begin{equation}\nonumber
\begin{gathered}
\sigma=C_S\cdot\frac{\Gamma(3-\alpha)}{\alpha(\alpha-1)}\cos\pa{\frac{\pi\alpha}{2}}>0, \; \beta=p-q.
\end{gathered}
\end{equation}
We will use both forms in accordance to the situation.
\end{remark}
We will now define the Domain of Attraction of a stable distribution (which we will identify via its characteristic function), as well as the Natural Domain of Attraction and the Fourier Domain of Attraction. 
\begin{definition}\label{def: DA}
The \emph{Domain of Attraction} (in short, DA) of $\widehat{\gamma}_{\sigma,\alpha,\beta}$ is the set of all real random variables $X$ such that there exist sequences $\br{a_n}_{n\in\mathbb{N}}>0$ and $\br{b_n}_{n\in\mathbb{N}}\in\mathbb{R}$ such that
\begin{equation}\label{eq: DA weak equation}
\frac{\sum_{i=1}^n X_i}{a_n}-nb_n \underset{n\rightarrow\infty}{\longrightarrow} U,
\end{equation}
where $X_i$ are independent copies of $X$, $U$ is the real random variable with characteristic function $\widehat{\gamma}_{\sigma,\alpha,\beta}$ and the limit is to be understood in the weak sense. Equivalently, one can prove that the DA of $\widehat{\gamma}_{\sigma,\alpha,\beta}$ is the set of all real random variables $X$, whose characteristic function $\widehat{\psi}$ satisfies
\begin{equation}\label{eq: char charactirization of DA}
n\left(\widehat{\psi} \left(\frac{\xi}{a_n} \right)e^{-ib_n \xi}-1 \right) \underset{n\rightarrow\infty}{\longrightarrow} -\sigma|\xi|^\alpha \left(1+i\beta\text{sgn}(\xi)\tan\left(\frac{\pi\alpha}{2} \right) \right),
\end{equation}

where $\br{a_n}_{n\in\mathbb{N}}$ and $\br{b_n}_{n\mathbb{N}}$ are sequences as in (\ref{eq: DA weak equation}) (See \cite{Feller}).
\end{definition}
\begin{definition}\label{def: NDA}
The \emph{Natural Domain of Attraction} (in short, NDA) of $\widehat{\gamma}_{\sigma,\alpha,\beta}$ is the subset of the DA of $\widehat{\gamma}_{\sigma,\alpha,\beta}$  for which $a_n=n^{\frac{1}{\alpha}}$ and $b_n=0$ are applicable as a sequences in (\ref{eq: DA weak equation}).
\end{definition}
\begin{definition}\label{def: FDA}
The \emph{Fourier Domain of Attraction} (in short, FDA) of $\widehat{\gamma}_{\sigma,\alpha,\beta}$  is the set of all real random variables $X$ whose characteristic function $\widehat{\psi}$ satisfies
\begin{equation}\label{eq: char charactirization of FDA}
\widehat{\psi}(\xi)=1 -\sigma\abs{\xi}^\alpha \left(1+i\beta\text{sgn}(\xi)\tan\left(\frac{\pi\alpha}{2} \right) \right)+\eta(\xi),
\end{equation}
where $\frac{\eta(\xi)}{|\xi|^\alpha} \in L^\infty$ and $\frac{\eta(\xi)}{|\xi|^\alpha}\underset{\xi\rightarrow 0}{\longrightarrow}0$. The function $\eta$ is called \emph{the reminder function} of $\widehat{\psi}$.
\end{definition}
The next theorem, taken from \cite{GJT}, is important for our local central limit theorem. The fact that it only works in $\mathbb{R}$ will affect the lower semi-continuity property, discussed in Section \ref{sec: lsc and stability}.
\begin{theorem}\label{thm: FDA and NDA}
For any $\widehat{\gamma}_{\sigma,\alpha,\beta}$ we have that the NDA equals the FDA.
\end{theorem}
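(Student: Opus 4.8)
The plan is to establish the two inclusions separately, using the characteristic-function characterization of the DA from Definition \ref{def: DA} as the common bridge. Throughout, $X$ denotes a real random variable with characteristic function $\widehat{\psi}$, and we fix the target stable law $\widehat{\gamma}_{\sigma,\alpha,\beta}$ with $0<\alpha<2$, $\alpha\neq 1$.

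\textbf{Step 1: FDA $\subseteq$ NDA.} Suppose $\widehat\psi$ is of the form (\ref{eq: char charactirization of FDA}), i.e. $\widehat\psi(\xi) = 1 - \sigma|\xi|^\alpha\bigl(1+i\beta\,\mathrm{sgn}(\xi)\tan(\tfrac{\pi\alpha}{2})\bigr) + \eta(\xi)$ with $\eta(\xi)/|\xi|^\alpha \in L^\infty$ and $\eta(\xi)/|\xi|^\alpha \to 0$ as $\xi\to 0$. I take $a_n = n^{1/\alpha}$ and $b_n = 0$ and simply substitute into the left-hand side of (\ref{eq: char charactirization of DA}): since $a_n^{-\alpha} = 1/n$, one gets
\[
n\Bigl(\widehat\psi\bigl(\tfrac{\xi}{n^{1/\alpha}}\bigr) - 1\Bigr) = -\sigma|\xi|^\alpha\bigl(1+i\beta\,\mathrm{sgn}(\xi)\tan(\tfrac{\pi\alpha}{2})\bigr) + n\,\eta\bigl(\tfrac{\xi}{n^{1/\alpha}}\bigr),
\]
and the error term equals $|\xi|^\alpha \cdot \dfrac{\eta(\xi/n^{1/\alpha})}{|\xi/n^{1/\alpha}|^\alpha} \to 0$ for each fixed $\xi$ because $\xi/n^{1/\alpha}\to 0$. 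By the equivalent characterization in Definition \ref{def: DA}, $X$ lies in the DA of $\widehat\gamma_{\sigma,\alpha,\beta}$ with the canonical choice of constants, hence in its NDA. This direction is routine.

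\textbf{Step 2: NDA $\subseteq$ FDA.} This is the substantive direction and where the main work lies. Assume $X\in$ NDA, so (\ref{eq: char charactirization of DA}) holds with $a_n=n^{1/\alpha}$, $b_n=0$:
\[
n\Bigl(\widehat\psi\bigl(\tfrac{\xi}{n^{1/\alpha}}\bigr)-1\Bigr) \xrightarrow[n\to\infty]{} -\sigma|\xi|^\alpha\bigl(1+i\beta\,\mathrm{sgn}(\xi)\tan(\tfrac{\pi\alpha}{2})\bigr)
\]
for every $\xi\in\mathbb{R}$. I define the remainder by $\eta(\xi) := \widehat\psi(\xi) - 1 + \sigma|\xi|^\alpha\bigl(1+i\beta\,\mathrm{sgn}(\xi)\tan(\tfrac{\pi\alpha}{2})\bigr)$, so that (\ref{eq: char charactirization of FDA}) holds by construction; what must be proved is that $g(\xi):=\eta(\xi)/|\xi|^\alpha$ is bounded and tends to $0$ at the origin. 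The standard route (as in Feller, \cite{Feller}) is to first record that membership in the DA forces the tails of $X$ to be regularly varying of index $-\alpha$ — i.e. $P(|X|>t) = t^{-\alpha}L(t)$ with $L$ slowly varying — and, crucially, that the normalization $a_n = n^{1/\alpha}$ (rather than a general regularly-varying-of-index-$1/\alpha$ sequence) pins $L$ down to a constant, so the tails are genuinely $\sim c\,t^{-\alpha}$. One then transfers this exact tail asymptotic back to the characteristic function: a Tauberian/Fourier computation shows that for a distribution with tails asymptotically $c\,|t|^{-\alpha}$ and the appropriate asymmetry (encoded by $p,q$, hence $\beta$), one has $\widehat\psi(\xi) - 1 = -\sigma|\xi|^\alpha(1 + i\beta\,\mathrm{sgn}(\xi)\tan(\tfrac{\pi\alpha}2)) + o(|\xi|^\alpha)$ as $\xi\to 0$, which is exactly $g(\xi)\to 0$. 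Boundedness of $g$ away from $0$ is immediate since $\widehat\psi$ is bounded and $|\xi|^\alpha$ is bounded below on $|\xi|\ge\varepsilon$; near $0$ it follows from the $o(|\xi|^\alpha)$ estimate.

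\textbf{Main obstacle.} The delicate point is Step 2: upgrading the \emph{convergence in (\ref{eq: char charactirization of DA}) along the discrete sequence $n\to\infty$ with scaling $n^{1/\alpha}$} to a \emph{genuine continuous asymptotic $\eta(\xi)=o(|\xi|^\alpha)$ as $\xi\to 0$}. The subtlety is that (\ref{eq: char charactirization of DA}) a priori controls $\widehat\psi$ only at the countable set of points $\xi/n^{1/\alpha}$; one needs a regularity/monotonicity input — precisely the regular variation of the tail function and the fact that the slowly varying part is asymptotically constant under the $n^{1/\alpha}$ normalization — to interpolate between these points and conclude the limit holds as the continuous variable $\xi\to 0$. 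I would handle this by invoking the classical equivalence (Feller, Ch. XVII, or the Lévy–Khintchine tail criterion) between being in the DA with normalizer $a_n$ and the regular variation of $U(t)=\int_{|x|\le t}x^2\,dF(x)$ / the tails, and then the known Abelian–Tauberian dictionary between power-law tails and the behavior of $\widehat\psi$ near the origin; the remaining estimates to verify the two properties of $g$ are then elementary. Since the paper attributes this statement to \cite{GJT}, I would also cite that reference for the precise form of the argument in the asymmetric case.
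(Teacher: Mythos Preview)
Your Step 1 (FDA $\subseteq$ NDA) is exactly the paper's argument: substitute $a_n=n^{1/\alpha}$, $b_n=0$ into (\ref{eq: char charactirization of DA}) and watch the error $|\xi|^\alpha\cdot\eta(\xi n^{-1/\alpha})/|\xi n^{-1/\alpha}|^\alpha$ vanish.

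For Step 2 (NDA $\subseteq$ FDA) your outline would work, but it is a genuinely different and considerably heavier route than the one the paper takes. You propose to pass through the probabilistic side: use the classical equivalence between DA membership and regular variation of the tails, argue that the specific normalizer $a_n=n^{1/\alpha}$ forces the slowly varying part to be asymptotically constant (which itself needs something like the Uniform Convergence Theorem for slowly varying functions), and then invoke an Abelian--Tauberian dictionary to transfer the tail asymptotic back to the expansion of $\widehat\psi$ near $0$. The paper avoids all of this machinery and stays entirely on the Fourier side. It defines $\eta$ exactly as you do, sets $g(\xi)=\eta(\xi)/|\xi|^\alpha$, and observes that the NDA hypothesis (\ref{eq: char charactirization of DA}) says precisely that $g(\xi/n^{1/\alpha})\to 0$ for every fixed $\xi\neq 0$. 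The discrete-to-continuous upgrade that you flag as the main obstacle is then handled by a purely real-analytic lemma (Lemma \ref{lem: technical FDA NDA lemma} in the paper): if $g$ is continuous on $\mathbb{R}\setminus\{0\}$ and $g(x/n)\to 0$ for every $x\neq 0$, then $\lim_{x\to 0}g(x)=0$. The proof of that lemma is a short nested-intervals argument. Boundedness of $g$ is then immediate: near $0$ from the limit just established, and for $|\xi|\geq\delta$ from $|\eta(\xi)|\leq 2+\sigma(1+|\beta\tan(\pi\alpha/2)|)|\xi|^\alpha$.

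In short: your detour through tail regular variation is correct in principle but imports substantial outside theory; the paper's proof is self-contained and elementary, trading the probabilistic input for a one-line continuity lemma.
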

Due to its importance, we will present a full proof for this theorem. The proof relies on the following technical lemma (again, taken from \cite{GJT}):
\begin{lemma}\label{lem: technical FDA NDA lemma}
Let $g:\mathbb{R}\setminus \left\lbrace 0 \right\rbrace \rightarrow \mathbb{R}$ be a continuous function that satisfies $\lim_{n\rightarrow\infty}g \left(\frac{x}{n} \right)=0$ for any $x\in\mathbb{R} \setminus \left\lbrace 0 \right\rbrace$. Then $\lim_{x\rightarrow 0 }g(x)=0$.
\end{lemma}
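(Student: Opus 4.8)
The statement to prove is Lemma~\ref{lem: technical FDA NDA lemma}: if $g:\mathbb{R}\setminus\{0\}\to\mathbb{R}$ is continuous and $g(x/n)\to 0$ as $n\to\infty$ for every fixed $x\neq 0$, then $g(x)\to 0$ as $x\to 0$.

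\medskip

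The plan is to argue by contradiction using a Baire category / Osgood-type argument, which is the standard way to upgrade a pointwise decay hypothesis to a uniform one. Suppose the conclusion fails, so there exist $\varepsilon_0>0$ and a sequence $x_k\to 0$ with $|g(x_k)|\geq 2\varepsilon_0$. The hypothesis says that for each $x$ in, say, the compact interval $K=[1,2]$ (a fundamental domain for the scaling $x\mapsto x/2$ in the relevant sense), the tail sequence $\big(g(x/n)\big)_{n\geq 1}$ converges to $0$. For $m\in\mathbb{N}$ define the closed set
\begin{equation}\nonumber
E_m = \br{x\in K \; : \; \abs{g(x/n)} \leq \varepsilon_0 \text{ for all } n\geq m}.
\end{equation}
Each $E_m$ is closed in $K$ by continuity of $g$ (it is an intersection over $n\geq m$ of the closed sets $\{x : |g(x/n)|\leq\varepsilon_0\}$), and $\bigcup_m E_m = K$ by the pointwise hypothesis. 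Since $K$ is a complete metric space, Baire's theorem gives some $m_0$ such that $E_{m_0}$ contains a nontrivial subinterval $[a,b]\subset K$. Thus $|g(y)|\leq\varepsilon_0$ for every $y\in\bigcup_{n\geq m_0}[a/n,b/n]$.

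\medskip

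The remaining step is to observe that the union of intervals $\bigcup_{n\geq m_0}[a/n,b/n]$ covers a full punctured neighbourhood $(0,\rho)$ of the origin once $n$ is large. Indeed, the consecutive intervals $[a/(n+1),b/(n+1)]$ and $[a/n,b/n]$ overlap precisely when $b/(n+1)\geq a/n$, i.e. when $n \geq a/(b-a)$; for all such $n$ the intervals chain together, so $\bigcup_{n\geq N}[a/n,b/n] = (0, b/N]$ for $N=\max(m_0, \lceil a/(b-a)\rceil)$. Hence $|g(y)|\leq\varepsilon_0$ for all $y\in(0,b/N]$. The same argument applied on a compact interval of negative reals (e.g. $[-2,-1]$) yields $|g(y)|\leq\varepsilon_0'$ on $(-\rho',0)$. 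This contradicts the existence of the sequence $x_k\to 0$ with $|g(x_k)|\geq 2\varepsilon_0$ (after relabelling the two one-sided $\varepsilon$'s by their max), and the lemma follows. One concludes $\lim_{x\to 0}g(x)=0$.

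\medskip

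The main obstacle is purely the bookkeeping in the chaining step: one must be careful that the overlap condition $b/(n+1)\geq a/n$ does hold for all sufficiently large $n$ (it does, since $b/(n+1)-a/n = (bn-a(n+1))/(n(n+1))$ has numerator $(b-a)n - a \to +\infty$), so that the union of the shrunk copies of $[a,b]$ genuinely fills an interval down to $0$ rather than leaving gaps. Everything else — closedness of the $E_m$, the Baire category application, and the two-sided patching — is routine. An alternative to Baire would be a direct nested-interval construction, but the category argument is cleanest and is almost certainly what the authors intend.
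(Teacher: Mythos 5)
Your proof is correct, but it takes a different route from the paper's. You invoke the Baire category theorem: writing $[1,2]=\bigcup_m E_m$ with $E_m=\bigcap_{n\geq m}\br{x:\abs{g(x/n)}\leq\varepsilon_0}$ closed, you extract an interval $[a,b]$ on which the tail bound is \emph{uniform}, and then chain the scaled copies $[a/n,b/n]$ (which overlap once $n\geq a/(b-a)$) to conclude $\abs{g}\leq\varepsilon_0$ on a full one-sided punctured neighbourhood of $0$; repeating on $[-2,-1]$ handles the other side. The paper instead runs a bespoke nested-interval construction: starting from a sequence $x_k\to 0$ with $\abs{g(x_k)}\geq\epsilon_0$ and using continuity to fatten each $x_{k_i}$ into a small interval, it builds integers $n_i\geq i$ and nested nonempty closed intervals $[\xi_i,\xi_i+n_id_i]$ on which $\abs{g(\cdot/n_i)}\geq\epsilon_0/2$, so that a point $x$ in the intersection violates $g(x/n)\to 0$ along the subsequence $n_i$. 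The two arguments are dual in flavour — you locate a region where the hypothesis holds uniformly and propagate smallness down to $0$, while the paper manufactures a single bad point at which the hypothesis fails — and both ultimately rest on completeness of $\mathbb{R}$ (indeed the nested-interval scheme is essentially the proof of Baire's theorem unwound by hand). Your version is shorter and cleaner at the cost of citing Baire as a black box; the paper's is self-contained. All the details you flag (closedness of $E_m$, the overlap condition $(b-a)n\geq a$, the identity $\bigcup_{n\geq N}[a/n,b/n]=(0,b/N]$, and the two-sided patching) check out.
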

We leave the proof to the Appendix, and show how one can prove Theorem \ref{thm: FDA and NDA} using it.
\begin{proof}[Proof of Theorem \ref{thm: FDA and NDA}]
We start with the easy direction. Assume that $\widehat{\psi}$ is in the FDA of $\widehat{\gamma}_{\sigma,\alpha,\beta}$. We have that
\[n \left( \widehat{\psi} \left( \frac{\xi}{n^{\frac{1}{\alpha}}} \right) -1 \right) =- n\cdot \frac{\sigma|\xi|^\alpha}{n} \left(1+i\beta\text{sgn}\left( \frac{\xi}{n^{\frac{1}{\alpha}}}\right)\tan\left(\frac{\pi\alpha}{2} \right) \right)+n\eta\left( \frac{\xi}{n^{\frac{1}{\alpha}}}\right).\]
\[=-\sigma \abs{\xi}^\alpha \pa{1+i\beta\text{sgn}(\xi)\tan\pa{\frac{\pi\alpha}{2}}}+\abs{\xi}^\alpha \cdot \frac{\eta\pa{\frac{\xi}{n^{\frac{1}{\alpha}}}}}{\pa{\frac{\xi}{n^{\frac{1}{\alpha}}}}^\alpha},\]
concluding the desired result.\\
Conversely, assume that $\widehat{\psi}$ is in the NDA of $\widehat{\gamma}_{\sigma,\alpha,\beta}$ and define 
\[\eta(\xi)=\widehat{\psi}(\xi)-1+\sigma |\xi|^\alpha \left(1+i\beta\text{sgn}(\xi)\tan\left(\frac{\pi\alpha}{2} \right) \right).\]
We have that for any $\xi\not=0$
\[\frac{\eta \left(\frac{\xi}{n^{\frac{1}{\alpha}}} \right)}{\left\lvert \frac{\xi}{n^{\frac{1}{\alpha}}}\right\rvert^{\alpha}}
=\frac{1}{|\xi|^\alpha}\left(n\left(\widehat{\psi}\left(\frac{\xi}{n^{\frac{1}{\alpha}}}\right)-1 \right) +\sigma |\xi|^\alpha \left(1+i\beta\text{sgn}(\xi)\tan\left(\frac{\pi\alpha}{2} \right) \right)\right)\underset{n\rightarrow\infty}{\longrightarrow}0.\]
Defining $g(\xi)=\frac{\eta(\xi)}{|\xi|^\alpha}$ we find that $g$ is continuous on $\mathbb{R}\setminus \left\lbrace 0 \right\rbrace$ and 
\[g\left( \frac{\xi}{n^{\frac{1}{\alpha}}}\right)\underset{n\rightarrow\infty}{\longrightarrow}0\]
for any $\xi\not=0$. A simple modification of Lemma \ref{lem: technical FDA NDA lemma} proves that $\lim_{\xi\rightarrow 0}g(\xi)=0$. This also shows, since $\eta$ is continuous, that $\frac{\eta(\xi)}{|\xi|^{\alpha}}$ is bounded around $\xi=0$. For $|\xi|>\delta$ we have that
\[\frac{|\eta(\xi)|}{|\xi|^\alpha} \leq \frac{2}{\delta^\alpha}+\sigma\left(1+\abs{\beta}\left\lvert\tan\left(\frac{\pi\alpha}{2} \right)\right\rvert \right), \]
proving that $\frac{\eta(\xi)}{|\xi|^{\alpha}}\in L^\infty$, and the result follows.
\end{proof}
Theorem \ref{thm: FDA and NDA} gives us a very convenient approximation for the characteristic function of any real random variable in the NDA of $\widehat{\gamma}_{\sigma,\beta,\alpha}$, one we will use quite strongly in the next section. For now, we finish by quoting a theorem from Feller's book, \cite{Feller}, giving conditions for a real random variable to be in the NDA of a stable distribution.
\begin{theorem}\label{thm: NDA conditions from Feller}
Let $F$ be a probability distribution function of a real random variable, $X$, that has zero mean, and let $1<\alpha< 2$. Denote by
\begin{equation}\label{eq: definition of mu from DA feller theorem}
\mu(x) = \int_{-x}^x y^2 F(dy).
\end{equation}
If
\begin{enumerate}[(i)]
\item \begin{equation}\label{eq: conditions on mu from feller for DA}
\mu(x)\underset{x\longrightarrow\infty}{\sim} x^{2-\alpha} L(x),
\end{equation} 
where $L$ is slowly varying (i.e. $\frac{L(tx)}{L(x)}\underset{x\rightarrow\infty}{\longrightarrow}1$ for any $t>0$).
\item
\begin{equation}\label{eq: conditions on F to find p and q}
\begin{gathered}
\frac{1-F(x)}{1-F(x)+F(-x)}\underset{x\rightarrow\infty}{\longrightarrow}p, \\
\frac{F(-x)}{1-F(x)+F(-x)}\underset{x\rightarrow\infty}{\longrightarrow}q.
\end{gathered}
\end{equation}
\item There exists a sequence $\br{a_n}_{n\in\mathbb{N}}>0$ such that
\begin{equation}\label{eq: conditions on a_n for Feller DA}
\frac{n\mu(a_n)}{a_n^2} \underset{n\rightarrow\infty}{\longrightarrow}C_{S}.
\end{equation}
\end{enumerate}
Then $X$ is in the DA of $\widehat{\gamma}_{C_S,\alpha,p,q}$ with $\br{a_n}_{n\in\mathbb{N}}$ found in $(iii)$ and $b_n=0$. 
\end{theorem}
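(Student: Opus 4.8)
The statement is classical --- it is one of the domain-of-attraction theorems in Feller's book \cite{Feller} --- so the plan is to reproduce its characteristic-function proof in the notation of the excerpt. By the equivalent, Fourier-side formulation of Definition \ref{def: DA}, it suffices to prove that, with $b_n=0$ and $\br{a_n}$ supplied by hypothesis $(iii)$, the characteristic function $\widehat{\psi}$ of $X$ satisfies
\[
n\pa{\widehat{\psi}\pa{\frac{\xi}{a_n}}-1}\underset{n\to\infty}{\longrightarrow}-\sigma\abs{\xi}^\alpha\pa{1+i\beta\,\text{sgn}(\xi)\tan\pa{\frac{\pi\alpha}{2}}},
\]
with $\sigma$ and $\beta=p-q$ the parameters exhibited in Remark \ref{rem: char funtion for alpha stable II}. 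Since $X$ has zero mean I would first center, writing $\widehat{\psi}(\xi)-1=\int_{\mathbb{R}}\pa{e^{ix\xi}-1-ix\xi}F(dx)$, so that the whole question reduces to the behaviour of this integral as $\xi\to 0^+$ (note $\xi/a_n\to 0$ and $a_n\to\infty$).

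Next I would analyse the real part, $\mathrm{Re}\pa{\widehat{\psi}(\xi)-1}=-\int(1-\cos x\xi)\,F(dx)$. Splitting the integral at $\abs{x}=1/\xi$ and integrating by parts against the truncated-variance function $\mu$ of (\ref{eq: definition of mu from DA feller theorem}) recasts $\int(1-\cos x\xi)\,F(dx)$ as $\xi^2\int_0^\infty K(x\xi)\,d\mu(x)$ for an explicit kernel $K$. Hypothesis $(i)$ makes $\mu$ regularly varying of index $2-\alpha$, so Karamata's theorem --- in particular the tail estimate $\int_{\abs{y}>x}F(dy)\sim\frac{2-\alpha}{\alpha}\mu(x)/x^2$ and the uniform convergence theorem for slowly varying functions --- lets me pass to the limit and obtain $1-\mathrm{Re}\,\widehat{\psi}(\xi)\sim c\,\xi^\alpha L(1/\xi)$, where $c=\frac{\Gamma(3-\alpha)}{\alpha(\alpha-1)}\abs{\cos(\frac{\pi\alpha}{2})}$; the Gamma factor and the cosine come out of the elementary integral $\int_0^\infty(1-\cos t)\,t^{-1-\alpha}\,dt$, together with the extra $(2-\alpha)$ produced by differentiating the regularly varying $\mu$.

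For the imaginary part, $\mathrm{Im}\pa{\widehat{\psi}(\xi)-1}=\int(\sin x\xi-x\xi)\,F(dx)$, I would split the integral over $\br{x>0}$ and $\br{x<0}$ separately. Hypothesis $(ii)$ says the one-sided tails are asymptotically the fractions $p$ and $q$ of the full tail, so the two one-sided contributions are, to leading order, the same regularly varying quantity times $p$, respectively $q$, and their signed sum is governed by $p-q$. The companion elementary integral $\int_0^\infty(\sin t-t\,\mathds{1}_{\br{t\le 1}})\,t^{-1-\alpha}\,dt$ is exactly $\tan(\frac{\pi\alpha}{2})$ times the one appearing in the real part --- this is the usual ``stable'' identity $\int_0^\infty(e^{it}-1-it)\,t^{-1-\alpha}\,dt=\Gamma(-\alpha)e^{i\pi\alpha/2}$ --- so $\mathrm{Im}\,\widehat{\psi}(\xi)\sim -c\,(p-q)\tan(\frac{\pi\alpha}{2})\,\mathrm{sgn}(\xi)\,\abs{\xi}^\alpha L(1/\abs{\xi})$.

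Finally I would substitute $\xi\mapsto\xi/a_n$ and multiply by $n$. Hypothesis $(iii)$, $n\mu(a_n)/a_n^2\to C_S$, forces $a_n$ to be regularly varying of index $1/\alpha$, whence $L(a_n/\abs{\xi})\sim L(a_n)$ and $\mu(a_n/\abs{\xi})\sim\abs{\xi}^{\alpha-2}\mu(a_n)$; inserting these into the two asymptotics gives $n\pa{1-\mathrm{Re}\,\widehat{\psi}(\xi/a_n)}\to c\,C_S\abs{\xi}^\alpha=\sigma\abs{\xi}^\alpha$ and the matching imaginary part with factor $\beta=p-q$. This is precisely the criterion (\ref{eq: char charactirization of DA}) with $b_n=0$, so $X$ lies in the DA of $\widehat{\gamma}_{C_S,\alpha,p,q}$, as claimed. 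The main obstacle is entirely the bookkeeping of constants and of regular variation: one must check that the \emph{same} slowly varying $L$ governs the real and the imaginary asymptotics, justify interchanging limit and integral at every stage via the uniform convergence theorem for slowly varying functions, and carefully track the $\Gamma$-reflection identities --- sign errors are especially easy here because $\cos(\frac{\pi\alpha}{2})<0$ throughout the range $1<\alpha<2$.
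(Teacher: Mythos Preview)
The paper does not prove this theorem at all: it is introduced with the words ``we finish by quoting a theorem from Feller's book,'' and is used as a black box thereafter (see Remarks \ref{rem:alpha less than 1} and \ref{rem: our particular case with respect to feller's thm}, and the application in the proof of Theorem \ref{thm: main approximation theorem with conditions}). So there is no in-paper argument to compare your proposal against.

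That said, your sketch is the standard characteristic-function route one finds in Feller (Chapter XVII) and in the regular-variation literature: center using the zero-mean hypothesis, split real and imaginary parts, use the regular variation of $\mu$ from $(i)$ together with Karamata-type arguments to extract the $\abs{\xi}^\alpha L(1/\abs{\xi})$ behaviour, use the tail-balance condition $(ii)$ to produce the skewness factor $p-q$, and finally use $(iii)$ to convert the slowly varying factor into the constant $C_S$ after the substitution $\xi\mapsto\xi/a_n$. The structure is correct. Your own caveat is the right one: the only genuine work is in the bookkeeping --- verifying that the same slowly varying $L$ controls both the real and imaginary asymptotics (this is where $(ii)$ is used in earnest, via the relation between the one-sided tails and $\mu(x)/x^2$), justifying the limit-integral interchanges via Potter-type bounds or the uniform convergence theorem, and tracking signs given that $\cos\pa{\frac{\pi\alpha}{2}}<0$ for $1<\alpha<2$. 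If you intend to include a full proof rather than cite Feller, those are the points that need to be written out carefully; otherwise a citation, as the paper does, is entirely appropriate.
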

\begin{remark}\label{rem:alpha less than 1}
It is worth mentioning that a similar, less restrictive theorem, holds in the case $0<\alpha<1$. Since we will not use it in this work, we decided to exclude it from this section. For more information we refer the interested reader to \cite{Feller}.
\end{remark}
\begin{remark}\label{rem: our particular case with respect to feller's thm}
Of particular interest to us are the following cases:
\begin{itemize}
\item if in condition $(i)$ of Theorem \ref{thm: NDA conditions from Feller} one has that $L(x)\underset{x\rightarrow\infty}{\sim}C_S$ then the sequence 
\[a_n=n^{\frac{1}{\alpha}}\]
will be suitable for condition $(iii)$ of the same theorem.
\item If the probability distribution function, $F(x)$, is supported in $[\kappa,\infty)$ for some $\kappa\in \mathbb{R}$ then condition $(ii)$ of Theorem \ref{thm: NDA conditions from Feller}
 is immediately satisfied with $p=1$ and $q=0$.
 \end{itemize}
\end{remark}
We are now ready to begin with the main technical tool of this paper - a local L\'evy central limit theorem.

\section{L\'evy Type Local Central Limit theorem.}\label{sec: local central limit}
The central limit theorem is one of those rare theorems that is of immense importance both theoretically and in practice. The first version to be discovered involved convergence to a normal distribution of certain rescaled and shifted sums of independent identically distributed real variables, but as more and more cases of deviation from such nice distribution were observed, a more general version of a central limit theorem, one involving the stable distribution, was investigated. Of particular interest in our field of study is the concept of a \emph{local central limit theorem}, that is - a central limit theorem that doesn't only apply to the probability distribution function but to the probability density function as well.\\
In this section we will present such theorem, extending results obtained in \cite{CCRLV} for the case where one has a bounded fourth moment. The proofs associated with the local limit theorem are modelled on similar ideas to those presented in the above paper, but there are some significant changes, on which we will remark.\\
The main idea of the proof is to evaluate the supremum of the difference between the probability density functions using inversion formula and their characteristic functions. An integral will emerge, one we will have to divide into two domains: low and high frequencies. The domain of low frequencies will be taken cared of by requiring that the characteristic function would be in the NDA of some stable distribution. The high frequency domain is what we'll deal with presently.
\begin{theorem}\label{thm:high frequency}
Let $g$ be a probability density function on $\mathbb{R}$ such that 
\begin{equation}\label{eq: any moment bound on g}
E_\lambda=\int_{\mathbb{R}}|x|^\lambda g(x)dx<\infty,
\end{equation} 
for some $\lambda>0$, and \begin{equation}\label{eq: entropy bound on g}
H(g)=\int_{\mathbb{R}}g(x) \log g(x) dx<\infty.
\end{equation} 
Then for any $\beta>0$, there exists $\eta=\eta\left(\beta, H(g),E_\lambda \right)>0$ such that if $|\xi|>\beta$ then $|\widehat{g}(\xi)|\leq 1- \eta$. Moreover, given $\tau>0$ one can get the estimation 
\begin{equation}\label{eq: estimation of widehat(g) in the interval}
|\widehat{g}(\xi)| \leq 1- \beta^{2+\tau} + \phi_{\tau}(\beta),
\end{equation}
for $\beta<\beta_0$ small enough, where $\frac{\phi_\delta(\tau)}{\beta^{2+\tau}}\underset{\beta\rightarrow 0 }{\longrightarrow}0$. 
\end{theorem}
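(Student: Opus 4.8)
The plan is to bound $1-\abs{\widehat g(\xi)}$ from below, uniformly for $\abs\xi\ge\beta$, by exploiting the dichotomy: if this quantity is tiny, then $g$ must concentrate almost all of its mass on a set of tiny Lebesgue measure, which is impossible for a density of finite entropy whose tails are controlled by a moment. Concretely, writing $\widehat g(\xi)=\abs{\widehat g(\xi)}e^{i\theta_\xi}$ and taking real parts,
\[
1-\abs{\widehat g(\xi)}=\int_{\mathbb R}g(x)\pa{1-\cos(\xi x-\theta_\xi)}\,dx\ \ge\ 0 ,
\]
so it suffices to show this integral is $\ge\eta$ for a suitable $\eta=\eta(\beta,H(g),E_\lambda)>0$. (The mere existence of such an $\eta$ for fixed $g$ also follows from Riemann--Lebesgue, continuity of $\widehat g$, and $\abs{\widehat g}<1$ off the origin; but we want $\eta$ to depend only on $\beta,H(g),E_\lambda$, so a quantitative argument is needed.)

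Two auxiliary facts will be used. First, the single moment forces uniform integrability of the negative part of $g\log g$: since $-t\log t\le C_\varepsilon t^{1-\varepsilon}$ on $(0,1]$, splitting into $\abs x\le1$ and $\abs x>1$ and applying H\"older's inequality against $\abs x^\lambda$ with $\varepsilon<\lambda/(1+\lambda)$ gives $\int_{\{g<1\}}\abs{g\log g}\,dx\le K$ with $K=K(\lambda,E_\lambda)<\infty$; hence $\int_T g\log g\,dx\ge -K$ for every measurable $T\subseteq\mathbb R$. Second, fix an exponent $s\in(0,1)$ and suppose $1-\abs{\widehat g(\xi)}<\eta$. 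By Chebyshev's inequality applied to the nonnegative integrand above, the set $\br{x:1-\cos(\xi x-\theta_\xi)\ge\eta^{s}}$ carries $g$-mass $<\eta^{1-s}$; its complement is, by elementary trigonometry, a union of intervals of length $2\arccos(1-\eta^{s})/\abs\xi=O(\eta^{s/2}/\abs\xi)$ whose consecutive centres are $2\pi/\abs\xi$ apart, so its intersection with $[-R,R]$ has Lebesgue measure $\le C\pa{R/\pi+1/\beta}\eta^{s/2}$ \emph{uniformly in} $\abs\xi\ge\beta$. Combining with the tail bound $\int_{\abs x>R}g\,dx\le E_\lambda R^{-\lambda}$ and fixing $R=R(\beta,E_\lambda):=\max\br{1/\beta,(2E_\lambda)^{1/\lambda}}$, one obtains a set $S=S(\xi,\eta)$ with $\int_S g\,dx\ge\tfrac14$ (for $\eta$ small) and $\abs S\le C'(\beta,E_\lambda)\,\eta^{s/2}$.

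Now Jensen's inequality for the convex function $t\mapsto t\log t$ on $S$ yields $\int_S g\log g\,dx\ge\pa{\int_S g}\log\!\bigl(\tfrac1{\abs S}\int_S g\bigr)\ge\tfrac14\log\tfrac1{4\abs S}$, so with the first auxiliary fact
\[
H(g)\ \ge\ \tfrac14\log\frac1{4\abs S}-K\ \ge\ \frac{s}{8}\log\frac1\eta\;-\;c(\beta,E_\lambda)\;-\;K ,
\]
which is absurd once $\eta$ is small; rearranging gives the explicit bound $\eta_0\asymp\beta^{2/s}\,e^{-8(H(g)+K)/s}$, depending only on $\beta,H(g),E_\lambda,\lambda$. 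Taking any fixed $s\in(0,1)$, e.g. $s=\tfrac12$, proves the first assertion. For the refined estimate one takes $s=s(\tau)$ slightly below $2/(2+\tau)$, so that $\eta_0\gtrsim\beta^{2+\tau'}$ with $\tau'<\tau$, and then for $\beta<\beta_0(\tau)$ small the surplus power $\beta^{\tau-\tau'}$ absorbs the constant, giving $\abs{\widehat g(\xi)}\le1-\beta^{2+\tau}+\phi_\tau(\beta)$ with $\phi_\tau(\beta)/\beta^{2+\tau}\to0$.

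The main obstacle is to channel finite entropy into the estimate correctly. One must push the mass onto a set of small \emph{Lebesgue measure} — the sublevel set of $1-\cos(\xi\,\cdot-\theta_\xi)$ — and it is essential to note that, although for large $\abs\xi$ this set breaks into very many components, its total measure still tends to $0$ as the threshold does, uniformly for $\abs\xi\ge\beta$. The naive route — truncate $g$ at height $A$ on $[-R,R]$ and estimate $\int g\pa{1-\cos}$ over this ``core'' — fails, since the core's contribution is controllable only when $AR$ is small, whereas capturing almost all the mass forces both $A$ and $R$ to be large. The secondary technical point is the uniform integrability of $(g\log g)_-$: without it, the mass of $g$ lying outside $S$ could a priori drag the entropy to $-\infty$ and destroy the contradiction, and it is precisely the existence of one positive moment that rules this out.
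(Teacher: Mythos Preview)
Your argument is correct in spirit and follows the same architecture as the paper's proof: write $1-\abs{\widehat g(\xi)}=\int g(x)\bigl(1-\cos(\xi x-\theta_\xi)\bigr)\,dx$, localise to $[-R,R]$ using the moment, observe that the sublevel set of $1-\cos$ has small Lebesgue measure in $[-R,R]$ uniformly for $\abs\xi\ge\beta$, and then invoke finite entropy to rule out concentration of $g$ on that small set. The paper carries out this last step via the relative-entropy inequality
\[
\mu(B)\le \frac{2H(\mu\mid\nu)}{\log\bigl(1+H(\mu\mid\nu)/\nu(B)\bigr)}
\]
applied to $\mu=g|_{[-R,R]}$ (normalised) and $\nu$ uniform on $[-R,R]$, and then optimises with $\delta=\beta^{2+\tau}$, $R=-\log\beta$. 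You replace this by a direct Jensen inequality $\int_S g\log g\ge\bigl(\int_S g\bigr)\log\bigl(\tfrac{1}{\abs S}\int_S g\bigr)$ on the small set $S$, which is a bit more elementary and avoids quoting the relative-entropy bound; you also introduce an auxiliary Chebyshev parameter $s$ and take $R\sim 1/\beta$ rather than $R\sim-\log\beta$. Both routes yield $\eta_0$ behaving like a positive power of $\beta$.

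One genuine slip: in the final paragraph you write ``take $s$ slightly \emph{below} $2/(2+\tau)$'', but this gives $2/s>2+\tau$, hence $\eta_0\asymp c_s\beta^{2/s}=c_s\beta^{2+\tau'}$ with $\tau'>\tau$, the wrong direction for your next sentence. You need $s$ slightly \emph{above} $2/(2+\tau)$ (which is admissible since $2/(2+\tau)<1$ for $\tau>0$); then $\tau'=2/s-2<\tau$ and, as you say, the surplus factor $\beta^{\tau'-\tau}\to\infty$ absorbs the constant $c_s$ for $\beta$ small. With that correction the refined estimate goes through, in fact with $\phi_\tau\equiv 0$ for $\beta<\beta_0(\tau,g)$.
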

\begin{remark}\label{rem: high frequencies changes}
The proof of the first part of the above theorem, to be presented shortly, is very similar to the proof found in \cite{CCRLV}. The novelty of our approach manifests itself mainly in (\ref{eq: estimation of widehat(g) in the interval}), where an explicit distance from $1$ is given. The surprising part is that to show this estimation no new machinery is required, only an intermediate approximation. 
\end{remark}
\begin{proof}
For a given $\xi\in \mathbb{R}$ we can find a $z\in\mathbb{R}$ such that 
\[|\widehat{g}(\xi)|=\widehat{g}(\xi)e^{-2\pi i \xi z}.\]
By the definition of the Fourier transform, and the fact that $\widehat{g}(0)=1$, we have that
\[|\widehat{g}(\xi)|=\int_{\mathbb{R}}g(x)e^{-2\pi i (x+z) \xi}dx=1-\int_{\mathbb{R}}g(x)\left(1-e^{-2\pi i (x+z) \xi}\right)dx.\]
Since $|\widehat{g}|$ is real we find that
\begin{equation}\label{eq: intital estimation of g-hat}
\begin{gathered}
|\widehat{g}(\xi)|=1-\int_{\mathbb{R}}g(x)\left(1-\cos\left(2\pi (x+z) \xi\right)\right)dx \\
\leq 1-\int_{B}g(x)\left(1-\cos\left(2\pi (x+z) \xi\right)\right)dx
\end{gathered}
\end{equation}
for any measurable set $B$.\\
Define: 
\[B_{\delta,R}=\left\lbrace x\in[-R,R] \; | \; 1-\cos\left(2\pi(z+x)\xi\right)\leq \delta  \right\rbrace,\]
where $\delta$ and $R$ are to be specified later. From its definition, and (\ref{eq: intital estimation of g-hat}), we conclude that
\begin{equation}\label{eq: second estimation of g-hat}
\begin{gathered}
|\widehat{g}(\xi)|\leq 1-\int_{[-R,R]\setminus B_{\delta,R}}g(x)\left(1-\cos\left(2\pi (x+z) \xi\right)\right)dx 
\\
\leq 1-\delta\int_{[-R,R]\setminus B_{\delta,R}}g(x)dx.  
\end{gathered}
\end{equation}
Next we notice that $x\in  B_{\delta,R}$ if and only if $x\in[-R,R]$ and 
\[|2\pi(z+x)\xi+2\pi k|\leq \arccos(1-\delta)\]
for some $k\in\mathbb{Z}$. Since $\arccos(1-\delta)\leq \sqrt{2\delta}$ 
we conclude that if $x\in B_{\delta,R}$ then, for some $k\in\mathbb{Z}$, 
\begin{equation}\label{eq: interval estimation}
\left\lvert x-\left( \frac{k}{\xi}-z \right) \right\rvert \leq \frac{\sqrt{2\delta}}{2\pi|\xi|}.
\end{equation}
We denote by $I_k$ the closed intervals centred in $\frac{k}{\xi}-z$, with radius $\frac{\sqrt{2\delta}}{2\pi\abs{\xi}}$. Since the distance between the centres of any two $I_k-$s is at least $\frac{1}{|\xi|}$, while the length of each interval is at most $\frac{1}{\pi|\xi|}$, if we pick $\delta<\frac{1}{2}$, we conclude that the intervals $I_k-$s are mutually disjoint.\\ 
From (\ref{eq: interval estimation}) we see that the set $B_{\delta,R}$ is contained in a union of $I_k-$s.\\
Let $n$ be the number of $k\in\mathbb{Z}$ such that $\frac{k}{\xi}-z\in[-R,R]$. All such $k-$s, but possibly the biggest and smallest $k$, satisfy that $I_k \subset [-R,R]$. Thus,
\[(n-2)\cdot \frac{1}{\pi|\xi|} \leq \sum_{I_k \subset [-R,R]} |I_k|\leq 2R.\]
With $\abs{\cdot}$ denoting the Lebesgue measure, we conclude that
\begin{equation}\label{eq: lebesgue measure of B}
\begin{gathered}
\abs{ B_{\delta,R}}\leq n\cdot \frac{\sqrt{2\delta}}{\pi|\xi|}\leq \left(2R+\frac{2}{\pi |\xi|}\right)\cdot \sqrt{2\delta}\leq 2R\left(1+\frac{1}{R\beta}\right)\cdot \sqrt{2\delta}.
\end{gathered}
\end{equation}
At this point we will use the entropy and moment condition on $g$ to connect between the known value $\abs{B_{\delta,R}}$ and the desired value $\int_{B_{\delta,R}}g(x)dx$. To do that we will use the relative entropy (see Definition \ref{def: relative entropy }) and the following known inequality: 
\begin{equation}\label{eq: relative entropy inequality}
\mu(B)\leq \frac{2H(\mu | \nu)}{\log\left(1+\frac{H(\mu | \nu)}{\nu(B)} \right)},  
\end{equation}
where $\mu$ and $\nu$ are regular probability measure on $\mathbb{R}$ and $B$ is a measurable set.\\
Define 
\begin{equation}\label{eq: definition of the measures for relative entropy}
d\mu(x)=\frac{\chi_{[-R,R]}(x)g(x)}{\int_{[-R,R]} g(x)dx}dx, \quad d\nu(x)=\frac{\chi_{[-R,R]}(x)}{2R}dx.
\end{equation} 
We have that $\frac{d\mu}{d\nu}(x)=\frac{2R \chi_{[-R,R]}(x)g(x)}{\int_{[-R,R]} g(x)dx}$ and
\begin{equation}\label{eq: high freq estimation I}
\begin{gathered}
H(\mu | \nu)=\int_{[-R,R]} \log\left( \frac{2R g(x)}{\int_{[-R,R]}g(x)dx} \right) \frac{g(x)}{\int_{[-R,R]}g(x)dx}dx\\
=\log(2R)-\log \left(\int_{[-R,R]}g(x)dx \right)+\frac{1}{\int_{[-R,R]}g(x)dx}\int_{[-R,R]}g(x)\log g(x) dx\\
\leq \log(2R)-\log \left(1- \frac{E_\lambda}{R^\lambda} \right)+\frac{1}{1- \frac{E_\lambda}{R^\lambda}}\int g(x)|\log g(x)| dx.
\end{gathered}
\end{equation}
We have used the fact that
\begin{equation}\label{eq: high freq estimation II}
\int_{[-R,R]}g(x)dx=1-\int_{|x|>R}g(x)dx \geq 1-\frac{1}{R^\lambda}\int_{|x|>R}|x|^\lambda g(x)dx \geq 1- \frac{E_\lambda}{R^\lambda}.
\end{equation}
We will now turn our attention to the term $\int g(x)\abs{\log (g(x))}dx$. For \textit{any} positive function $\psi(x)$, we have that 
\begin{equation}\nonumber
\psi(x)\left(\frac{g(x)}{\psi(x)}\log \left(\frac{g(x)}{\psi(x)} \right) -\frac{g(x)}{\psi(x)}+1\right)\geq 0.
\end{equation} 
Thus, for any measurable set $A$ we have that
\[\int_A g(x)\log g(x)dx \geq \int_A g(x)\log \psi(x)dx+\int_A g(x) - \int_A \psi(x)dx,\]
when the right hand side is finite. Choosing  $\psi(x)=e^{-|x|^\lambda}$ and $A=\left\lbrace g<1 \right\rbrace$ we find that 
\begin{equation}\label{eq: entropy bound on g<1}
\begin{gathered}
\left\lvert \int_{g<1} g(x)\log g(x)dx \right\rvert=- \int_{g<1} g(x)\log g(x) \\
\leq \int_{g<1} |x|^\lambda g(x)dx-\int_{g<1}g(x)dx + \int_{g<1}\psi(x)dx < E_\lambda+ C_\lambda. 
\end{gathered}
\end{equation}
where $C_\lambda=\int \psi(x)dx$. Since
\begin{equation}\nonumber
\int g(x)|\log(g(x)|=H(g)-2\int_{g<1}g(x)\log g(x)dx.
\end{equation} 
we conclude that
\begin{equation}\label{eq: relative entropy bound}
H(\mu | \nu) \leq \log(2R)-\log \left(1- \frac{E_\lambda}{R^\lambda} \right)+\frac{H(g)+2E_\lambda+2C_\lambda}{1- \frac{E_\lambda}{R^\lambda}}.
\end{equation}
Together with (\ref{eq: lebesgue measure of B}) and (\ref{eq: relative entropy inequality}) we find that
\begin{equation}\label{eq: mu(B_delta,R)}
\mu(B_{\delta,R})\leq \frac{2\log(2R)-2\log \left(1- \frac{E_\lambda}{R^\lambda} \right)+\frac{2H(g)+4E_\lambda+4C_\lambda}{1- \frac{E_\lambda}{R^\lambda}}}
{\log\left(1+\frac{\log(2R)-\log \left(1- \frac{E_\lambda}{R^\lambda} \right)+\frac{H(g)+2E_\lambda+2C_\lambda}{1- \frac{E_\lambda}{R^\lambda}}}{2R\left( 1+\frac{1}{R\beta} \right)\sqrt{2\delta}} \right)}.
\end{equation}
Next, we notice that
\[\int_{[-R,R]\setminus B_{\delta,R}} g(x)dx=\left(\int_{[-R,R]} g(x)dx\right)\mu\left([-R,R] \setminus B_{\delta,R} \right) \geq \left(1-\frac{E_\lambda}{R^\lambda} \right)\left(1-\mu(B_{\delta,R}) \right)\]
which, along with (\ref{eq: second estimation of g-hat}) and (\ref{eq: mu(B_delta,R)}) gives us the following control:
\begin{equation}\label{eq: final result unpolished}
\begin{gathered}
|\widehat{g}(\xi)|\leq 1-\delta\cdot \left(1-\frac{E_\lambda}{R^\lambda} \right)\left(1-\frac{2\log(2R)-2\log \left(1- \frac{E_\lambda}{R^\lambda} \right)+\frac{2H(g)+4E_\lambda+4C_\lambda}{1- \frac{E_\lambda}{R^\lambda}}}
{\log\left(1+\frac{\log(2R)-\log \left(1- \frac{E_\lambda}{R^\lambda} \right)+\frac{H(g)+2E_\lambda+2C_\lambda}{1- \frac{E_\lambda}{R^\lambda}}}{2R\left( 1+\frac{1}{R\beta} \right)\sqrt{2\delta}} \right)} \right)
\end{gathered}
\end{equation}
At this point we can choose $R$ and $\delta<\frac{1}{2}$ appropriately. For any $\tau>0$ we choose $\delta=\beta^{2+\tau}$ and $R=-\log \beta$ we find that for $\beta$ going to zero
\[\frac{2\log(2R)-2\log \left(1- \frac{E_\lambda}{R^\lambda} \right)+\frac{2H(g)+4E_\lambda+4C_\lambda}{1- \frac{E_\lambda}{R^\lambda}}}
{\log\left(1+\frac{\log(2R)-\log \left(1- \frac{E_\lambda}{R^\lambda} \right)+\frac{H(g)+2E_\lambda+2C_\lambda}{1- \frac{E_\lambda}{R^\lambda}}}{2R\left( 1+\frac{1}{R\beta} \right)\sqrt{2\delta}} \right)} \approx \frac{2\log (-\log (\beta))}{\log \left(1+\frac{\log(-\log(\beta))}{-2\sqrt{2}\beta^{1+\frac{\tau}{2}}\log(\beta)+2\sqrt{2}\beta^{\frac{\tau}{2}}}\right)}\]
\[\approx \frac{2\log (-\log (\beta))}{\log(\log(-\log (\beta)))-\frac{\tau}{2}\cdot\log (\beta)}\underset{\beta\rightarrow 0}{\longrightarrow}0.\]
Thus, 
\[|\widehat{g}(\xi)| \leq 1- \beta^{2+\tau} + \phi_\tau(\beta),\]
where $\frac{\phi_\tau(\beta)}{\beta^{2+\tau}}\underset{\beta\rightarrow 0 }{\longrightarrow}0$. 
\end{proof}
Before we state and prove our main L\'evy central limit theorem, we state a simple technical lemma, one that will be proven in the appendix. A similar argument can be found in \cite{GJT}.   
\begin{lemma}\label{lem: g in NDA implies g is exponential at low frequencies}
Let $\widehat{g}$ be the characteristic function of a random real variable $X$ that is in the NDA of $\widehat{\gamma}_{\sigma,\alpha,\beta}$. Then there exists $\beta_0>0$ such that for all $|\xi|<\beta_0$ we have that
\begin{equation}\label{eq: g in NDA implies g is exponential at low frequencies}
\abs {\widehat{g}(\xi)} \leq e^{-\frac{\sigma\abs{\xi}^{\alpha}}{2}}.
\end{equation}
\end{lemma}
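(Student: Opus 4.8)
The plan is to exploit the identification of the NDA with the FDA established in Theorem \ref{thm: FDA and NDA}. Since $\widehat g$ is the characteristic function of a variable in the NDA of $\widehat\gamma_{\sigma,\alpha,\beta}$, it lies in the FDA, so we may write
\begin{equation}\nonumber
\widehat g(\xi)=1-\sigma\abs{\xi}^\alpha\pa{1+i\beta\,\text{sgn}(\xi)\tan\pa{\tfrac{\pi\alpha}{2}}}+\eta(\xi),
\end{equation}
where $\eta(\xi)/\abs{\xi}^\alpha$ is bounded near $0$ and tends to $0$ as $\xi\to 0$. First I would split $\eta=\eta_R+i\eta_I$ into real and imaginary parts and compute $\abs{\widehat g(\xi)}^2$ explicitly as the sum of the squares of the real and imaginary parts of the right-hand side: the real part is $1-\sigma\abs{\xi}^\alpha+\eta_R(\xi)$ and the imaginary part is $-\sigma\abs{\xi}^\alpha\beta\,\text{sgn}(\xi)\tan(\pi\alpha/2)+\eta_I(\xi)$.

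Second, I would collect the terms by their order in $\abs{\xi}$ as $\xi\to 0$. The dominant contributions are $1$ and $-2\sigma\abs{\xi}^\alpha$ (the latter being the cross term $2\cdot 1\cdot(-\sigma\abs{\xi}^\alpha)$ coming from squaring the real part), while every remaining term — the squares $\sigma^2\abs{\xi}^{2\alpha}$ and $\sigma^2\abs{\xi}^{2\alpha}\beta^2\tan^2(\pi\alpha/2)$, the products of $\eta_R,\eta_I$ with the $O(\abs{\xi}^\alpha)$ terms, the squares $\eta_R^2,\eta_I^2$, and the linear-in-$\eta$ term $2\eta_R$ — is $o(\abs{\xi}^\alpha)$; here one uses $\sigma>0$, $0<\alpha<2$ (so that $\abs{\xi}^{2\alpha}=o(\abs{\xi}^\alpha)$ near $0$), and the defining property $\eta(\xi)=o(\abs{\xi}^\alpha)$. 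This yields
\begin{equation}\nonumber
\abs{\widehat g(\xi)}^2=1-2\sigma\abs{\xi}^\alpha+o(\abs{\xi}^\alpha),\qquad \xi\to 0.
\end{equation}

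Third, from this asymptotic expansion there is $\beta_0>0$ such that $\abs{\widehat g(\xi)}^2\le 1-\sigma\abs{\xi}^\alpha$ for all $\abs{\xi}<\beta_0$; combining with the elementary inequality $1-t\le e^{-t}$ and taking square roots gives $\abs{\widehat g(\xi)}\le e^{-\sigma\abs{\xi}^\alpha/2}$ on $\abs{\xi}<\beta_0$, which is exactly the claim. There is no genuine obstacle here: the only point requiring care is the bookkeeping of the orders of magnitude of the error terms, together with the observation that $\abs{\widehat g(\xi)}$ is controlled through $\abs{\widehat g(\xi)}^2$ once the latter drops below $1$. One could alternatively work with $\log\abs{\widehat g}$, but passing through $\abs{\widehat g}^2$ is cleaner and sidesteps any issue near points where $\widehat g$ might vanish (which, in any case, do not occur in the small-$\xi$ regime by the same expansion).
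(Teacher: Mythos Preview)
Your proof is correct and follows essentially the same approach as the paper: both start from the FDA expansion granted by Theorem~\ref{thm: FDA and NDA}, isolate the leading $-\sigma\abs{\xi}^\alpha$ contribution, absorb all remaining terms into an $o(\abs{\xi}^\alpha)$ remainder, and finish with $1-t\le e^{-t}$. The only cosmetic difference is that you compute $\abs{\widehat g(\xi)}^2$ directly and take a square root at the end, whereas the paper first rewrites $\widehat g$ as $\widehat\gamma_{\sigma,\alpha,\beta}$ plus an $o(\abs{\xi}^\alpha)$ error and then bounds the modulus; your route is arguably tidier since it avoids passing through the exponential form and back.
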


\begin{theorem}\label{thm: main approximation themroem}
Let $g$ be the probability density function of a random real variable $X$. Assume that $g\in L^{p}(\mathbb{R})$ for some $p>1$ and $g$ is in the NDA of $\gamma_{\sigma,\alpha,\beta}$ for some $\sigma>0$, $\beta$ and $1<\alpha<2$. Assume in addition that $g$ has finite moment of some order. Define 
\begin{equation}\nonumber
g_N(x)=N^{\frac{1}{\alpha}}g^{\ast N}\left(N^{\frac{1}{\alpha}}x \right),
\end{equation}
and 
\begin{equation}\label{eq: def of gamma_sigma}
\gamma_{\sigma,\alpha,\beta}(x)=\frac{1}{2\pi}\int_{\mathbb{R}}\widehat{\gamma}_{\sigma,\alpha,\beta}(\xi)e^{i \xi x}d\xi.
\end{equation}
Then, for any positive sequence $\br{\beta_N}_{N\rightarrow\infty}$ that converges to zero as $N$ goes to infinity, any $\tau>0$ and $N$ large enough we have that
\begin{equation}\label{eq: main approxiamtion equation}
\begin{gathered}
\Norm {g_N-\gamma_{\sigma,\alpha,\beta}}_\infty \leq C_{g,\alpha}\Bigg(N^{\frac{1}{\alpha}}(1-\beta^{2+\tau}_N+\phi_\tau(\beta_N))^{N-q} 
+e^{-\frac{\sigma N\beta_N^\alpha}{2}} \\
+ \omega_{\eta}(\beta_N)+2\sigma \beta_N^\alpha \pa{1+\beta^2 \tan^2\pa{\frac{\pi\alpha}{2}}}\Bigg)=\epsilon_{\tau}(N),
\end{gathered}
\end{equation}
where 
\begin{enumerate}[(i)]
\item $C_{g,\alpha}>0$ is a constant depending only on $g$, its moments and $\alpha$.
\item  $q$ can be chosen to be the H\"older conjugate of $\min(2,p)$.
\item  $\phi_{\tau}$ satisfies
\begin{equation}\nonumber
\lim_{x\rightarrow 0}\frac{\phi_{\tau}(x)}{|x|^{2+\tau}}=0,
\end{equation}
\item $\eta$ is the reminder function of $\widehat{g}$, defined in Definition \ref{def: FDA}, and $\omega_\eta(\beta)=\sup_{|x|\leq \beta}\frac{\abs{\eta(x)}}{\abs{x}^\alpha}$.
\end{enumerate}
\end{theorem}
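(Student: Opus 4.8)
The plan is to transfer everything to the Fourier side via the inversion formula and to split the resulting integral into a high-- and a low--frequency part, the former controlled by Theorem~\ref{thm:high frequency} and the latter by the \emph{FDA} representation of $\widehat g$ supplied by Theorem~\ref{thm: FDA and NDA} together with Lemma~\ref{lem: g in NDA implies g is exponential at low frequencies}. First I would check that the inversion formula is legitimate: interpolating between $L^1$ and $L^p$ gives $g\in L^{\min(2,p)}(\mathbb{R})$, so by Hausdorff--Young $\widehat g\in L^{q}(\mathbb{R})$ with $q$ the conjugate exponent of $\min(2,p)$; since $\abs{\widehat g}\le1$ everywhere and, by Theorem~\ref{thm:high frequency}, $\abs{\widehat g(\xi)}$ is bounded away from $1$ whenever $\abs\xi$ is bounded away from $0$, one gets $\abs{\widehat g}^{N}\le\abs{\widehat g}^{q}\in L^1$ for $N\ge q$, hence $\widehat{g_N}(\xi)=\widehat g\pa{N^{-1/\alpha}\xi}^{N}\in L^1(\mathbb{R})$, $g_N$ is continuous, and the same holds for $\gamma_{\sigma,\alpha,\beta}$ since $\abs{\widehat\gamma_{\sigma,\alpha,\beta}(\xi)}=e^{-\sigma\abs\xi^\alpha}$. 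Fourier inversion, the substitution $\zeta=N^{-1/\alpha}\xi$ and the scaling identity $\widehat\gamma_{\sigma,\alpha,\beta}\pa{N^{1/\alpha}\zeta}=\widehat\gamma_{\sigma,\alpha,\beta}(\zeta)^{N}$ then give
\begin{equation}\nonumber
\Norm{g_N-\gamma_{\sigma,\alpha,\beta}}_\infty\le\frac{N^{1/\alpha}}{2\pi}\int_{\mathbb{R}}\abs{\widehat g(\zeta)^{N}-\widehat\gamma_{\sigma,\alpha,\beta}(\zeta)^{N}}\,d\zeta .
\end{equation}
Fix $\tau>0$ and, given $\beta_N\to0$, take $N$ so large that $\beta_N<\beta_0$ (so that \eqref{eq: estimation of widehat(g) in the interval} and \eqref{eq: g in NDA implies g is exponential at low frequencies} apply), $N\ge q$, $\omega_\eta(\beta_N)\le\sigma/2$ and $\sigma\beta_N^\alpha\le1$; then split the integral at $\abs\zeta=\beta_N$.

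On $\abs\zeta>\beta_N$ I would bound the integrand by $\abs{\widehat g(\zeta)}^{N}+\abs{\widehat\gamma_{\sigma,\alpha,\beta}(\zeta)}^{N}$. For the first term \eqref{eq: estimation of widehat(g) in the interval} gives $\abs{\widehat g(\zeta)}\le1-\beta_N^{2+\tau}+\phi_\tau(\beta_N)$ there, so writing $\abs{\widehat g}^{N}=\abs{\widehat g}^{N-q}\abs{\widehat g}^{q}$ yields $\int_{\abs\zeta>\beta_N}\abs{\widehat g}^{N}\le\pa{1-\beta_N^{2+\tau}+\phi_\tau(\beta_N)}^{N-q}\Norm{\widehat g}_{q}^{q}$. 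For the second, from $N\sigma\abs\zeta^\alpha\ge\tfrac{N\sigma}{2}\beta_N^\alpha+\tfrac{N\sigma}{2}\abs\zeta^\alpha$ on $\abs\zeta>\beta_N$ one gets $\int_{\abs\zeta>\beta_N}e^{-N\sigma\abs\zeta^\alpha}\,d\zeta\le C_\alpha(N\sigma)^{-1/\alpha}e^{-\frac{\sigma N\beta_N^\alpha}{2}}$. Multiplying by $N^{1/\alpha}/(2\pi)$ produces, up to constants depending only on $g$ and $\alpha$, the first two terms of $\epsilon_\tau(N)$.

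The heart of the matter is $\abs\zeta\le\beta_N$. By Theorem~\ref{thm: FDA and NDA}, since $g$ lies in the NDA it lies in the FDA of $\widehat\gamma_{\sigma,\alpha,\beta}$, so
\begin{equation}\nonumber
\widehat g(\zeta)=1+v(\zeta)+\eta(\zeta),\qquad v(\zeta)=-\sigma\abs\zeta^\alpha\pa{1+i\beta\,\text{sgn}(\zeta)\tan\pa{\tfrac{\pi\alpha}{2}}},
\end{equation}
with reminder function $\eta$ obeying $\abs{\eta(\zeta)}\le\omega_\eta(\beta_N)\abs\zeta^\alpha$ on $\abs\zeta\le\beta_N$, while $\widehat\gamma_{\sigma,\alpha,\beta}(\zeta)=e^{v(\zeta)}$. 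Using the elementary bound $\abs{z^{N}-w^{N}}\le N\abs{z-w}\max(\abs z,\abs w)^{N-1}$ together with Lemma~\ref{lem: g in NDA implies g is exponential at low frequencies}, which gives $\max(\abs{\widehat g(\zeta)},\abs{\widehat\gamma_{\sigma,\alpha,\beta}(\zeta)})\le e^{-\sigma\abs\zeta^\alpha/2}$, the problem reduces to estimating $\abs{\widehat g(\zeta)-\widehat\gamma_{\sigma,\alpha,\beta}(\zeta)}$. Writing $\widehat g-\widehat\gamma_{\sigma,\alpha,\beta}=\eta-\pa{e^{v}-1-v}$ and using $\abs{e^{v}-1-v}\le\tfrac12\abs v^{2}e^{\abs v}$, $\abs{v(\zeta)}=\sigma\abs\zeta^\alpha\pa{1+\beta^2\tan^2\pa{\tfrac{\pi\alpha}{2}}}^{1/2}$ and $\abs\zeta^{2\alpha}\le\beta_N^\alpha\abs\zeta^\alpha$, I get on $\abs\zeta\le\beta_N$
\begin{equation}\nonumber
\abs{\widehat g(\zeta)-\widehat\gamma_{\sigma,\alpha,\beta}(\zeta)}\le\pa{\omega_\eta(\beta_N)+C\,\sigma\beta_N^\alpha\pa{1+\beta^2\tan^2\pa{\tfrac{\pi\alpha}{2}}}}\abs\zeta^\alpha .
\end{equation}
Hence the low--frequency contribution is at most $\frac{N^{1+1/\alpha}}{2\pi}\pa{\omega_\eta(\beta_N)+C\sigma\beta_N^\alpha\pa{1+\beta^2\tan^2\pa{\tfrac{\pi\alpha}{2}}}}\int_{\mathbb{R}}\abs\zeta^\alpha e^{-\frac{(N-1)\sigma}{2}\abs\zeta^\alpha}\,d\zeta$, and since $\int_{\mathbb{R}}\abs\zeta^\alpha e^{-a\abs\zeta^\alpha}\,d\zeta=C_\alpha a^{-1-1/\alpha}$ the factor $N^{1+1/\alpha}$ is exactly compensated, leaving (for $N$ large) $\le C_{g,\alpha}\pa{\omega_\eta(\beta_N)+2\sigma\beta_N^\alpha\pa{1+\beta^2\tan^2\pa{\tfrac{\pi\alpha}{2}}}}$. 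Adding the two régimes and absorbing all numerical and $(g,\alpha)$--dependent constants into one $C_{g,\alpha}$ gives \eqref{eq: main approxiamtion equation}; the hypothesis that $g$ has a finite moment, together with $g\in L^p$, is exactly what yields $H(g)<\infty$ and hence the applicability of Theorem~\ref{thm:high frequency}, while membership in the NDA is what activates Theorem~\ref{thm: FDA and NDA} and Lemma~\ref{lem: g in NDA implies g is exponential at low frequencies}.

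The main obstacle is the low--frequency estimate: the telescoping bound for $z^{N}-w^{N}$ costs a factor $N$ and the initial rescaling has already introduced a factor $N^{1/\alpha}$, so a total of $N^{1+1/\alpha}$ must be absorbed. This is possible only because of the decay $e^{-(N-1)\sigma\abs\zeta^\alpha/2}$ provided by Lemma~\ref{lem: g in NDA implies g is exponential at low frequencies}, which in turn rests on the uniform control of $\eta(\zeta)/\abs\zeta^\alpha$ near $\zeta=0$, i.e., on $g$ belonging to the NDA of $\widehat\gamma_{\sigma,\alpha,\beta}$ (this is precisely the point where the analysis ``without high moments'' departs from the Gaussian case). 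A secondary, more bookkeeping difficulty is making the inversion formula rigorous, which forces the appearance of the Hausdorff--Young exponent $q$ and the use of Theorem~\ref{thm:high frequency} to push $\abs{\widehat g}^{N-q}$ below $1$ over the whole high--frequency range rather than merely near the cutoff.
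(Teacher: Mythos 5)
Your proposal is correct and follows essentially the same route as the paper: Fourier inversion justified via Hausdorff--Young (yielding the exponent $q$), a low/high frequency split at $\beta_N$ in the rescaled variable, the difference-of-$N$-th-powers bound combined with Lemma \ref{lem: g in NDA implies g is exponential at low frequencies} and the FDA representation for the low frequencies, and Theorem \ref{thm:high frequency} plus direct integration of $e^{-\sigma N\abs{\zeta}^\alpha}$ for the high frequencies. The only (harmless) cosmetic differences are that you use the $\max$-form of the power-difference inequality where the paper writes out the telescoping sum, and you are slightly more explicit than the paper in bounding $e^{v}-1-v$ to control the remainder of $\widehat{\gamma}_{\sigma,\alpha,\beta}$.
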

The proof of Theorem \ref{thm: main approximation themroem} is similar in nature to proofs presented in \cite{CCRLV,GJT}, yet there are some differences. The main one is the explicit estimation, per $N$, of the distance between $g_N(x)$ and $\gamma_{\sigma,\alpha,\beta}$. 
\begin{proof}
We start by noticing that 
\begin{equation}\nonumber
\widehat{g_N}(\xi)=\widehat{g}^N \pa{\frac{\xi}{N^{\frac{1}{\alpha}}}},
\end{equation}
and from the inversion formula for characteristic functions (see \cite{Feller}) we have that $\widehat{\gamma}_{\sigma,\alpha,\beta}$ is the characteristic function of $\gamma_{\sigma,\alpha,\beta}$.\\
Since $g\in L^1 (\mathbb{R})\cap L^{p}(\mathbb{R})$ we conclude that $g\in L^{p^\prime}(\mathbb{R})$ for any $1\leq p^{\prime}\leq p$. Thus, its characteristic function belongs to some $L^q(\mathbb{R})$ for some $q>1$. One can choose $q$ to be the H\"older conjugate of $\min(2,p)$. For any $N>q$ we have that
\begin{equation}\nonumber
\int_{\mathbb{R}}\abs{\widehat{g_N}(\xi)}d\xi \leq \Norm{\widehat{g}}^{N-q}_\infty \int_{\mathbb{R}}\abs{\widehat{g}\pa{\frac{\xi}{N^{\frac{1}{\alpha}}}}}^q d\xi \leq N^{\frac{1}{\alpha}}\Norm{\widehat{g}}^q_{L^q}<\infty.
\end{equation}
This implies that we can use the inversion formula for $g$, and as such, for any $x\in\mathbb{R}$:
\begin{equation}\label{eq: approximation heorem, low frequency I}
\begin{gathered}
\abs{g_N(x)-\gamma_{\sigma,\alpha,\beta}(x)} \leq \frac{1}{2\pi}\int_{\mathbb{R}}\abs{\widehat{g}^N \pa{\frac{\xi}{N^{\frac{1}{\alpha}}}}-\widehat{\gamma}_{\sigma,\alpha,\beta}(\xi)}d\xi \\
=\frac{1}{2\pi}\int_{\mathbb{R}}\abs{\widehat{g}^N \pa{\frac{\xi}{N^{\frac{1}{\alpha}}}}-\widehat{\gamma}^N_{\sigma,\alpha,\beta}\pa{\frac{\xi}{N^{\frac{1}{\alpha}}}}}d\xi \\
\leq \frac{1}{2\pi}\int_{\abs{\xi}<\beta_N N^{\frac{1}{\alpha}}}\abs{\widehat{g}^N \pa{\frac{\xi}{N^{\frac{1}{\alpha}}}}-\widehat{\gamma}^N_{\sigma,\alpha,\beta}\pa{\frac{\xi}{N^{\frac{1}{\alpha}}}}}d\xi \\
\frac{1}{2\pi}\int_{\abs{\xi}>\beta_N N^{\frac{1}{\alpha}}}\abs{\widehat{g}^N \pa{\frac{\xi}{N^{\frac{1}{\alpha}}}}}d\xi+\frac{1}{2\pi}\int_{\abs{\xi}>\beta_N N^{\frac{1}{\alpha}}}\abs{\widehat{\gamma}_{\sigma,\alpha,\beta}(\xi)}d\xi \\
=I_1+I_2+I_3.
\end{gathered}
\end{equation}
The partition in (\ref{eq: approximation heorem, low frequency I}) corresponds to the low-high frequencies domains we referred to at the beginning of the section. We wil start with estimating $I_1$.\\
Since $\widehat{g}$ is in the NDA of $\widehat{\gamma}_{\sigma,\alpha,\beta}$, Theorem \ref{thm: FDA and NDA} assures us that $\widehat{g}$ is in the FDA of $\widehat{\gamma}_{\sigma,\alpha,\beta}$ and there exists a reminder function, $\eta$, such that
\begin{equation}\label{eq: approximation heorem, low frequency II}
\abs{\widehat{g}(\xi)-\widehat{\gamma}_{\sigma,\alpha,\beta}(\xi)}=\abs{\eta(\xi)}+\abs{\eta_\gamma(\xi)},
\end{equation}
with 
\begin{equation}\label{eq: approximation heorem, low frequency III}
\abs{\eta_\gamma(\xi)} \leq 2 \sigma^2 \abs{\xi}^{2\alpha}\pa{1+\beta^2 \tan^2\pa{\frac{\pi\alpha}{2}}}
\end{equation}
when $\abs{\xi}<\beta_1$ for some small $\beta_1>0$. Thus,
\begin{equation}\label{eq: approximation heorem, low frequency IV}
\sup_{|\zeta|<\beta_N}\frac{\abs{\widehat{g}(\zeta)-\widehat{\gamma}_{\sigma,\alpha,\beta}(\zeta)}}{\abs{\zeta}^\alpha} \leq \omega_{\eta}(\beta_N) + 2\sigma \beta_N^\alpha \pa{1+\beta^2 \tan^2\pa{\frac{\pi\alpha}{2}}}
\end{equation}
for $N$ large enough such that $\beta_N<\beta_1$.\\
Next, we see that
\begin{equation}\label{eq: approximation heorem, low frequency V}
\begin{gathered}
\abs{\widehat{g}^N \pa{\frac{\xi}{N^{\frac{1}{\alpha}}}}-\widehat{\gamma}^N_{\sigma,\alpha,\beta}\pa{\frac{\xi}{N^{\frac{1}{\alpha}}}}} \\
\leq  \abs{\widehat{g} \pa{\frac{\xi}{N^{\frac{1}{\alpha}}}}-\widehat{\gamma}_{\sigma,\alpha,\beta}\pa{\frac{\xi}{N^{\frac{1}{\alpha}}}}}\sum_{k=0}^{N-1}\abs{\widehat{g}\pa{\frac{\xi}{N^{\frac{1}{\alpha}}}}}^k \abs{\widehat{\gamma}_{\sigma,\alpha,\beta}\pa{\frac{\xi}{N^{\frac{1}{\alpha}}}}}^{N-1-k}.
\end{gathered}
\end{equation}
Picking $N$ such that $\frac{\abs{\xi}}{N^{\frac{1}{\alpha}}}<\beta_N<\beta_0$ from Lemma \ref{lem: g in NDA implies g is exponential at low frequencies} we find that
\begin{equation}\label{eq: approximation heorem, low frequency VI}
\begin{gathered}
\sum_{k=0}^{N-1}\abs{\widehat{g}\pa{\frac{\xi}{N^{\frac{1}{\alpha}}}}}^k \abs{\widehat{\gamma}_{\sigma,\alpha,\beta}\pa{\frac{\xi}{N^{\frac{1}{\alpha}}}}}^{N-1-k}
\leq \sum_{k=0}^{N-1}e^{-\frac{\sigma k \abs{\xi}^\alpha}{2N}}\cdot e^{-\frac{\sigma (N-k-1)\abs{\xi}^\alpha}{N} } \\
\leq N e^{-\frac{\sigma (N-1)\abs{\xi}^\alpha}{2N}} \leq N e^{-\frac{\sigma \abs{\xi}^\alpha}{4}},
\end{gathered}
\end{equation}
when $N\geq 2$. Combining (\ref{eq: approximation heorem, low frequency IV}), (\ref{eq: approximation heorem, low frequency V}) and (\ref{eq: approximation heorem, low frequency VI}) we see that
\begin{equation}\label{eq: estimation of I_1}
\begin{gathered}
I_1 \leq \frac{\omega_{\eta}(\beta_N) + 2\sigma \beta_N^\alpha \pa{1+\beta^2 \tan^2\pa{\frac{\pi\alpha}{2}}}}{2\pi}\int_{\abs{\xi} < \beta_N N^{\frac{1}{\alpha}}}\frac{\abs{\xi}^\alpha}{N}\cdot Ne^{-\frac{\sigma \abs{\xi}^\alpha}{4}}d\xi\\
\leq C\pa{\omega_{\eta}(\beta_N) + 2\sigma \beta_N^\alpha \pa{1+\beta^2 \tan^2\pa{\frac{\pi\alpha}{2}}}},
\end{gathered}
\end{equation}
where $C=\int_{\mathbb{R}}\abs{\xi}^\alpha e^{-\frac{\sigma \abs{\xi}^\alpha}{4}}d\xi$. Next, we estimate $I_2$.\\
The expression $I_2$ is connected to the high frequency theorem, Theorem \ref{thm:high frequency}, and as such we need to check that its conditions are satisfied. From the conditions given in the statement of our theorem, we know that there exists $\lambda>0$ such that $E_\lambda<\infty$, using the notations of Theorem \ref{thm:high frequency}. We only need to show that $H(g)<\infty$. Indeed, since $g\in L^p(\mathbb{R})$ for some $p>1$ we have that
\begin{equation}\nonumber
\int_{\mathbb{R}}g(x)\abs{\log g(x)}dx = -\int_{g<1}g(x)\log g(x) dx + \int_{g\geq 1}g(x)\log g(x)dx.
\end{equation}
We already showed in the proof of Theorem \ref{thm:high frequency} that $-\int_{g<1}g(x)\log g(x) dx<\infty$, and since we can always find $C_p>0$ such that $\log x \leq C_p x^{p-1}$ for $x\geq 1$ we conclude that
\begin{equation}\nonumber
\int_{g\geq 1}g(x)\log g(x)dx \leq C_p \Norm{g}^p_{L^p(\mathbb{R})}<\infty,
\end{equation}
showing that $H(g)<\infty$. Thus, for any $\tau>0$ and for $\beta$ small enough we have that
\begin{equation}\nonumber
|\widehat{g}(\xi)| \leq 1- \beta^{2+\tau} + \phi_{\tau}(\beta),
\end{equation}
with $\frac{\phi_\delta(\tau)}{\beta^{2+\tau}}\underset{\beta\rightarrow 0 }{\longrightarrow}0$. \\
Using the above, we conclude that
\begin{equation}\label{eq: estimation of I_2}
\begin{gathered}
I_2 = \frac{N^{\frac{1}{\alpha}}}{2\pi}\int_{\abs{\xi}>\beta_N}\abs{\widehat{g}(\xi)}^Nd\xi \leq \frac{N^{\frac{1}{\alpha}}}{2\pi}\pa{1- \beta_N^{2+\tau} + \phi_{\tau}(\beta_N)}^{N-q} \Norm{\widehat{g}}^q_{L^q(\mathbb{R})}.
\end{gathered}
\end{equation}
Lastly, we need to estimate $I_3$, which is the simplest of the three integrals. Indeed
\begin{equation}\label{eq: estimation of I_3}
\begin{gathered}
I_3 = \frac{1}{2\pi}\int_{\abs{\xi}>\beta_N N^{\frac{1}{\alpha}}}e^{-\sigma\abs{\xi}^\alpha}d\xi \leq \frac{e^{-\frac{\sigma N \beta_N^\alpha}{2}}}{2\pi}\int_{\abs{\xi}>\beta_N N^{\frac{1}{\alpha}}}e^{-\frac{\sigma\abs{\xi}^\alpha}{2}}d\xi\\
\leq D e^{-\frac{\sigma N \beta_N^\alpha}{2}},
\end{gathered}
\end{equation}
where $D=\frac{1}{2\pi}\int_{\mathbb{R}}e^{-\frac{\sigma\abs{\xi}^\alpha}{2}}d\xi$. Combining (\ref{eq: estimation of I_1}), (\ref{eq: estimation of I_2}) and (\ref{eq: estimation of I_3}) yields the desired result.
\end{proof}
\begin{remark}\label{rem: epsilon N goes to zero explicitly, unlikefixed beta}
It is clear that if $\br{\beta_N}_{N\in\mathbb{N}}$ is chosen such that it goes to zero and
\begin{equation}\nonumber
\beta_N^{2+\tau}N \underset{N\rightarrow\infty}{\longrightarrow}\infty
\end{equation}
then $\epsilon_\tau(N)$, defined in the above theorem, goes to zero as $N$ goes to infinity, and we have an explicit rate to how fast it does it. A different method to undertake here is to pick $\beta_0$ small enough that all the steps of the proof the theorem work, and get that
\begin{equation}\nonumber
\begin{gathered}
\Norm {g_N-\gamma_{\sigma,\alpha,\beta}}_\infty \leq C_{g,\alpha}\Bigg(N^{\frac{1}{\alpha}}(1-\beta_0^{2+\tau}+\phi_\tau(\beta_0))^{N-q} 
+e^{-\frac{\sigma N\beta_0^\alpha}{2}} \\
+ \omega_{\eta}(\beta_0)+2\sigma \beta_0^\alpha \pa{1+\beta^2 \tan^2\pa{\frac{\pi\alpha}{2}}}\Bigg).
\end{gathered}
\end{equation}
Thus
\begin{equation}\nonumber
\limsup_{N\rightarrow\infty}\Norm {g_N-\gamma_{\sigma,\alpha,\beta}}_\infty \leq \lim_{\beta_0 \rightarrow 0}\pa{ \omega_{\eta}(\beta_0)+2\sigma \beta_0^\alpha \pa{1+\beta^2 \tan^2\pa{\frac{\pi\alpha}{2}}}}=0,
\end{equation}
proving the desired convergence, but losing the explicit $N$ dependency!
\end{remark}
An immediate corollary of Theorem \ref{thm: main approximation themroem} is the following:
\begin{theorem}\label{thm: main approximation theorem with conditions}
Let $g$ be the probability density function of a random real variable $X$. Assume that $g\in L^{p^\prime}(\mathbb{R})$ for some $p^\prime>1$ and
\begin{enumerate}[(1)]
\item $\int |x|g(x)dx<\infty$.
\item $\mu_g(x)\underset{x\rightarrow\infty}{\sim} C_S x^{2-\alpha}$ for some $C_S>0$ and $1<\alpha < 2$ where 
\begin{equation}\nonumber
\mu_g(x)=\int_{-x}^x y^2g(y)dy.
\end{equation}
\item \begin{equation}\nonumber
\begin{gathered}
\frac{1-G(x)}{1-G(x)+G(-x)}\underset{x\rightarrow\infty}{\longrightarrow}p \\
\frac{G(-x)}{1-G(x)+G(-x)}\underset{x\rightarrow\infty}{\longrightarrow}q, 
\end{gathered}
\end{equation}
where $G(x)=\int_{-\infty}^x g(y)dy$.
\end{enumerate}
Then, for any positive sequence $\br{\beta_N}_{N\in\mathbb{N}}$ that converges to zero as $N$ goes to infinity and satisfies
\begin{equation}\label{eq: beta tau condition}
\beta_N^{2+\tau}N\underset{N\rightarrow\infty}{\longrightarrow}\infty,
\end{equation}
for some $\tau>0$ and for $N$ large enough, we have that
\begin{equation}\label{eq: main approxiamtion equation for convolution}
\begin{gathered}
\sup_x \abs{g^{\ast N}(x)-\frac{\gamma_{\sigma,\alpha,\beta}\pa{\frac{x-NE}{N^{\frac{1}{\alpha}}}}}{N^{\frac{1}{\alpha}}}} \leq \frac{C_{g,\alpha}}{N^{\frac{1}{\alpha}}}\Bigg(N^{\frac{1}{\alpha}}(1-\beta^{2+\tau}_N+\phi_\tau(\beta_N))^{N-q^\prime} \\
+e^{-\frac{\sigma N\beta_N^\alpha}{2}} 
+ \omega_{\eta}(\beta_N)+2\sigma \beta_N^\alpha \pa{1+\beta^2 \tan^2\pa{\frac{\pi\alpha}{2}}}\Bigg)=\frac{\epsilon_{\tau}(N)}{N^{\frac{1}{\alpha}}},
\end{gathered}
\end{equation}
where 
\begin{enumerate}[(i)]
\item $\sigma=C_S\frac{\Gamma(3-\alpha)}{\alpha(\alpha-1)}\cos\pa{\frac{\pi\alpha}{2}}$, $\beta=p-q$.
\item $E=\int_{\mathbb{R}}xg(x)dx$.
\item $C_{g,\alpha}>0$ is a constant depending only on $g$, its moments and $\alpha$.
\item  $q^\prime$ can be chosen to be the H\"older conjugate of $\min(2,p^{\prime})$.
\item  $\phi_{\tau}$ satisfies
\begin{equation}\nonumber
\lim_{x\rightarrow 0}\frac{\phi_{\tau}(x)}{|x|^{2+\tau}}=0,
\end{equation}
\item $\eta(\xi)$ is the reminder function of $e^{-i\xi E}\widehat{g}(\xi)$, defined in Definition \ref{def: FDA}, and $\omega_\eta(\beta)=\sup_{|x|\leq \beta}\frac{\abs{\eta(x)}}{\abs{x}^\alpha}$.
\end{enumerate}
Under the condition (\ref{eq: beta tau condition}) and the conclusions $(i)-(vi)$ one finds that
\begin{equation}\nonumber
 \lim_{N\rightarrow\infty}\epsilon_{\tau}(N)=0.
\end{equation}
\end{theorem}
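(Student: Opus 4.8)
### Proof Strategy

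\textbf{Reducing to the zero-mean, normalized case.} The plan is to deduce Theorem \ref{thm: main approximation theorem with conditions} from the already-established Theorem \ref{thm: main approximation themroem} by an affine change of variables. First I would set $\widetilde g(x) = g(x+E)$ with $E = \int x g(x)\,dx$, so that $\widetilde g$ is a probability density with zero mean, still lies in $L^{p'}$, and still has a finite absolute moment (indeed $\int |x|\widetilde g\,dx \le \int|x|g\,dx + |E| < \infty$). The characteristic function of $\widetilde g$ is $\widehat{\widetilde g}(\xi) = e^{-i\xi E}\widehat g(\xi)$, which is exactly the function whose remainder function appears in conclusion (vi). The key point is to verify that $\widetilde g$ satisfies the hypotheses of Theorem \ref{thm: main approximation themroem}, i.e.\ that $\widetilde g$ is in the NDA of some $\widehat\gamma_{\sigma,\alpha,\beta}$: this is precisely where Theorem \ref{thm: NDA conditions from Feller} enters. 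Conditions (1), (2), (3) of the present statement are (after the harmless recentering, using that recentering does not affect tail behaviour or the truncated-variance asymptotics $\mu(x)\sim C_S x^{2-\alpha}$ up to lower-order terms) exactly conditions (i), (ii) and the hypotheses of Feller's theorem, and by the first bullet of Remark \ref{rem: our particular case with respect to feller's thm} the normalizing sequence may be taken to be $a_n = n^{1/\alpha}$ with $b_n = 0$, i.e.\ $\widetilde g$ lies in the \emph{Natural} Domain of Attraction of $\widehat\gamma_{C_S,\alpha,p,q}$. The parameters are then read off as $\sigma = C_S\frac{\Gamma(3-\alpha)}{\alpha(\alpha-1)}\cos(\frac{\pi\alpha}{2})$ and $\beta = p - q$ via Remark \ref{rem: char funtion for alpha stable II}.

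\textbf{Applying the main theorem and translating back.} With $\widetilde g$ verified to satisfy all hypotheses, Theorem \ref{thm: main approximation themroem} applied to $\widetilde g$ (with $q'$ the Hölder conjugate of $\min(2,p')$, and with the same sequence $\beta_N$ and exponent $\tau$) gives
\begin{equation}\nonumber
\Norm{\widetilde g_N - \gamma_{\sigma,\alpha,\beta}}_\infty \le \epsilon_\tau(N),
\end{equation}
where $\widetilde g_N(x) = N^{1/\alpha}\,\widetilde g^{\ast N}(N^{1/\alpha}x)$ and $\epsilon_\tau(N)$ is the right-hand side of (\ref{eq: main approxiamtion equation}). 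Now I unwind the two substitutions. Since $\widetilde g = g(\cdot + E)$, one has $\widetilde g^{\ast N}(y) = g^{\ast N}(y + NE)$ (convolution commutes with translation, and $N$ translations by $E$ compose to a translation by $NE$). Hence
\begin{equation}\nonumber
\widetilde g_N(x) = N^{1/\alpha}\, g^{\ast N}\!\big(N^{1/\alpha}x + NE\big),
\end{equation}
and substituting $x = (y - NE)/N^{1/\alpha}$ turns the sup-norm bound into
\begin{equation}\nonumber
\sup_y \Abs{\,N^{1/\alpha} g^{\ast N}(y) - \gamma_{\sigma,\alpha,\beta}\!\Big(\tfrac{y - NE}{N^{1/\alpha}}\Big)} \le \epsilon_\tau(N),
\end{equation}
which is exactly (\ref{eq: main approxiamtion equation for convolution}) after dividing through by $N^{1/\alpha}$.

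\textbf{The convergence $\epsilon_\tau(N)\to 0$.} Finally I would check term by term that $\epsilon_\tau(N) \to 0$ under the hypothesis (\ref{eq: beta tau condition}), $\beta_N^{2+\tau}N \to \infty$. The first term is $N^{1/\alpha}(1 - \beta_N^{2+\tau} + \phi_\tau(\beta_N))^{N - q'}$: writing $1 - \beta_N^{2+\tau} + \phi_\tau(\beta_N) = 1 - \beta_N^{2+\tau}(1 - \phi_\tau(\beta_N)/\beta_N^{2+\tau})$ and using $\phi_\tau(\beta_N)/\beta_N^{2+\tau} \to 0$, this is bounded by $N^{1/\alpha}\exp(-\tfrac12 \beta_N^{2+\tau}(N - q'))$ for $N$ large, and since $\beta_N^{2+\tau}N \to \infty$ dominates the polynomial factor $N^{1/\alpha}$ (logarithmically: $\tfrac1\alpha \log N - \tfrac12\beta_N^{2+\tau}N \to -\infty$), this term vanishes. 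The term $e^{-\sigma N\beta_N^\alpha/2} \to 0$ because $N\beta_N^\alpha \ge N\beta_N^{2+\tau} \to \infty$ for $\beta_N < 1$ (as $\alpha < 2 < 2+\tau$). The remaining two terms, $\omega_\eta(\beta_N)$ and $2\sigma\beta_N^\alpha(1 + \beta^2\tan^2(\frac{\pi\alpha}{2}))$, both tend to zero since $\beta_N \to 0$: the first because $\omega_\eta(\beta) = \sup_{|x|\le\beta}|\eta(x)|/|x|^\alpha \to 0$ as $\beta\to 0$ by the definition of the FDA (Definition \ref{def: FDA}), the second trivially. I expect the only genuinely delicate point is the bookkeeping in the first step — confirming that recentering by $E$ preserves the precise asymptotic $\mu_{\widetilde g}(x) \sim C_S x^{2-\alpha}$ with the \emph{same} constant $C_S$ (it does, since the recentering perturbs the truncated second moment only by an $O(x)$ term, negligible against $x^{2-\alpha}$ since $2 - \alpha > 1$... wait, $2-\alpha < 1$, so one must instead argue via the tail of $g$ directly: the tails of $g$ and $\widetilde g$ are asymptotically equal, and $\mu(x) \sim x^{2-\alpha}L(x)$ is governed by the tails), and that the ratios in condition (3) are likewise unaffected by the shift. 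The rest is routine.
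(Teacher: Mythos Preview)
Your approach is essentially identical to the paper's: recenter $g$ by its mean, verify the NDA hypotheses for the recentered density via Feller's theorem (Theorem \ref{thm: NDA conditions from Feller}) and Remark \ref{rem: our particular case with respect to feller's thm}, apply Theorem \ref{thm: main approximation themroem}, and undo the translation using $\widetilde g^{\ast N}(y)=g^{\ast N}(y+NE)$.

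The one place where your argument wobbles and the paper's is cleaner is the verification that $\mu_{\widetilde g}(x)\sim C_S x^{2-\alpha}$. You first guess an $O(x)$ perturbation, then retreat to a vague tail argument. The paper simply expands
\[
\mu_{\widetilde g}(x)=\int_{-x+E}^{x+E}\!z^2 g(z)\,dz-2E\!\int_{-x+E}^{x+E}\!z g(z)\,dz+E^2\!\int_{-x+E}^{x+E}\!g(z)\,dz,
\]
observes that the first term lies between $\mu_g(x-|E|)$ and $\mu_g(x+|E|)$ and hence is $\sim C_S x^{2-\alpha}$, while the last two terms are $O(1)$ (they converge) and thus negligible since $x^{2-\alpha}\to\infty$. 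Your explicit term-by-term check that $\epsilon_\tau(N)\to 0$ is a useful addition that the paper relegates to Remark \ref{rem: epsilon N goes to zero explicitly, unlikefixed beta}.
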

\begin{proof}
We start by defining $g_0(x)=g(x+E)$. Clearly $g_0\in L^{p\prime}(\mathbb{R})$ and $\int_{\mathbb{R}}|x|g_0(x)dx < \infty$. If we will be able to show that $g_0$ is in the NDA of $\widehat{\gamma}_{\sigma,\alpha,\beta}$, then, using Theorem \ref{thm: main approximation themroem}, we can conclude that
\begin{equation}\nonumber
\begin{gathered}
\sup_x \abs{g^{\ast N}\pa{N^{\frac{1}{\alpha}}x+NE} - \frac{\gamma_{\sigma,\alpha,\beta}(x)}{N^{\frac{1}{\alpha}}}} \leq \frac{\epsilon_\tau(N)}{N^{\frac{1}{\alpha}}},
\end{gathered}
\end{equation}
as $g_{0}^{\ast N}(x)=g^{\ast N}(x+NE)$, and the desired result follows.\\
We only have to prove that $g_0$ is in the appropriate NDA. To do that we will use Theorem \ref{thm: NDA conditions from Feller}. From its definition we know that $g_0$ has zero mean. Clearly
\begin{equation}\nonumber
\begin{gathered}
\frac{1-G_0(x)}{1-G_0(x)+G_0(-x)}\underset{x\rightarrow\infty}{\longrightarrow}p \\
\frac{G_0(-x)}{1-G_0(x)+G_0(-x)}\underset{x\rightarrow\infty}{\longrightarrow}q, 
\end{gathered}
\end{equation}
with $G_0(x)=\int_{-\infty}^x g_0(y)dy$, as $G_0(x)=G(x+E)$. \\
Next, we see that
\begin{equation}\nonumber
\mu_{g_0}(x)=\int_{-x}^x y^2g_0(y)dy = \int_{-x+E}^{x+E}y^2 g(y)dy - 2E\int_{-x+E}^{x+E}yg(y)dy + E^2\int_{-x+E}^{x+E}g(y)dy.
\end{equation}
The first term is bounded between $\mu_g(x-E)$ and $\mu_g(x+E)$ and as such behaves like $C_S x^{2-\alpha}$ as $x$ goes to infinity. The rest of the terms have a limit as $x$ goes to infinity, implying that
\begin{equation}\nonumber
\mu_{g_0}(x)\sim C_S x^{2-\alpha}.
\end{equation}
All the conditions of Theorem \ref{thm: NDA conditions from Feller} are satisfied (see Remark \ref{rem: our particular case with respect to feller's thm}), with $\sigma$ and $\beta$ given by $(i)$, and the proof is complete.
\end{proof}
Now that we have an appropriate local central limit theorem, we are ready to go to the next section where we will show that families of the type
\begin{equation}\nonumber
F_N=\frac{f^{\otimes N}}{\mathcal{Z}_N\pa{f,\sqrt{N}}}
\end{equation}
are chaotic and entropically chaotic, for a large class of functions $f$ with moments of order $2\alpha$, $1<\alpha<2$.\\
Before we do that we'd like to mention that with additional conditions on $g$, the estimation on $\epsilon_\tau$, defined in Theorem \ref{thm: main approximation theorem with conditions}, can become more explicit. This will be done via an explicit estimation for $\omega_\eta(\xi)$. Such estimation can be found in \cite{GJT}, yet the additional conditions are very restrictive and we weren't able to find many functions that will satisfy all of them with our simpler conditions. As it is still of interest we will provide some information on the matter in the Appendix.

\section{Chaoticity and Entropic Chaoticity for Families with Unbounded Fourth Moment.}\label{sec: chaoticity and entropic chaoticity}
The study of the chaoticity and entropic chaoticity of distribution function, $\br{F_N}_{N\in\mathbb{N}}$, on Kac's sphere that have the special form given in (\ref{eq: usual suspect}) is intimately connected to the asymptotic behaviour of the normalisation function $\mathcal{Z}_N\pa{f,r}$ at all $r$, and not only its value at $r=\sqrt{N}$. Formula (\ref{eq: interpretation of the normalisation function}) for the normalisation function, presented in Section \ref{sec: preliminaries}, and the local central limit theorem we just proved provide us with the necessary tools to find the desired behaviour. 
\begin{theorem}\label{thm: asyptotic behaviour for h conv N and normalisation function}
Let $f$ be the probability density function of a random real variable $V$ such that $f\in L^p(\mathbb{R})$ for some $p>1$. Let
\begin{equation}\nonumber
\nu_{f}(x)=\int_{-\sqrt{x}}^{\sqrt{x}}y^4 f(y)dy,
\end{equation}
and assume that
\begin{equation}\nonumber
\int_{\mathbb{R}}x^2 f(x)dx=E<\infty.
\end{equation}
and $\nu_{f}(x)\underset{x\rightarrow\infty}{\sim}C_S x^{2-\alpha}$ for some $C_S>0$ and $1<\alpha<2$. Then
\begin{equation}\label{eq: asymptote for h conv N}
\sup_x \abs{h^{\ast N}(x)-\frac{\gamma_{\sigma,\alpha,1}\pa{\frac{x-NE}{N^{\frac{1}{\alpha}}}}}{N^{\frac{1}{\alpha}}}}\leq \frac{\epsilon(N)}{N^{\frac{1}{\alpha}}},
\end{equation}
where $\lim_{N\rightarrow\infty}\epsilon(N)=0$, $\sigma=C_S \frac{\Gamma (3-\alpha)}{\alpha(\alpha-1)}\cos\pa{\frac{\pi\alpha}{2}}$ and $h$ is the probability density function of the random variable $V^2$. Moreover, $\epsilon(N)$ can be bound by $\epsilon_{\tau}(N)$, given in Theorem \ref{thm: main approximation theorem with conditions}, with $\eta$ the reminder function of $\widehat{h}$. \\
In addition,
\begin{equation}\label{eq: asymptote for normalisation function}
\mathcal{Z}_N\pa{f,\sqrt{r}}=\frac{2}{\abs{\mathbb{S}^{N-1}}r^{\frac{N-2}{2}}}\frac{1}{N^{\frac{1}{\alpha}}}\pa{\gamma_{\sigma,\alpha,1}\pa{\frac{r-NE}{N^{\frac{1}{\alpha}}}}+\lambda_N(r)},
\end{equation}
where $\sup_u \abs{\lambda_N(u)}\underset{N\rightarrow\infty}{\longrightarrow}0$.
\end{theorem}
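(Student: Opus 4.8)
The plan is to deduce both displayed estimates from the convolution local limit theorem, Theorem \ref{thm: main approximation theorem with conditions}, applied to $g=h$, where $h$ is the probability density of $V^2$. Recall from Remark \ref{rem: properties of h} that $h(u)=\frac{f(\sqrt u)+f(-\sqrt u)}{2\sqrt u}$ for $u>0$ and $h(u)=0$ for $u\le 0$, and that $f\in L^p(\mathbb{R})$ with $p>1$ forces $h\in L^{p'}(\mathbb{R})$ for some $p'>1$. Once (\ref{eq: asymptote for h conv N}) is established, the normalisation estimate (\ref{eq: asymptote for normalisation function}) follows by a one-line substitution into the probabilistic formula of Lemma \ref{lem: interpretation of normalisation function}.

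First I would check that $h$ meets the hypotheses of Theorem \ref{thm: main approximation theorem with conditions}. Integrability is the line above. For the first moment, the change of variables $u=v^2$ gives $\int_0^\infty u\,h(u)\,du=\int_{\mathbb{R}}v^2 f(v)\,dv=E<\infty$, so hypothesis $(1)$ holds and $E$ is exactly the shift appearing in the statement. For the truncated second moment, the same substitution $u=y^2$ yields
\begin{equation}\nonumber
\mu_h(x)=\int_{-x}^{x}u^2 h(u)\,du=\int_0^x u^2\,\frac{f(\sqrt u)+f(-\sqrt u)}{2\sqrt u}\,du=\int_{-\sqrt x}^{\sqrt x}y^4 f(y)\,dy=\nu_f(x),
\end{equation}
so $\mu_h(x)\underset{x\to\infty}{\sim}C_S x^{2-\alpha}$, which is hypothesis $(2)$ with slowly varying part constant. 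Finally, $h$ is supported on $[0,\infty)$, so its distribution function has vanishing left tail and hypothesis $(3)$ holds with $p=1$, $q=0$ (the second bullet of Remark \ref{rem: our particular case with respect to feller's thm}). Hence $h$ lies in the NDA of $\widehat{\gamma}_{\sigma,\alpha,1}$ with $\sigma=C_S\frac{\Gamma(3-\alpha)}{\alpha(\alpha-1)}\cos\pa{\frac{\pi\alpha}{2}}$ and $\beta=p-q=1$, and Theorem \ref{thm: main approximation theorem with conditions}, applied with any sequence $\pa{\beta_N}_{N\in\mathbb{N}}$ decreasing to $0$ subject to $\beta_N^{2+\tau}N\to\infty$ (for instance $\beta_N=N^{-\frac{1}{3+\tau}}$), gives precisely (\ref{eq: asymptote for h conv N}), with $\epsilon(N)\le\epsilon_\tau(N)\underset{N\to\infty}{\longrightarrow}0$ and the reminder function as described there.

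It then remains to transfer this to $\mathcal{Z}_N$. By Lemma \ref{lem: interpretation of normalisation function} one has $\mathcal{Z}_N\pa{f,\sqrt r}=\frac{2h^{\ast N}(r)}{\abs{\mathbb{S}^{N-1}}r^{\frac{N-2}{2}}}$. Setting
\begin{equation}\nonumber
\lambda_N(r):=N^{\frac{1}{\alpha}}h^{\ast N}(r)-\gamma_{\sigma,\alpha,1}\pa{\frac{r-NE}{N^{\frac{1}{\alpha}}}},
\end{equation}
the bound (\ref{eq: asymptote for h conv N}) says exactly $\sup_r\abs{\lambda_N(r)}\le\epsilon(N)\to 0$, and inserting $h^{\ast N}(r)=N^{-\frac{1}{\alpha}}\pa{\gamma_{\sigma,\alpha,1}\pa{\frac{r-NE}{N^{\frac{1}{\alpha}}}}+\lambda_N(r)}$ into the formula for $\mathcal{Z}_N$ produces (\ref{eq: asymptote for normalisation function}).

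The only genuine computation is the identity $\mu_h=\nu_f$ together with the integrability transfer $f\in L^p\Rightarrow h\in L^{p'}$, both already essentially contained in Section \ref{sec: preliminaries}; the rest is bookkeeping of constants ($\beta=1$, $E=\int v^2 f(v)\,dv$, the reminder function being that of $e^{-i\xi E}\widehat h$ rather than $\widehat h$, and the existence of an admissible sequence $\beta_N$). Consequently I do not anticipate a substantive obstacle: the heavy analysis was already carried out in Theorems \ref{thm: FDA and NDA}, \ref{thm:high frequency} and \ref{thm: main approximation themroem}, and this statement is the clean specialisation to $h$.
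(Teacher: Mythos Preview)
Your proposal is correct and follows essentially the same route as the paper: verify that $h$ satisfies the hypotheses of Theorem~\ref{thm: main approximation theorem with conditions} (integrability via Remark~\ref{rem: properties of h}, first moment equal to $E$, $\mu_h=\nu_f$, and $p=1$, $q=0$ from the one-sided support), then read off (\ref{eq: asymptote for h conv N}), and finally plug into Lemma~\ref{lem: interpretation of normalisation function} to obtain (\ref{eq: asymptote for normalisation function}). Your added details---the explicit definition of $\lambda_N(r)$, a concrete choice of $\beta_N$, and the remark that the reminder function is strictly that of $e^{-i\xi E}\widehat{h}$---are all correct refinements of what the paper leaves implicit.
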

\begin{proof}
We start by noticing that (\ref{eq: asymptote for normalisation function}) follows immediately from (\ref{eq: interpretation of the normalisation function}) and (\ref{eq: asymptote for h conv N}). Next, we will show that the conditions of Theorem \ref{thm: main approximation theorem with conditions} are satisfied by $h$, concluding inequality (\ref{eq: asymptote for h conv N}), and the estimation for $\epsilon(N)$.\\ 
As was mentioned before, the function $h$ is given by
\begin{equation}\nonumber
h(x)=\begin{cases}
\frac{f\pa{\sqrt{x}}+f\pa{-\sqrt{x}}}{2\sqrt{x}} & x>0 \\
0 & x\leq 0
\end{cases}
\end{equation}
and $h\in L^{p^\prime}(\mathbb{R})$ for some $p^\prime >1$ when $f\in L^p(\mathbb{R})$ with $p>1$ (see Remark \ref{rem: properties of h}). Moreover, for any $\kappa>0$
\begin{equation}\nonumber
\int_{\mathbb{R}}|x|^\kappa h(x)dx=\int_{\mathbb{R}}|x|^{2\kappa}f(x)dx,
\end{equation}
from which we conclude that 
\begin{equation}\nonumber
\int_{\mathbb{R}}|x|h(x)dx = \int_{\mathbb{R}}xh(x)=E<\infty.
\end{equation}
By its definition
\begin{equation}\nonumber
\begin{gathered}
\mu_h(x) = \int_{-x}^{x}y^2 h(y)dy 
=\nu_f(x)\underset{x\rightarrow\infty}{\sim}C_S x^{2-\alpha},
\end{gathered}
\end{equation}
and recalling Remark \ref{rem: our particular case with respect to feller's thm}, we conclude that if $H$ is the probability distribution function of $V^2$ then for any $x>0$
\begin{equation}\nonumber
\begin{gathered}
\frac{1-H(x)}{1-H(x)+H(-x)}=1 \\
\frac{H(-x)}{1-H(x)+H(-x)}=0.
\end{gathered}
\end{equation}
Thus, all the condition of Theorem \ref{thm: main approximation theorem with conditions} are satisfied by $h$ with the appropriate $\sigma,\alpha$ and $\beta=1$, and the proof is complete.
\end{proof}
\begin{remark}\label{rem: normalisation function approximation theorem}
A couple of remarks:
\begin{itemize}
\item The formula for the normalisation function, $\mathcal{Z}_N$, depends heavily on $h^{\ast N}$, where $h$ is the distribution function of the random variable $V^2$. Any hope for a normal central limit theorem, let alone a local one, relies heavily on the finiteness of the variance of $h$, i.e. the fourth moment of $f$. This is exactly the reason why the fourth moment of $f$ plays such an important role in the theory. When $f$ lacks that condition, a thing that manifests itself via the function $\nu_f(x)$ in the above theorem, there is still something that can be said and our local central limit theorem comes into play by replacing the Gaussian with the stable laws.  
\item The parameter $\beta$ represents the skewness of the stable distribution. In general $\beta\in[-1,1]$ and the closer it is to $1$, the more right skewed the distribution is. The closer it gets to $-1$, the more left skewed the distribution is. Since our probability density function $h$ is supported on the positive real line, it is not surprising that we got that $\beta$ must be $1$! 
\end{itemize}
\end{remark}
We are now ready to prove Theorems \ref{thm: chaoticity and entropic chaoticity general} and \ref{thm: chaoticity and entropic chaoticity with growth conditions on f}.
\begin{proof}[Proof of Theorem \ref{thm: chaoticity and entropic chaoticity general}]
Due to the given information on $f$, we see that it satisfies all the conditions of Theorem \ref{thm: asyptotic behaviour for h conv N and normalisation function}, and as such for any finite $k\in\mathbb{R}$
\begin{equation}\label{eq: chaoticity and entropic chatoicity proof I}
\begin{gathered}
\abs{\mathbb{S}^{N-k-1}}r^{\frac{N-k-2}{2}}\mathcal{Z}_{N-k}\pa{f,\sqrt{r}} \\
=\frac{2}{\pa{N-k}^{\frac{1}{\alpha}}}\pa{\gamma_{\sigma,\alpha,1}\pa{\frac{r-\pa{N-k}}{\pa{N-k}^{\frac{1}{\alpha}}}}+\lambda_{N-k}(r)},
\end{gathered}
\end{equation}
for some $\sigma=C_S \frac{\Gamma(3-\alpha)}{\alpha(\alpha-1)}\cos\pa{\frac{\pi\alpha}{2}}$ and $\lambda_{N-k}$ such that
\begin{equation}\nonumber
\epsilon_{N-k}=\sup_r \abs{\lambda_{N-k}(r)}\underset{N\rightarrow\infty}{\longrightarrow}0.
\end{equation}
Using Lemma \ref{lem: k-th marginal} with $F_N=\frac{f^{\otimes N}}{\mathcal{Z}_N\pa{f,\sqrt{N}}}$ we find that
\begin{equation}\nonumber
\begin{gathered}
\Pi_k(F_N)\pa{v_1,\dots,v_k}=\frac{\abs{\mathbb{S}^{N-k-1}}\pa{N-\sum_{i=1}^k v_i^2}_{+}^{\frac{N-k-2}{2}}\mathcal{Z}_{N-k}\pa{f,\sqrt{N-\sum_{i=1}^k v_i^2}}}{\abs{\mathbb{S}^{N-1}}N^{\frac{N-2}{2}}\mathcal{Z}_N\pa{f,\sqrt{N}}}\\
\cdot f^{\otimes k}\pa{v_1,\dots,v_k}.
\end{gathered}
\end{equation}
Combining this with (\ref{eq: chaoticity and entropic chatoicity proof I}) yields
\begin{equation}\label{eq: chaoticity and entropic chatoicity proof II}
\begin{gathered}
\Pi_k(F_N)\pa{v_1,\dots,v_k}=\pa{\frac{N}{N-k}}^{\frac{1}{\alpha}}\frac{\gamma_{\sigma,\alpha,1}\pa{\frac{k-\sum_{i=1}^k v_i^2}{\pa{N-k}^{\frac{1}{\alpha}}}}+\lambda_{N-k}\pa{N-\sum_{i=1}^k v_i^2}}{\gamma_{\sigma,\alpha,1}(0)+\lambda_N(N)}\\
\cdot f^{\otimes k}\pa{v_1,\dots,v_k}\chi_{\sum_{i=1}^k v_i^2 \leq N}\pa{v_1,\dots,v_k},
\end{gathered}
\end{equation}
where $\chi_A$ is the characteristic function of the set $A$. By its definition, given in (\ref{eq: def of gamma_sigma}), and the properties of $\widehat{\gamma}_{\sigma,\alpha,\beta}$, we know that $\gamma_{\sigma,\alpha,1}$ is bounded and continuous on $\mathbb{R}$. As such, along with the conditions on $\lambda_{N-k}$ and $\lambda_N$, we conclude that
\begin{equation}\nonumber
\Pi_k(F_N)\pa{v_1,\dots,v_k} \underset{N\rightarrow\infty}{\longrightarrow}f^{\otimes k}\pa{v_1,\dots,v_k},
\end{equation}
pointwise. Using Lemma \ref{lem: simple chaoticity condition} we obtain that $\br{F_N}_{N\in\mathbb{N}}$ is $f-$chaotic.\\
Next we turn our attention to the entropic chaos. Using symmetry, (\ref{eq: chaoticity and entropic chatoicity proof I}) and (\ref{eq: chaoticity and entropic chatoicity proof II}) we find that
\begin{equation}\nonumber
\begin{gathered}
H_N(F_N)=\frac{1}{\mathcal{Z}_N\pa{f,\sqrt{N}}}\int_{\mathbb{S}^{N-1}\pa{\sqrt{N}}}f^{\otimes N}\log\pa{f^{\otimes N}}d\sigma^N - \log\pa{\mathcal{Z}_{N}\pa{f,\sqrt{N}}} \\
=N\int_{\mathbb{R}}\Pi_1(F_N)(v_1)\log \pa{f(v_1)}dv_1 - \log \pa{\frac{2\pa{\gamma_{\sigma,\alpha,1}(0)+\lambda_N(N)}}{\abs{\mathbb{S}}^{N-1}N^{\frac{N-2}{2}+\frac{1}{\alpha}}}} \\
=N\pa{\frac{N}{N-1}}^{\frac{1}{\alpha}} \int_{-\sqrt{N}}^{\sqrt{N}}\frac{\gamma_{\sigma,\alpha,1}\pa{\frac{1-v_1^2}{\pa{N-1}^{\frac{1}{\alpha}}}}+\lambda_{N-1}\pa{N-v_1^2}}{\gamma_{\sigma,\alpha,1}(0)+\lambda_N(N)}
f(v_1)\log f(v_1)dv_1 \\
- \log \pa{2\sqrt{\pi}\pa{\gamma_{\sigma,\alpha,1}(0)+\lambda_N(N)}\pa{1+O\pa{\frac{1}{N}}}}+\pa{\frac{1}{\alpha}-\frac{1}{2}}\log N+\frac{N}{2}\log\pa{2\pi e}.
\end{gathered}
\end{equation}
where we have used the fact that $\abs{\mathbb{S}^{N-1}}=\frac{2\pi^{\frac{N}{2}}}{\Gamma\pa{\frac{N}{2}}}$, and an asymptotic approximation for the Gamma function.\\ 
Since
\begin{equation}\nonumber
\begin{gathered}
\abs{\frac{\gamma_{\sigma,\alpha,1}\pa{\frac{1-v_1^2}{\pa{N-1}^{\frac{1}{\alpha}}}}+\lambda_{N-1}\pa{N-v_1^2}}{\gamma_{\sigma,\alpha,1}(0)+\lambda_N(N)}
f(v_1)\log f(v_1) } \\
\leq \frac{\Norm{\gamma_{\sigma,\alpha,1}}_\infty + \epsilon_{N-1}}{\gamma_{\sigma,\alpha,1}(0)-\epsilon_N}f(v_1)\abs{\log f(v_1)}  \\
\leq  \frac{2\pa{\Norm{\gamma_{\sigma,\alpha,1}}_\infty + 1}}{\gamma_{\sigma,\alpha,1}(0)}f(v_1)\abs{\log f(v_1)}\in L^1 (\mathbb{R}), 
\end{gathered}
\end{equation}
for $N$ large enough. Combining this with the fact that $\br{\Pi_1(F_N)}_{N\in\mathbb{N}}$ converges to $f$ pointwise, we can use the dominated convergence theorem to conclude that
\begin{equation}\label{eq: chaoticity and entropic chatoicity proof III}
\lim_{N\rightarrow\infty}\frac{H_N(F_N)}{N}= \int_\mathbb{R}f(v_1)\log f(v_1)dv_1 + \frac{\log 2\pi +1}{2}=H(f|\gamma),
\end{equation}
and the proof is complete.
\end{proof} 
\begin{proof}[Proof of Theorem \ref{thm: chaoticity and entropic chaoticity with growth conditions on f}]
It is easy to see that the condition  $f(x)\underset{x\rightarrow\infty}{\sim}\frac{D}{|x|^{1+2\alpha}}$ for some $1<\alpha<2$ and $D>0$ implies that 
\begin{equation}\nonumber
\nu_f(x)\underset{x\rightarrow\infty}{\sim}\frac{D}{2-\alpha} x^{2-\alpha}.
\end{equation}

Thus, with the added information given in the theorem we know that $f$ satisfies the conditions of Theorem \ref{thm: chaoticity and entropic chaoticity general}, and we conclude the desired result.
\end{proof}
\begin{remark}\label{rem: explict exmples of functions}
Theorem \ref{thm: chaoticity and entropic chaoticity with growth conditions on f} gives rise to many, previously unknown, entropically chaotic families, determined mainly by a simple growth condition. An explicit example to such family is the one generated by the function
\begin{equation}\nonumber
f(x)=\frac{\sqrt{2}}{\pi\pa{1+x^4}}.
\end{equation}
\end{remark}

\section{Lower Semi Continuity and Stability Property.}\label{sec: lsc and stability}
As discussed in Section \ref{sec: introduction}, the concept of entropic chaoticity is much stronger than that of normal chaoticity. This is due to the inclusion of all correlation information and an appropriate rescaling of the relative entropy. In this section we will show that the rescaled entropy is a good form of distance, one that is stable under certain conditions.\\
The first step we must make, inspired by \cite{CCRLV}, is a form of lower semi continuity property for the relative entropy on Kac's sphere, expressed in Theorem \ref{thm: lower semi continuity}. To begin with, we mention that in \cite{CCRLV}, the authors have proved the following:
\begin{theorem}\label{thm: CCRLV theorem}
Let $g$ be a probability density function on $\mathbb{R}$ such that $g\in L^p(\mathbb{R})$ for some $p>1$. Assume in addition that
\begin{equation}\nonumber
\int_{\mathbb{R}}x^2 g(x)dx=1, \quad \int_{\mathbb{R}}x^4 g(x)dx<\infty,
\end{equation}
and denote $d\nu_N=G_N d\sigma^N$, where $G_N=\frac{g^{\otimes N}}{\mathcal{Z}_N \pa{g,\sqrt{N}}}$, restricted to Kac's sphere. Let $\br{\mu_N}_{N\in\mathbb{N}}$ be a family of symmetric probability measures on Kac's sphere such that for some $k\in\mathbb{N}$ we have that
\begin{equation}\nonumber
\Pi_k(\mu_N) \underset{N\rightarrow\infty}{\rightharpoonup}\mu_k.
\end{equation}
Then
\begin{equation}\label{eq: l.s.c in CCRLV}
\frac{H\pa{\mu_k | g^{\otimes k}}}{k} \leq \liminf_{N\rightarrow\infty}\frac{H_N(\mu_N | \nu_N)}{N}.
\end{equation}
\end{theorem}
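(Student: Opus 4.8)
The plan is to strip the reference measure apart into its uniform factor $\sigma^N$ and its product factor, reduce the uniform factor to an honest product via a radial Gaussian extension, and then reassemble the constants using the sharp normalisation-function asymptotics. We may assume $\liminf_N H_N(\mu_N|\nu_N)/N<\infty$, as otherwise there is nothing to prove; passing to a subsequence we fix $C>0$ with $H_N(\mu_N|\nu_N)\le CN$, and (the marginals being consistent, and the entropy bound giving tightness of each $\Pi_j(\mu_N)$) we may also assume $\Pi_j(\mu_N)\rightharpoonup\mu_j:=\Pi_j(\mu_k)$ for all $1\le j\le k$. Since $d\nu_N/d\sigma^N=g^{\otimes N}/\mathcal{Z}_N(g,\sqrt N)$ and $\mu_N$ is symmetric, one has the exact splitting
\[
\frac{H_N(\mu_N|\nu_N)}{N}=\frac{H_N(\mu_N|\sigma^N)}{N}-\int_{\mathbb{R}}\log g(v)\,d\Pi_1(\mu_N)(v)+\frac1N\log\mathcal{Z}_N(g,\sqrt N),
\]
so the problem is reduced to a lower bound for the $\sigma^N$-relative entropy plus a convergence statement for the two scalar terms.

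For the Gaussian core one uses that $\sigma^N$ is exactly $\gamma^{\otimes N}$ conditioned onto Kac's sphere. Extend $\mu_N$ to a symmetric probability measure $\widetilde\mu_N$ on $\mathbb{R}^N$ by keeping the angular law of $\mu_N$ and imposing the $\gamma^{\otimes N}$ radial law, the two being independent; then $d\widetilde\mu_N/d\gamma^{\otimes N}$ depends only on the angular variable, whence $H(\widetilde\mu_N|\gamma^{\otimes N})=H_N(\mu_N|\sigma^N)$, while $\gamma^{\otimes N}$ is a genuine product. Applying the superadditivity of relative entropy with respect to a product reference over a partition of $\{1,\dots,N\}$ into $\lfloor N/k\rfloor$ blocks of size $k$, and using the symmetry of $\widetilde\mu_N$, gives $H_N(\mu_N|\sigma^N)\ge\lfloor N/k\rfloor\,H\big(\Pi_k(\widetilde\mu_N)\,\big|\,\gamma^{\otimes k}\big)$. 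Since the radial factor concentrates at $1$, one checks $\Pi_k(\widetilde\mu_N)\rightharpoonup\mu_k$, and the weak lower semicontinuity of $\rho\mapsto H(\rho\,|\,\gamma^{\otimes k})$ yields
\[
\liminf_{N\to\infty}\frac{H_N(\mu_N|\sigma^N)}{N}\ \ge\ \frac1k\,H\big(\mu_k\,\big|\,\gamma^{\otimes k}\big).
\]

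It remains to recombine. From Theorem~\ref{thm: normalisation function fixed f approximation} and Stirling's formula one gets $\frac1N\log\mathcal{Z}_N(g,\sqrt N)\to-\tfrac12\log(2\pi e)$; writing $H(\mu_k|\gamma^{\otimes k})$ and $H(\mu_k|g^{\otimes k})$ in terms of the Lebesgue density of $\mu_k$ and its first two moments, one checks that the three limits assemble into $\frac1k H(\mu_k|g^{\otimes k})$ \emph{provided} $\int_{\mathbb{R}}|v|^2\,d\mu_1=1$ and $\int_{\mathbb{R}}\log g\,d\Pi_1(\mu_N)\to\int_{\mathbb{R}}\log g\,d\mu_1$. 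These two points are the crux, and the place where the fourth-moment hypothesis $\int_{\mathbb{R}}x^4 g(x)\,dx<\infty$ is actually spent: it makes the $V_1^2$ uniformly integrable under $\Pi_1(\nu_N)$, so that no second-moment defect survives in the limit $\mu_1$, and together with $g\in L^p$ it tames the unbounded parts of $\log g$ (the upper part via $\log^+ g\lesssim g^{p-1}$, the lower part via a $\log(g+\delta)$ truncation followed by $\delta\downarrow0$, exactly as in part~(ii) of Theorem~\ref{thm: lower semi continuity}). Closing these two estimates — rather than the soft functional-analytic steps above — is the main obstacle: under the weaker hypotheses of Theorem~\ref{thm: chaoticity and entropic chaoticity general} they cannot be fully closed, which is precisely why the analogous Theorem~\ref{thm: lower semi continuity} must retain the correction terms $\limsup_N\int\log(f+\delta)\,d\Pi_1(\mu_N)$ and $-\tfrac{1-\int|v|^2\,d\mu}{2}$.
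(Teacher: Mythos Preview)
The paper does not prove this theorem; it quotes it from \cite{CCRLV}. However, the CCRLV argument is visible in the paper's proof of the analogous Theorem~\ref{thm: lower semi continuity}(i): one uses the duality formula $H(\mu|\nu)=\sup_{\varphi\in C_b}\{\int\varphi\,d\mu-\log\int e^\varphi\,d\nu\}$, picks a near-optimal bounded $\varphi$ on $\mathbb{R}^k$, inserts the symmetrised test function $\phi_N=\sum_{\text{blocks}}\varphi(v_{i_1},\dots,v_{i_k})$ into the duality on the sphere, and controls the resulting log-partition ratio $\mathcal{Z}_N(e^\varphi g^{\otimes k}/a,\sqrt N)/\mathcal{Z}_N(g,\sqrt N)$ via the normalisation-function asymptotics (Theorem~\ref{thm: normalisation function fixed f approximation}). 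Because $\varphi$ is bounded and continuous, the weak convergence $\Pi_k(\mu_N)\rightharpoonup\mu_k$ suffices to pass to the limit, and no moment or $\log g$ integrability on the side of $\mu_N$ is ever needed.

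Your route, by contrast, splits through $\sigma^N$ and then tries to recombine. The Gaussian-extension step and the superadditivity bound are fine, but the recombination step has a genuine gap that you yourself flag and do not close. You need $\int v^2\,d\mu_1=1$ and $\int\log g\,d\Pi_1(\mu_N)\to\int\log g\,d\mu_1$, and you suggest the fourth-moment hypothesis on $g$ supplies these. It does not: that hypothesis controls $g$ and hence $\nu_N$, but the theorem places \emph{no} moment or integrability assumption on $\mu_N$ beyond symmetry and weak convergence of a single marginal. Nothing prevents second-moment mass of $\Pi_1(\mu_N)$ from escaping to infinity (so $\int v^2\,d\mu_1<1$ is perfectly possible), and nothing makes $\log g$ uniformly integrable against $\Pi_1(\mu_N)$. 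Uniform integrability of $V_1^2$ under $\Pi_1(\nu_N)$ is irrelevant to the limit of $\Pi_1(\mu_N)$. The duality approach sidesteps this completely by never leaving the world of bounded test functions; your decomposition forces you to confront unbounded integrands against measures you have no control over, and that is why the argument cannot be finished along the lines you sketch.
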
 
Note that due to an inequality, the so-called Csiszar-Kullback-Leibler-Pinsker inequality (\cite{CKLP}) ,one has that
\begin{equation}\label{eq: kullbak pinsher}
\Norm{\mu-\nu}_{TV} \leq \sqrt{2H(\mu|\nu)}, 
\end{equation}
showing that (\ref{eq: l.s.c in CCRLV}) gives a stronger result than an $L^1$ convergence. We will use this theorem as a motivation for our lower semi continuity property, as well as in the particular case of
\begin{equation}\nonumber
g(x)=\gamma(x), \quad d\nu_N=G_Nd\sigma^N=d\sigma^N, 
\end{equation}
where $\gamma(x)$ is the standard Gaussian.\\
Before we begin the proof of Theorem \ref{thm: lower semi continuity} we point out the obvious difference between the $k=1$ and $k>1$ cases. This is due to the fact that the proof relies heavily on our approximation theorem, Theorem \ref{thm: asyptotic behaviour for h conv N and normalisation function}, which is valid \emph{only} in one dimension. The higher dimension case needs to be tackled differently, unlike the proof of Theorem \ref{thm: CCRLV theorem}, where the higher dimension case is proven in a very similar way.\\
The proof of Theorem \ref{thm: lower semi continuity} follows ideas presented in \cite{CCRLV}, with some modification to our current discussion.
\begin{proof}[Proof of Theorem \ref{thm: lower semi continuity}]
We start by noticing that since $C_b\pa{\mathbb{R}^{k_0}}$ can be considered a subspace of $C_b\pa{\mathbb{R}^k}$ whenever $k_0 \leq k$. The weak convergence condition on $\Pi_k(\mu_N)$ implies that
\begin{equation}\nonumber
\Pi_{k_0}(\mu_N)\underset{N\rightarrow\infty}{\rightharpoonup}\mu_{k_0}=\Pi_{k_0}(\mu_k).
\end{equation}
In particular we find that $\Pi_1(\mu_N)$ converges weakly to $\mu=\Pi_1(\mu_k)$.\\
Next, we recall a duality formula for the relative entropy (see \cite{Lott} for instance, for the compact case):
\begin{equation}\label{eq: duality formula for relative entropy}
H(\mu | \nu) = \sup_{\varphi \in C_b} \left\lbrace \int \varphi d\mu - \log\left(\int e^{\varphi}d\nu \right)\right\rbrace.
\end{equation}
Given $\epsilon>0$ we can find $\varphi_{\epsilon}\in C_b(\mathbb{R})$ such that
\begin{equation}\nonumber
\int_{\mathbb{R}}e^{\varphi_{\epsilon}(v)}f(v)dv = 1
\end{equation}
and
\begin{equation}
H(\mu | f) \leq \int_{\mathbb{R}}\varphi_{\epsilon}(v)d\mu(v) +\frac{\epsilon}{2}.
\end{equation}
We can find a compact set $K_{\epsilon}\subset \mathbb{R}$ such that
\begin{equation}\nonumber
\mu\pa{K^c_\epsilon} \leq \frac{\epsilon}{4\Norm{\varphi_\epsilon}_\infty}, \quad \int_{K_\epsilon^c}f(v)dv \leq \frac{\epsilon}{2e^{\Norm{\varphi_\epsilon}_\infty}}.
\end{equation}
Let $\eta_{\epsilon}\in C_c(\mathbb{R})$ be such that
\begin{equation}\nonumber
0\leq \eta_{\epsilon} \leq 1, \quad \eta_{\epsilon}|_{K_{\epsilon}}=1,
\end{equation}
and define $\varphi(v)=\eta_{\epsilon}(v)\varphi_{\epsilon}(v)$. Clearly $\varphi\in C_c(\mathbb{R})$, $\abs{\varphi}\leq \abs{\varphi_\epsilon}$ and
\begin{equation}\label{eq: lsc proof I}
H(\mu | f) \leq \int_{\mathbb{R}}\varphi(v)d\mu(v)+ 2\Norm{\varphi_{\epsilon}}_\infty \mu \pa{K^c_\epsilon} +\frac{\epsilon}{2}<\int_{\mathbb{R}}\varphi(v)d\mu(v)+\epsilon.
\end{equation}
Also,
\begin{equation}\label{eq: lsc proof II}
\abs{\int_{\mathbb{R}}e^{\varphi(v)}f(v)dv-\int_{\mathbb{R}}e^{\varphi_\epsilon(v)}f(v)dv} \leq 2e^{\Norm{\varphi_\epsilon}_\infty}\int_{K_\epsilon^c}f(v)dv < \epsilon.
\end{equation}
For any $N\in\mathbb{N}$, define $\phi_{N}\pa{v_1,\dots,v_N}=\sum_{i=1}^N \varphi(v_i)\in C_b\pa{\mathbb{R}^N}$. Plugging $\phi_N$ as a candidate in (\ref{eq: duality formula for relative entropy}), in the setting of Kac's sphere, and using symmetry we find that
\begin{equation}\nonumber
\begin{gathered}
H_N(\mu_N | \nu_N) \geq N\int_{\mathbb{R}}\varphi(v_1)d\Pi_{1}(\mu_N)(v_1) - \log\pa{\frac{1}{\mathcal{Z}_N\pa{f,\sqrt{N}}}\int_{\mathbb{S}^{N-1}\pa{\sqrt{N}}}\Pi_{i=1}^N\pa{e^{\varphi(v_i)}f(v_i)}d\sigma^N}\\
=N\int_{\mathbb{R}}\varphi(v_1)d\Pi_{1}(\mu_N)(v_1) - \log\pa{\frac{\mathcal{Z}_N \pa{\frac{e^{\varphi}f}{a},\sqrt{N}}}{\mathcal{Z}_N\pa{f,\sqrt{N}}}}-N\log a,
\end{gathered}
\end{equation}
where $a=\int_{\mathbb{R}}e^{\varphi(v)}f(v)dv$. Since $f$ satisfies the conditions of Theorem \ref{thm: asyptotic behaviour for h conv N and normalisation function}, so does the probability density function $\frac{e^\varphi}{a}f$. Denoting by $E=\frac{1}{a}\int_{\mathbb{R}}v^2 e^{\varphi(v)}f(v)dv$ we find that
\begin{equation}\label{eq: lsc proof III}
\frac{\mathcal{Z}_N\pa{\frac{e^{\varphi}f}{a},\sqrt{N}}}{\mathcal{Z}_N(f,\sqrt{N})}=\frac{\gamma_{\sigma_1,\alpha,1}\left(\frac{N-NE}{N^{\frac{1}{\alpha}}} \right)+\epsilon_1(N)}{\gamma_{\sigma,\alpha,1}\left( 0 \right)+\epsilon_2(N)},
\end{equation}
for some $\sigma,\sigma_1$, and $\br{\epsilon_i(N)}_{i=1,2}$ that go to zero as $N$ goes to infinity. Since $\gamma_{\sigma_1,\alpha,1}$ is the defined as the inverse Fourier transform of an $L^1$ function we know that
\begin{equation}\nonumber
\lim_{|x|\rightarrow\infty}\gamma_{\sigma_1,\alpha,1}(x)=0.
\end{equation}
Thus,
\begin{equation}\label{eq: lsc proof IV}
\liminf_{N\rightarrow\infty}\pa{-\frac{\log\pa{\gamma_{\sigma_1,\alpha,1}\left(\frac{N-NE}{N^{\frac{1}{\alpha}}} \right)+\epsilon_1(N)}}{N}} \geq 0.
\end{equation}
Together with the fact that
\begin{equation}\nonumber
\lim_{N\rightarrow\infty}\pa{-\frac{\log\pa{\gamma_{\sigma,\alpha,1}(0)+\epsilon_2(N)}}{N}} = 0,
\end{equation}
the weak convergence of $\Pi_1(\mu_N)$ and (\ref{eq: lsc proof I}), we find that
\begin{equation}\label{eq: lsc proof V}
\begin{gathered}
\liminf_{N\rightarrow\infty}\frac{H_N(\mu_N | \nu_N)}{N} \geq \int_{\mathbb{R}}\varphi(v)d\mu(v) -\log (1+\epsilon)\\
 \geq H(\mu | f) -\epsilon-\log(1+\epsilon),
\end{gathered}
\end{equation}
where we have used (\ref{eq: lsc proof II}) to conclude that $\abs{a-1} < \epsilon$. Since $\epsilon$ was arbitrary, $(i)$ is proved.\\
In order to show $(ii)$, we notice that
\begin{equation}\nonumber
\begin{gathered}
H_N(\mu_N | \nu_N)=\int_{\mathbb{S}^{N-1}\pa{\sqrt{N}}}\log\pa{\frac{d\mu_N}{F_N d\sigma^N}}d\mu_N =H_N(\mu_N | \sigma^N) 
- \int_{\mathbb{S}^{N-1}\pa{\sqrt{N}}}\log \pa{F_N}d\mu_N \\= H_N(\mu_N | \sigma^N) - N\int_{\mathbb{R}}\log\pa{f(v_1)}d\Pi_1(\mu_N)+\log\pa{\mathcal{Z}_N\pa{f,\sqrt{N}}}.
\end{gathered}
\end{equation}
Thus, for any $\delta>0$,
\begin{equation}\label{eq: lsc proof VI}
\begin{gathered}
\liminf_{N\rightarrow\infty}\frac{H_N(\mu_N | \nu_N)}{N}+ \limsup_{N\rightarrow\infty}\int_{\mathbb{R}}\log\pa{f(v_1)+\delta}d\Pi_1(\mu_N) \\
\geq \liminf_{N\rightarrow\infty}\frac{H_N(\mu_N | \sigma^N)}{N}-\frac{\log(2\pi)+1}{2},
\end{gathered}
\end{equation}
where we have used the fact that $\lim_{N\rightarrow\infty}\frac{\log\pa{\mathcal{Z}_N\pa{f,\sqrt{N}}}}{N}=-\frac{\log(2\pi)+1}{2}$, shown in the proof of Theorem \ref{thm: chaoticity and entropic chaoticity general}. From Theorem \ref{thm: CCRLV theorem} we know that 
\begin{equation}\nonumber
\liminf_{N\rightarrow\infty}\frac{H_N(\mu_N | \sigma^N)}{N} \geq \frac{H(\mu_k | \gamma^{\otimes k})}{k},
\end{equation}
and since 
\begin{equation}\nonumber
\begin{gathered}
H(\mu_k |f^{\otimes k} )=H(\mu_k | \gamma^{\otimes k})+ \int_{\mathbb{R}^k}\log\pa{\frac{\gamma^{\otimes k}}{f^{\otimes k}}}d\mu_k \\
=H(\mu_k | \gamma^{\otimes k}) -\frac{k\pa{\log(2\pi)+\int_{\mathbb{R}}v^2d\mu(v)}}{2}- k \int_{\mathbb{R}}\log\pa{f(v)}d\mu(v)
\end{gathered}
\end{equation}
we get the desired result from (\ref{eq: lsc proof VI}).
\end{proof}
We will now prove our first stability result, Theorem \ref{thm: stability property}. Again, the ideas presented here are motivated by \cite{CCRLV}.
\begin{proof}[Proof of Theorem \ref{thm: stability property}]
We start with the simple observation that if $\br{\mu_N}_{N\in\mathbb{N}}$ is a family of symmetric probability measures on Kac's sphere then $\br{\Pi_k(\mu_N)}_{N\in\mathbb{N}}$ is a tight family, for any $k\in\mathbb{N}$. Indeed, given $k\in \mathbb{N}$ we can find $m_N,r_N\in\mathbb{N}$ such that
\begin{equation}\nonumber
N=m_Nk+r_N,
\end{equation} 
where $0\leq r_N <k$. We have that
\begin{equation}\nonumber
\begin{gathered}
\Pi_k(\mu_N)\pa{\br{\sqrt{\sum_{i=1}^k v_i^2 }>R}} \leq \frac{1}{R^2}\int_{\sum_{i=1}^k v_i^2 >R^2}\pa{\sum_{i=1}^k v_i^2}d\Pi_k(\mu_N) \\
\leq \frac{1}{m_N R^2}\int_{\mathbb{S}^{N-1}\pa{\sqrt{N}}}\pa{\sum_{i=1}^{m_Nk} v_i^2}d\mu_N \leq \frac{N}{m_N R^2} < \frac{2k}{R^2},
\end{gathered}
\end{equation}
proving the tightness.\\
Since $\br{\Pi_1{\mu_N}}_{N\in\mathbb{N}}$ is tight, we can find a subsequence, $\br{\Pi_1\pa{\mu_{N_{k_j}}}}_{j\in\mathbb{N}}$, to any subsequence $\br{\Pi_1\pa{\mu_{N_k}}}_{k\in\mathbb{N}}$, that converges to a limit. Denote by $\kappa$ the weak limit of such one subsequence. Using (\ref{eq: l.s.c. for k=1}) we conclude that
\begin{equation}\label{eq: stability I}
H(\kappa | f) \leq \liminf_{j\rightarrow\infty}\frac{H_{N_{k_j}}\pa{\mu_{N_{k_j}}| \nu_{N_{k_j}}}}{N_{k_j}} =0,
\end{equation}
due to condition (\ref{eq: entropic closeness}). Thus, $\kappa=f(v)dv$, and since $\kappa$ was an arbitrary weak limit, we conclude that all possible weak limit points must be $f(v)dv$. Since the weak topology on $P(\mathbb{R})$ is metrisable we conclude that 
\begin{equation}\nonumber
\Pi_1(\mu_N)\underset{N\rightarrow\infty}{\rightharpoonup}f(v)dv=\mu.
\end{equation}
We will show that the convergence is actually in $L^1$ with the weak topology. \\
As an intermediate step in the proof of Theorem \ref{thm: lower semi continuity} we have shown that
\begin{equation}\label{eq: stability I.5}
\begin{gathered}
H(\mu_N | \nu_N) = H(\mu_N | \sigma^N) - N \int_{\mathbb{R}}\log\pa{f(v_1)}d\Pi_1(\mu_N)(v_1) \\
+ \log \pa{\mathcal{Z}_N\pa{f,\sqrt{N}}}.
\end{gathered}
\end{equation}
Using condition (\ref{eq: entropic closeness}), the fact that $\lim_{N\rightarrow\infty}\frac{\mathcal{Z}_N\pa{f,\sqrt{N}}}{N}=-\frac{\log(2\pi)+1}{2}$, and the fact that $f\in L^\infty (\mathbb{R})$ we conclude that there exists $C>0$, independent of $N$, such that for any $\delta>0$
\begin{equation}\label{eq: stability II}
\frac{H(\mu_N | \sigma_N)}{N} \leq C + \log \pa{\Norm{f}_\infty + \delta}.
\end{equation}
The inequality
\begin{equation}\nonumber
\frac{H\pa{\Pi_k(\mu_N) | \Pi_k(\sigma^N)}}{k} \leq 2 \frac{H_N(\mu_N | \sigma^N)}{N}
\end{equation}
proven in \cite{BCM} and valid for any $k\geq 1$ and $N\geq k$, implies that
\begin{equation}\label{eq: stability III}
H\pa{\Pi_k(\mu_N) | \Pi_k(\sigma^N)} \leq 2k\pa{C+\log\pa{\Norm{f}_\infty}+\delta},
\end{equation}
for all $k\in \mathbb{N}$, $N\geq k$ and $\delta>0$.\\
Similar to the proof of Theorem \ref{thm: lower semi continuity}, one can easily see that
\begin{equation}\label{eq: stability IV}
H\pa{\Pi_k(\mu_N) | \gamma^{\otimes k}}=H\pa{\Pi_k(\mu_N) | \Pi_k(\sigma^N)} + \int_{\mathbb{R}^k}\log\pa{\frac{\Pi_k(\sigma^N)}{\gamma^{\otimes k}}}d\Pi_k(\mu_N)
\end{equation}
where $\gamma$ is the standard Gaussian. Since $d\sigma^N=\frac{\gamma^{\otimes N}}{\mathcal{Z}_N\pa{\gamma,\sqrt{N}}}d\sigma^N$, and $\gamma$ is a probability density with finite fourth moment, one can employ similar theorems to those presented here and find that
\begin{equation}\nonumber
\frac{\Pi_k(\sigma^N)\pa{v_1,\dots,v_k}}{\gamma^{\otimes k}\pa{v_1,\dots,v_k}}=\sqrt{\frac{N}{N-k}}\cdot\frac{\gamma\pa{\frac{k-\sum_{i=1}^k v_i^2 }{\sqrt{2N}}}+\lambda_{N-k}\pa{N-k-\sum_{i=1}^k v_I^2}}{1+\lambda_N(N)}\chi_{\sum_{i=1}^k v_i^2 \leq N},
\end{equation}
where $\sup_u\abs{\lambda_{N-k}(u)}\underset{N\rightarrow\infty}{\longrightarrow}0$ and $\lambda_N(N)\underset{N\rightarrow\infty}{\longrightarrow}0$ (see \cite{CCRLV} for more details). As such, 
\begin{equation}\nonumber
\int_{\mathbb{R}^k}\log\pa{\frac{\Pi_k(\sigma^N)}{\gamma^{\otimes k}}}d\Pi_k(\mu_N) \leq \log\pa{\max_{N>k}\sqrt{\frac{N}{N-k}}\frac{\Norm{\gamma}_\infty + \sup_N \sup_u \abs{\lambda_{N-k}(u)}}{1+\inf_N \lambda_N(N)}},
\end{equation}
which, together with (\ref{eq: stability III}) and (\ref{eq: stability IV}) shows that
\begin{equation}\nonumber
H\pa{\Pi_k(\mu_N) | \gamma^{\otimes k}} \leq 2k\pa{C+\log\pa{\Norm{f}_\infty}+\delta}+D,
\end{equation}
for some $C,D>0$ independent of $N$, and $\delta>0$. Thus, $\br{\Pi_k{\mu_N}}_{N\in\mathbb{N}}$ has bounded relative entropy with respect to $\gamma^{\otimes k}$ and we can apply the Dunford-Pettis compactness theorem and conclude that the densities of $\br{\Pi_k(\mu_N)}_{N\in\mathbb{N}}$ form a relatively compact set in $L^1(\mathbb{R}^k)$ with the weak topology. Since this is true for all $k$, and we know that $\br{\Pi_1(\mu_N)}_{N\in\mathbb{N}}$ converge weakly (in the measure sense) to $\mu$, with density function $f(v)$, we conclude that for any $\phi\in L^\infty(\mathbb{R})$ we have that
\begin{equation}\label{eq: stability V}
\int_{\mathbb{R}}\phi(v)d\Pi_1(\mu_N)(v) \underset{N\rightarrow\infty}{\longrightarrow}\int_{\mathbb{R}}\phi(v)f(v)dv.
\end{equation}
In particular, since $f\in L^{\infty}(\mathbb{R})$ and $f\geq 0$ we have that for any $\delta>0$
\begin{equation}\label{eq: stability VI}
\int_{\mathbb{R}}\log\pa{f(v)+\delta}d\Pi_1(\mu_N)(v) \underset{N\rightarrow\infty}{\longrightarrow}\int_{\mathbb{R}}\log\pa{f(v)+\delta}f(v)dv.
\end{equation}
Combining (\ref{eq: stability VI}), (\ref{eq: entropic closeness}) with the fact that $\Pi_1(\mu_N)$ converges to $f(v)dv$, we find that if $\br{\Pi_k\pa{\mu_{N_j}}}_{j\in\mathbb{N}}$ converges weakly to $\kappa_k$, then by (\ref{eq: semi lsc for k>1})
\begin{equation}\label{eq: stability VII}
\frac{H(\kappa_k | f^{\otimes k})}{k} \leq \int_{\mathbb{R}}\log \pa{f(v)+\delta}f(v)dv - \int_{\mathbb{R}}\log \pa{f(v)}f(v)dv
\end{equation}
where we have used the fact that $\int_{\mathbb{R}}v^2d\mu(v)=\int_{\mathbb{R}}v^2f(v)dv=1$. Using the dominated convergence theorem to take $\delta$ to zero shows that $H(\kappa_k | f^{\otimes k})=0$, and so
\begin{equation}\nonumber
\kappa_k=f^{\otimes k}\pa{v_1,\dots,v_k}dv_1\dots dv_k.
\end{equation}
Much like $\br{\Pi_1(\mu_N)}_{N\in\mathbb{N}}$, since $\br{\Pi_k(\mu_N)}_{N\in\mathbb{N}}$ is tight we can always find weak limits for some subsequences of it. We have just proved that all possible weak limits of subsequences of $\br{\Pi_k(\mu_N)}_{N\in\mathbb{N}}$ are $f^{\otimes k}$, from which we conclude that 
\begin{equation}\nonumber
\Pi_k(\mu_N) \underset{N\rightarrow\infty}{\rightharpoonup}f^{\otimes k},
\end{equation}
showing the chaoticity. It is worth to note that we actually proved more than the above: we have proved convergence in $L^1(\mathbb{R}^k)$ with the weak topology.\\
Going back to (\ref{eq: stability I.5}), and using (\ref{eq: entropic closeness}), (\ref{eq: stability VI}) and the known limit of $\frac{\log\pa{\mathcal{Z}_N\pa{f,\sqrt{N}}}}{N}$ we find that
\begin{equation}
\limsup_{N\rightarrow\infty}\frac{H_N(\mu_N | \sigma^N)}{N} \leq \int_{\mathbb{R}}\log\pa{f(v)+\delta}f(v)dv + \frac{\log(2\pi)+1}{2}.
\end{equation}\nonumber
Taking $\delta$ to zero we conclude that
\begin{equation}\label{eq: stability VIII}
\limsup_{N\rightarrow\infty}\frac{H_N(\mu_N | \sigma^N)}{N} \leq H(f | \gamma).
\end{equation}
Since the inequality
\begin{equation}\nonumber
\liminf_{N\rightarrow\infty}\frac{H_N(\mu_N | \sigma^N)}{N} \geq H(f | \gamma)
\end{equation}
follows from Theorem \ref{thm: CCRLV theorem}, we see that
\begin{equation}\label{eq: stability IX}
\lim_{N\rightarrow\infty}\frac{H_N(\mu_N | \sigma^N)}{N} = H(f | \gamma),
\end{equation}
proving the entropic chaoticity and completing the proof.
\end{proof}
The last proof of this section will involve the second 'closeness' criteria, associated with the Fisher information functional, and given by Theorem \ref{thm: stability property via fisher}. The proof is similar to those appearing in \cite{HM} and \cite{Carr} with appropriate modifications. The proof will rely heavily on tools from the field of Optimal Transportation.
\begin{proof}[Proof of Theorem \ref{thm: stability property via fisher}]
The first step of the proof will be to show that conditions (\ref{eq: uniform nu_{mu_N}}) and (\ref{eq: condition of stability with fisher II}) imply that the marginal limit, $f$, satisfies the conditions of Theorem \ref{thm: chaoticity and entropic chaoticity general}. \\
We start by showing that $f\in L^p(\mathbb{R})$ for some $p>1$. In \cite{HM} the authors have presented a lower semi continuity result for the relative Fisher Information, from which we conclude that
\begin{equation}\label{eq: stability 0}
I(f | \gamma) \leq \liminf_{N\rightarrow\infty}\frac{I_N(\mu_N | \sigma^N)}{N} \leq C.
\end{equation}
Denoting by \begin{equation}\nonumber
I(f)=\int_{\mathbb{R}}\frac{\abs{f^\prime (x)}}{f(x)}dx =4 \int_{\mathbb{R}}\abs{\frac{d}{dx}\sqrt{f(x)}}^2 dx
\end{equation} 
we see that
\begin{equation}\nonumber
I(f) = I(f|\gamma)+2-\int_{\mathbb{R}}v^2f(v)dv < C+2-\int_{\mathbb{R}}v^2f(v)dv < \infty,
\end{equation}
as $f$ is a weak limit of $\Pi_1(\mu_N)$, implying that
\begin{equation}\nonumber
\int_{\mathbb{R}^2}v^2 f(v)dv \leq \liminf_{N\rightarrow\infty}\int_{\mathbb{R}}v^2 d\Pi_1(\mu_N)(v)=1.
\end{equation}
We conclude that $\sqrt{f}\in H^1(\mathbb{R})$ and using a Sobolev embedding theorem we find that $\sqrt{f}\in L^\infty(\mathbb{R})$. Thus, since $f$ is also in $L^1(\mathbb{R})$, we have that $f\in L^p(\mathbb{R})$ for all $p\geq1$.\\
The next step will be to show that condition (\ref{eq: uniform nu_{mu_N}}) implies a uniform bound for the $1+\alpha$ moment of $\Pi_1(\mu_N)$, i.e.
\begin{equation}\label{eq: uniform boundness of 1+alpha moment of Pi_1( mu_N)}
\int_{\mathbb{R}}\abs{v_1}^{1+\alpha}d\Pi_1(\mu_N)(v_1) \leq C,
\end{equation}
for some $C>0$, independent of $N$. This will show that
\begin{equation}\label{eq: conservation of second moment}
\int_{\mathbb{R}}v^2 f(v)dv = \lim_{N\rightarrow\infty}\int_{\mathbb{R}}v^2 d\Pi_1(\mu_N)(v)=1,
\end{equation}
as well as
\begin{equation}\label{eq: conservation of 1+alpha boundness}
\int_{\mathbb{R}}\abs{v}^{1+\alpha} f(v)dv \leq \liminf_{N\rightarrow\infty}\int_{\mathbb{R}}\abs{v}^{1+\alpha} d\Pi_1(\mu_N)(v)\leq C.
\end{equation}

To prove (\ref{eq: uniform boundness of 1+alpha moment of Pi_1( mu_N)}) we notice that
\begin{equation}\label{eq: unfirom 1+alpha moment I}
\begin{gathered}
\int_{\mathbb{R}}|v_1|^{1+\alpha}d\Pi(\mu_N)(v_1)=\frac{3-\alpha}{2^{3-\alpha}-1}\int_{\mathbb{R}} \int_{\frac{\abs{v_1}}{2}}^{\abs{v_1}}x^{\alpha-4}v_1^4 d\Pi_1(\mu_N)(v_1) dx\\
=\frac{3-\alpha}{2^{3-\alpha}-1}\int_0^\infty x^{\alpha-4} \pa{\int_{-2x}^{-x} v^4_1 d\Pi(\mu_N)(v_1) + \int_{x}^{2x}v^4_1 d\Pi(\mu_N)(v_1)} dv_1  dx \\
=\frac{3-\alpha}{2^{3-\alpha}-1}\int_0^\infty x^{\alpha-4} \pa{\int_{-2x}^{2x} v^4_1 d\Pi(\mu_N)(v_1) - \int_{-x}^{x}v^4_1 d\Pi(\mu_N)(v_1)} dv_1  dx \\
\end{gathered}
\end{equation}
Using condition (\ref{eq: uniform nu_{mu_N}}) we know that for any $\epsilon>0$ we can find $R>0$, such that for any $\abs{x}>R$ and any $N\in\mathbb{N}$
\begin{equation}\label{eq: unfirom 1+alpha moment II}
(1-\epsilon)C_S x^{2-\alpha} \leq \int_{-\sqrt{x}}^{\sqrt{x}}v_1^4 d\Pi_1(\mu_N)(v_1) \leq (1+\epsilon)C_S x^{2-\alpha}
\end{equation}
In addition, for any probability measure $\mu$ on $\mathbb{R}$ we have that
\begin{equation}\label{eq: unfirom 1+alpha moment III}
\int_{-x}^x v^4 d\mu(v) \leq 2x^4.
\end{equation}
Combining (\ref{eq: unfirom 1+alpha moment I}), (\ref{eq: unfirom 1+alpha moment II}) and (\ref{eq: unfirom 1+alpha moment III}) we conclude that
\begin{equation}\label{eq: unfirom 1+alpha moment IV}
\begin{gathered}
\int_{\mathbb{R}}|v_1|^{1+\alpha}d\Pi(\mu_N)(v_1) \leq  \frac{3-\alpha}{2^{3-\alpha}-1} \Bigg(32 R^{\frac{\alpha+1}{2}} \\
+ C_S\pa{(1+\epsilon)2^{4-2\alpha}-(1-\epsilon)}\int_{\sqrt{R}}^\infty \frac{dx}{x^\alpha} \Bigg) = C
\end{gathered}
\end{equation}
for a choice of $0<\epsilon<1$. \\
Lastly, we want to show that $\nu_f$, defined in Theorem \ref{thm: chaoticity and entropic chaoticity general}, satisfies the appropriate growth condition.\\
Since $\Pi_1(\mu_N)$ converges to $f$ weakly, we have that for any lower semi continuous function, $\phi$, that is bounded from below,
\begin{equation}\label{eq: conditions on pi_1(mu_N) imply nu_f is good I}
\int_{\mathbb{R}}\phi(v)f(v)dv \leq \liminf_{N\rightarrow\infty}\int_{\mathbb{R}}\phi(v_1)d\Pi_1(\mu_N)(v_1). 
\end{equation}
Similarly, if $\phi$ is upper semi continuous and bounded from above then 
\begin{equation}\label{eq: conditions on pi_1(mu_N) imply nu_f is good II}
\int_{\mathbb{R}}\phi(v)f(v)dv \geq \limsup_{N\rightarrow\infty}\int_{\mathbb{R}}\phi(v_1)d\Pi_1(\mu_N)(v_1). 
\end{equation}
Choosing $\phi(v)=v^4 \chi_{(-\sqrt{x},\sqrt{x})}(v)$ and $\phi(v)=v^4 \chi_{[-\sqrt{x},\sqrt{x}]}(v)$ respectively, and using condition (\ref{eq: uniform nu_{mu_N}}) proves that
\begin{equation}\nonumber
\nu_f(x) = \int_{-\sqrt{x}}^{\sqrt{x}}v^4 f(v)dv \underset{x\rightarrow\infty}{\sim}C_S x^{2-\alpha},
\end{equation}
and we can conclude that $f$ satisfies the conditions of Theorem \ref{thm: chaoticity and entropic chaoticity general}. This implies that the function $F_N=\frac{f^{\otimes N}}{\mathcal{Z}_N\pa{f,\sqrt{N}}}$ is well defined, and as usual we denote $\nu_N=F_N d\sigma^N$.\\
Next, we will show that $\frac{I_N(\nu_N |\sigma^N)}{N} $ is uniformly bounded in $N$. Denoting by $\nabla$ the normal gradient on $\mathbb{R}^N$ and by $\nabla_S$ its tangential component to Kac's sphere we find that
\begin{equation}\label{eq: stability fisher I}
\begin{gathered}
\int_{\mathbb{S}^{N-1}\pa{\sqrt{N}}}\frac{\abs{\nabla_S F_N}^2}{F_N}d\sigma^N \leq  \frac{1}{\mathcal{Z}_N\pa{f,\sqrt{N}}}\int_{\mathbb{S}^{N-1}\pa{\sqrt{N}}}\frac{\abs{\nabla f^{\otimes N}}^2}{f^{\otimes N}}d\sigma^N \\
=\sum_{i=1}^N \frac{1}{\mathcal{Z}_N\pa{f,\sqrt{N}}}\int_{\mathbb{S}^{N-1}\pa{\sqrt{N}}}\frac{\abs{ f^\prime (v_i)}^2}{f(v_i)}\Pi_{j=1,j\not=i}^N f(v_j)d\sigma^N \\
=N\int_{\mathbb{R}}\frac{\abs{\mathbb{S}^{N-2}}\pa{N-v_1^2}_+^\frac{N-3}{2}{}}{\abs{\mathbb{S}^{N-1}}N^{\frac{N-2}{2}}}\frac{\mathcal{Z}_{N-1}\pa{f, \sqrt{N-v_1^2}}}{\mathcal{Z}_N\pa{f,\sqrt{N}}}\cdot \frac{\abs{f^\prime (v_1)}^2}{f(v_1)}dv_1,
\end{gathered}
\end{equation}
where we have used Lemma \ref{lem: integration over sphere}, and the definition of the normalisation function. Using the asymptotic behaviour of $\mathcal{Z}_N\pa{f,\sqrt{r}}$ from Theorem \ref{thm: asyptotic behaviour for h conv N and normalisation function} we conclude that
\begin{equation}\label{eq: stability fisher II}
\begin{gathered}
\frac{I_N(\nu_N | \sigma^N)}{N} \leq \pa{\frac{N}{N-1}}^{\frac{1}{\alpha}} \int_{\mathbb{R}}\frac{\gamma_{\sigma,\alpha,1}\pa{\frac{1-v_1^2}{N^{\frac{1}{\alpha}}}}+\lambda_{N-1}\pa{N-v_1^2}}{\gamma_{\sigma,\alpha,1}(0)+\lambda_N(N)} \frac{\abs{f^\prime(v_1)}^2}{f(v_1)}dv_1 \\
\leq C I(f) \leq C_1,
\end{gathered}
\end{equation}
for $C_1 >0$, independently of $N$.\\
At this point we'd like to invoke the HWI inequality, a strategy that was first proved to be successful in this context in \cite{HM} and \cite{MM}. In our settings we find that 
\begin{equation}\label{eq: HWI }
\begin{gathered}
H(\mu_N | \sigma^N)-H(\nu_N | \sigma^N) \leq \frac{\pi}{2}\sqrt{I_N(\mu_N | \sigma_N)}W_2(\mu_N,\nu_N) \\
H(\nu_N | \sigma^N)-H(\mu_N | \sigma^N) \leq \frac{\pi}{2}\sqrt{I_N(\nu_N | \sigma_N)}W_2(\mu_N,\nu_N),
\end{gathered}
\end{equation}
where $W_2$ stands for the quadratic Wasserstein distance with distance function induced from the quadratic distance function on $\mathbb{R}^N$:
\begin{equation}\nonumber
W^2_2(\mu_N,\nu_N)=\inf_{\pi\in\Pi(\mu_N,\nu_N)}\int_{\mathbb{S}^{N-1}\pa{\sqrt{N}}\times \mathbb{S}^{N-1}\pa{\sqrt{N}}}\abs{x-y}^2 d\pi(x,y),
\end{equation}
where $\Pi(\mu_N,\nu_N)$, the space of pairing, is the space of all probability measures on $\mathbb{S}^{N-1}\pa{\sqrt{N}}\times \mathbb{S}^{N-1}\pa{\sqrt{N}}$ with marginal $\mu_N$ and $\nu_N$ respectively. \\
The reason we are allowed to use the HWI inequality follows from the fact that Kac's sphere has a positive Ricci curvature. Moreover, in the original statement of the HWI inequality, the quadratic Wasserstein distance is taken with the quadratic \emph{geodesic} distance, yet, fortunately for us, it is equivalent to the normal distance on $\mathbb{R}^N$, hence the factor $\frac{\pi}{2}$ that appears in (\ref{eq: HWI }). For more information about the Wasserstein distance and the HWI inequality, we refer the interested reader to \cite{Vtransport}.\\
 Combining (\ref{eq: HWI }) with the boundness of the rescaled relative Fisher information of $\mu_N$ and $\nu_N$ with respect to $\sigma^N$, we conclude that
\begin{equation}\label{eq: stability fisher III}
\abs{\frac{H(\mu_N | \sigma^N)}{N}-\frac{H(\nu_N | \sigma^N)}{N}} \leq C\frac{W_2(\mu_N,\nu_N)}{\sqrt{N}}
\end{equation}  
for some $C>0$.\\
The next step of the proof is to show that the first marginals of $\mu_N$ and $\nu_N$ have some joint bounded moment of order $l>2$, uniformly in $N$. This will help us give a quantitative estimation to the quadratic Wasserstein distance. Indeed, using several results from \cite{HM}, one can show the following estimation:
\begin{equation}\label{eq: stability fisher V}
\frac{W_2(\kappa_N,f^{\otimes N})}{\sqrt{N}} \leq C_1 B_l^{\frac{1}{l}}\pa{W_1\pa{\Pi_2(\kappa_N),f^{\otimes 2}}+\frac{1}{N^{p_1}}}^{\frac{1}{2}-\frac{1}{l}}
\end{equation}
where  $C_1$ and $p_1$ are positive constants that depends only on $l>2$, $\kappa_N$ is a probability measure on Kac's sphere, $f$ is a probability measure on $\mathbb{R}$ and
\begin{equation}\nonumber
B_l = \int_{\mathbb{R}}|v_1|^ld\Pi_1(\kappa_N)(v_1) + \int_{\mathbb{R}}|v_1|^l f(v_1)dv_1<\infty.
\end{equation} 
We have already shown that $\br{\Pi_1(\mu_N)}_{N\in\mathbb{N}}$ has a uniformly bounded moment of order $1+\alpha$. Using (\ref{eq: chaoticity and entropic chatoicity proof II}) from the proof of Theorem \ref{thm: chaoticity and entropic chaoticity general}, we find that
\begin{equation}\nonumber
\begin{gathered}
\int_{\mathbb{R}}|v_1|^{1+\alpha} d\Pi_1(\nu_N)(v_1)=\pa{\frac{N}{N-1}}^{\frac{1}{\alpha}}\int_{\abs{v_1}\leq \sqrt{N}}\frac{\gamma_{\sigma,\alpha,1}\pa{\frac{1-v_1^2}{N^{\frac{1}{\alpha}}}}+ \lambda_{N-1}\pa{N-v_1^2}}{\gamma_{\sigma,\alpha,1}(0)+ \lambda_N(N)}
\abs{v_1}^{1+\alpha}f(v_1)dv_1 
\end{gathered}
\end{equation}
for some $\sigma>0$, $1<\alpha<2$ and $\lambda_{N-k},\lambda_N$ with 
\begin{equation}\nonumber
\sup_u\abs{\lambda_{N-1}(u)}\underset{N\rightarrow\infty}{\longrightarrow}0, \quad\lambda_N(N)\underset{N\rightarrow\infty}{\longrightarrow}0.
\end{equation} 
Thus, along with (\ref{eq: conservation of 1+alpha boundness}), we conclude that
\begin{equation}\label{eq: stability fisher IV}
\int_{\mathbb{R}}|v_1|^{1+\alpha} d\Pi_1(\nu_N)(v_1) \leq C,
\end{equation}
for some $C>0$. \\
Defining 
\begin{equation}
\begin{gathered}
M = \int_{\mathbb{R}}|v_1|^{1+\alpha} d\Pi_1(\mu_N)(v_1)+\int_{\mathbb{R}}|v_1|^{1+\alpha} d\Pi_1(\nu_N)(v_1) \\
+ \int_{\mathbb{R}}|v_1|^{1+\alpha} f(v_1)dv_1 <\infty
\end{gathered}
\end{equation}
and combining (\ref{eq: stability fisher III}), (\ref{eq: stability fisher V})), and the triangle inequality for the Wasserstein distance, leads us to conclude that
\begin{equation}\label{eq: stability fisher VI}
\begin{gathered}
\abs{\frac{H(\mu_N | \sigma^N)}{N}-\frac{H(\nu_N | \sigma^N)}{N}} \leq C M^{\frac{1}{1+\alpha}}\Bigg[\pa{W_1\pa{\Pi_2(\mu_N),f^{\otimes 2}} +\frac{1}{N^{p_1}}}^{\frac{1}{2}-\frac{1}{1+\alpha}}\\
+\pa{W_1\pa{\Pi_2(\nu_N),f^{\otimes 2}}+\frac{1}{N^{p_1}}}^{\frac{1}{2}-\frac{1}{1+\alpha}}\Bigg].
\end{gathered}
\end{equation}
As $\Pi_2(\nu_N), \Pi_2(\nu_N)$ and $f^{\otimes 2}$ all have unit second moment (for any $N$), the Wasserstein distance is equivalent to weak topology with respect to them. Since $\br{\mu_N}_{N\in\mathbb{N}}$ and $\br{\nu_N}_{N\in\mathbb{N}}$ are $f-$chaotic, we conclude that
\begin{equation}\nonumber
W_1\pa{\Pi_2(\mu_N),f^{\otimes 2}}\underset{N\rightarrow\infty}{\longrightarrow}0, \quad W_1\pa{\Pi_2(\nu_N),f^{\otimes 2}}\underset{N\rightarrow\infty}{\longrightarrow}0,
\end{equation}
implying that
\begin{equation}\label{eq: stability fisher VII}
\lim_{N\rightarrow\infty}\abs{\frac{H(\mu_N | \sigma^N)}{N}-\frac{H(\nu_N | \sigma^N)}{N}}=0.
\end{equation}
We are almost ready to conclude the proof. Before we do, we use the lower semi continuity of the entropy, discussed in Theorem \ref{thm: CCRLV theorem}, to see that
\begin{equation}\nonumber
H(f|\gamma) \leq \liminf_{N\rightarrow\infty}\frac{H_N(\mu_N|\sigma^N)}{N} \leq C <\infty.
\end{equation}
Thus, 
\begin{equation}\label{eq: stability fisher VIII}
\begin{gathered}
\abs{\frac{H(\mu_N|\sigma^N)}{N}-H(f|\gamma)} \leq \abs{\frac{H(\mu_N | \sigma^N)}{N}-\frac{H(\nu_N | \sigma^N)}{N}} \\
+ \abs{\frac{H(\nu_N | \sigma^N)}{N}-H(f|\gamma)}\underset{N\rightarrow\infty}{\longrightarrow}0,
\end{gathered}
\end{equation}
where we have used (\ref{eq: stability fisher VII}) and Theorem \ref{thm: chaoticity and entropic chaoticity general}, completing proof.
\end{proof}
\begin{remark}
We'd like to point out that following the above proof, one can see that condition (\ref{eq: uniform nu_{mu_N}}), giving us a uniform asymptotic behaviour for the fourth moments of the first marginals of $\br{\mu_N}_{N\in\mathbb{N}}$, can be replaced with the conditions that $f$ satisfies the conditions of Theorem \ref{thm: chaoticity and entropic chaoticity general}, and the first marginals of $\br{\mu_N}_{N\in\mathbb{N}}$ have a uniformly bounded $k-$th moment, for some $k>2$. This gives us a different approach to the stability problem, expressed with the Fisher information functional, one that assumes less information on the first marginals, but more conditions on the marginal limit. 
\end{remark}

\section{Final Remarks.}\label{sec: final remarks}
While Kac's model, chaoticity and entropic chaoticity, and the many body Cercignani's conjecture are far from being completely understood and resolved, we hope that our paper has shed some light on the interplay between the moments of a generating function and its associated tensorised measure, restricted to Kac's sphere. As an epilogue, we present here a few remarks about our work, along with associated questions we'll be interested in investigating next.
\begin{itemize}
\item One fundamental problem we're very interested in is finding conditions under which the many body Cercignani's conjecture is valid. While our work showed that the requirement of a bounded fourth moment is not a major issue for chaoticity and even entropic chaoticity, we still believe that the fourth moment plays an important role in the conjecture. At the very least, due to its probabilistic interpretation as a measurement of deviation from the sphere, we believe that the fourth moment will be needed for an initial positive answer to the conjecture.
\item The following was communicated to us by Cl\'ement Mouhot: Using a Talagrand inequality, one can show that if the family of functions $\br{G_N}_{N\in\mathbb{N}}$, restricted to the sphere, satisfies a Log-Sobolev inequality that is uniform in $N$, one has that
\begin{equation}\nonumber
\lim_{N\rightarrow\infty}\frac{H(F_N|G_N)}{N}=0
\end{equation}
implies that $\lim_{N\rightarrow\infty}\pa{\Pi_k(F_N)-\Pi_k(G_N)}=0$. Our stability result, Theorem \ref{thm: stability property}, gives many examples where the function $G_N$ doesn't satisfy any Log-Sobolev inequality (due to how the underlying function behaves), but we still get equality of marginal. Moreover, we actually get that $F_N$ is entropically chaotic! The connection between the limit of the 'distance' 
\begin{equation}\nonumber
d(F_N,G_N)=\frac{H(F_N|G_N)}{N}
\end{equation}
and the convergence of marginals is still not understood fully.
\item We'll be interested to know if one can find an easy criteria for which we can evaluate quantitatively the convergence of $h^{\ast N}$ (appearing in Theorem \ref{thm: asyptotic behaviour for h conv N and normalisation function}) without relying on the reminder function. This will allow for possibilities to extend the work done by the second author in \cite{Einav1,Einav2} and allow the underlying generating function, $f$, to rely on $N$ as well. While we present such quantitative estimation in the Appendix, we found them to be unusable while trying to deal with concrete examples.
\end{itemize}

\appendix
\section{Additional Proofs.}
In this section of the appendix we will present several proofs of technical items we thought would only hinder the flow of the paper.
\begin{proof}[Proof of Lemma \ref{lem: technical FDA NDA lemma}]
Assume that the conclusion is false. We can find a sequence $x_n \underset{n\rightarrow\infty}{\longrightarrow} 0$, $x_n\not=0$, and an $\epsilon_0>0$ such that 
\[|g(x_n)|\geq \epsilon_0.\]
Due to continuity, we can find $d_1>0$ such that for any $x\in [x_1,x_1+d_1]$ we have
\[|g(x)|\geq \frac{\epsilon_0}{2}.\]
Denote $n_1=1$, $x_{k_1}=x_1$ and $\xi_1=n_1 \cdot x_1 = x_1$.\\
Since $x_n$ converges to zero and is non zero, we can find $x_{k_2}$ such that $0<x_{k_2}<\frac{\xi_1}{2}$. Let $n_2= \left[ \frac{\xi_1}{x_{k_2}}\right]+1\geq 2$, where $[\cdot]$ is the lower integer part function. We may assume that $x_{k_2}<d_1$ and conclude that
\begin{equation}\nonumber
\xi_1 \leq n_2 x_{k_2} < \xi_1 + x_{k_2} \leq \xi_1+n_1d_1. 
\end{equation}
Next, we can find $d_2$ such that $n_2(x_{k_2}+d_2) \leq \xi_1+n_1d_1$. We may also assume that $d_2$ is small enough so that $x\in[x_{k_2},x_{k_2}+d_2]$ implies 
\[|g(x)|\geq \frac{\epsilon_0}{2}.\]
Denoting by $\xi_2=n_2 x_{k_2}$, we notice that $[\xi_2,\xi_2+n_2 d_2]\subset [\xi_1,\xi_1+n_1d_1]$ and the closed intervals are non empty. \\
We continue by induction. Assume we found $n_i,k_i\in\mathbb{N}$, $n_i \geq i$, and $d_i>0$ for $i=1,\dots,j$ such that $\xi_i=n_i x_{k_i}$ satisfies
\[ [\xi_i,\xi_i + n_i d_i] \subset [\xi_{i-1}, \xi_{i-1}+n_{i-1}d_{i-1}]\]
and for any $x\in[\xi_i,\xi_i+n_id_i]$ we have that
\[\left\lvert g\left(\frac{x}{n_i} \right)\right\rvert\geq \frac{\epsilon_0}{2}.\]
We find $x_{k_{j+1}}$ such that $x_{k_{j+1}}<\frac{\xi_j}{j+1}$ and define $n_j=\left[\frac{\xi_j}{x_{k_{j+1}}} \right]+1 \geq j+1$. As such, we have that
\[\xi_j \leq n_{j+1}x_{k_{j+1}}< \xi_j + x_{k_{j+1}}< \xi_j +n_j d_j,\]
where the last inequality is valid since we can pick $x_{k_{j+1}}<n_j d_j$. We can find $d_{j+1}$ such that $n_{j+1}(x_{k_{j+1}}+d_{j+1}) < \xi_j +n_j d_j$ and for any $x\in [x_{k_{j+1}},x_{k_{j+1}}+d_{j+1}]$ 
\[|g(x)| \geq \frac{\epsilon_0}{2}.\]
Denoting $\xi_{j+1}=n_{j+1}x_{k_{j+1}}$ gives us the interval with the desired properties.\\
Since we have a nested sequence of non-empty closed intervals in $\mathbb{R}$ we know that the intersection of all of them must be non-empty. Thus, there exists $x\in [\xi_i,\xi_i+n_i d_i]$ for all $i\in\mathbb{N}$. Moreover, by construction
\[\left\lvert g\left(\frac{x}{n_i} \right)\right\rvert \geq \frac{\epsilon_0}{2}\]
which contradicts the assumption that $\lim_{n\rightarrow\infty}g\left(\frac{x}{n} \right)=0$ for any $x\not=0$.
\end{proof}
The next result we will prove, is  Lemma \ref{lem: g in NDA implies g is exponential at low frequencies}:
\begin{proof}[Proof of Lemma \ref{lem: g in NDA implies g is exponential at low frequencies}]
Since $\widehat{g}$ is in the NDA of $\widehat{\gamma}_{\sigma,\alpha,\beta}$ we conclude that $\widehat{g}$ is actually in the FDA of $\widehat{\gamma}_{\sigma,\alpha,\beta}$, due to Theorem \ref{thm: FDA and NDA}. Thus, there exists $\eta_1$, with $\frac{\eta_1(\xi)}{\abs{\xi}^{\alpha}}\in L^\infty(\mathbb{R})$ and 
\[\frac{\eta_1(\xi)}{\abs{\xi}^{\alpha}}\underset{\xi\rightarrow 0}{\longrightarrow}0,\]
such that
\[\widehat{g}(\xi)=1-\sigma \abs{\xi}^\alpha \pa{1+i\beta \text{sgn}(\xi)\tan\pa{\frac{\pi\alpha}{2}}}+\eta_1(\xi)\]
\[=e^{-\sigma \abs{\xi}^\alpha \pa{1+i\beta \text{sgn}(\xi)\tan\pa{\frac{\pi\alpha}{2}}}}+\eta_2(\xi)+\eta_1(\xi),\]
where $\eta_2(\xi)$ has the same properties as $\eta_1(\xi)$. We conclude that
\[\abs{\widehat{g}(\xi)} \leq e^{-\sigma \abs{\xi}^{\alpha}}+\abs{\eta_1(\xi)+\eta_2(\xi)}\leq 1-\sigma \abs{\xi}^\alpha + \abs{\eta_1(\xi)}+\abs{\eta_2(\xi)}+\abs{\eta_3(\xi)},\]
where $\eta_3(\xi)$ has the same properties as $\eta_1(\xi)$.\\
 Let $\beta_0>0$ be such that if $\abs{\xi}<\beta_0$ 
\[\abs{\eta_1(\xi)}+\abs{\eta_2(\xi)}+\abs{\eta_3(\xi)} \leq \frac{\sigma \abs{\xi}^\alpha}{2}.\]
For any $\abs{\xi}<\beta_0$ one has that
\[\abs{\widehat{g}(\xi)} \leq 1-\frac{\sigma \abs{\xi}^\alpha}{2} \leq e^{-\frac{\sigma\abs{\xi}^\alpha}{2}},\]
completing the proof.
\end{proof}

\section{Quantitative Approximation Theorem.}\label{app: quantitative approximation theorem}
An item of great importance in Kinetic Theory, and our problem in particular, is \emph{quantitative} estimation of errors. Our local L\'evy Central Limit Theorem involves such an estimation, yet it is dependent on the function
\begin{equation}\nonumber
\omega(\beta)=\sup_{\abs{\xi}}\frac{\abs{\eta(\xi)}}{\abs{\xi}^{\alpha}},
\end{equation}
where $\eta$ is the reminder function of a probability density function $g$ in the NDA of some $\widehat{\gamma}_{\sigma,\alpha,\beta}$. In some cases one can find explicit estimation for the behaviour of $\eta$ near zero, and get a better quantitative estimation on the error term $\epsilon(N)$. Such conditions are explored in \cite{GJT} and we will satisfy ourselves by mentioning them, but providing no proof. \\
\begin{definition}\label{def: FDA of order delta}
Let $\delta>0$. The Fourier Domain of Attraction \emph{of order} $\delta$ of $\widehat{\gamma}_{\sigma,\alpha,\beta}$ is the subset of the FDA of $\widehat{\gamma}_{\sigma,\alpha,\beta}$ such that the reminder function, $\eta$, satisfies
\begin{equation}\nonumber
\frac{\abs{\eta(\xi)}}{\abs{\xi}^{\alpha}}\leq C \abs{\xi}^\delta,
\end{equation}
for some $C>0$.
\end{definition}
Clearly the FDAs of order $\delta$ are nested sets, all contained in the $FDA$. Also, if $g$ is in the FDA of order $\delta$ of $\widehat{\gamma}_{\sigma,\alpha,\beta}$ then we can replace $\omega(\beta)$, defined in Theorem \ref{thm: main approximation themroem} by $C\beta^{\delta}$ and get an explicit estimation to the error term $\epsilon(N)$!.\\
The following is a variant of a theorem appearing in \cite{GJT} that gives sufficient conditions to be in the FDA of order $\delta$ of some $\widehat{\gamma}_{\sigma,\alpha,\beta}$:
\begin{theorem}\label{thm: appendix FDA of order delta}
Let $g$ be a probability density on $\mathbb{R}$ that has zero mean. Let $1<\alpha<2$ and $0<\delta<2-\alpha$ be given. Then if
\begin{equation}\label{eq: appendix FDA of order delta}
\int_{\mathbb{R}}\abs{x}^{\alpha+\delta}\abs{g(x)-\gamma_{\sigma,\alpha,\beta}(x)}dx < \infty
\end{equation}
for some $\sigma>0$ and $\beta\in [-1,1]$, $g$ is in the FDA of order $\delta$ of $\widehat{\gamma}_{\sigma,\alpha,\beta}$.
\end{theorem}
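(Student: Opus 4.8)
The plan is to work directly with the remainder function and reduce everything to an elementary Fourier estimate. By Definition \ref{def: FDA}, writing
\[
\eta(\xi):=\widehat{g}(\xi)-1+\sigma\abs{\xi}^{\alpha}\pa{1+i\beta\,\text{sgn}(\xi)\tan\tfrac{\pi\alpha}{2}},
\]
it suffices to produce a constant $C>0$ with $\abs{\eta(\xi)}\leq C\abs{\xi}^{\alpha+\delta}$ for all $\xi$: this immediately gives both $\eta(\xi)/\abs{\xi}^{\alpha}\to 0$ as $\xi\to 0$ and $\eta(\xi)/\abs{\xi}^{\alpha}\in L^{\infty}$ (for $\abs{\xi}$ bounded away from $0$ one may also crudely bound $\abs{\eta(\xi)}\leq 2+\sigma\abs{\xi}^{\alpha}\sqrt{1+\beta^{2}\tan^{2}(\pi\alpha/2)}$), hence $g$ lies in the FDA of $\widehat{\gamma}_{\sigma,\alpha,\beta}$ with reminder function satisfying $\abs{\eta(\xi)}/\abs{\xi}^{\alpha}\leq C\abs{\xi}^{\delta}$, which is exactly the FDA of order $\delta$. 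I would then split $\eta=\eta_{1}+\eta_{2}$ with
\[
\eta_{1}(\xi)=\widehat{g}(\xi)-\widehat{\gamma}_{\sigma,\alpha,\beta}(\xi),\qquad
\eta_{2}(\xi)=\widehat{\gamma}_{\sigma,\alpha,\beta}(\xi)-1+\sigma\abs{\xi}^{\alpha}\pa{1+i\beta\,\text{sgn}(\xi)\tan\tfrac{\pi\alpha}{2}}.
\]

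For $\eta_{2}$, write $\widehat{\gamma}_{\sigma,\alpha,\beta}(\xi)=e^{-z}$ with $z=\sigma\abs{\xi}^{\alpha}(1+i\beta\,\text{sgn}(\xi)\tan(\pi\alpha/2))$, so that $\mathrm{Re}\,z\geq0$, and use the second-order Taylor remainder $\abs{e^{-z}-1+z}\leq\tfrac12\abs{z}^{2}$ (valid since $\mathrm{Re}\,z\ge0$). This yields $\abs{\eta_{2}(\xi)}\leq\tfrac12\sigma^{2}(1+\beta^{2}\tan^{2}(\pi\alpha/2))\abs{\xi}^{2\alpha}$. Because $1<\alpha<2$ forces $\delta<2-\alpha<1<\alpha$, we have $2\alpha>\alpha+\delta$, so $\eta_{2}$ is dominated by a constant times $\abs{\xi}^{\alpha+\delta}$ near the origin (and is bounded anyway for $\abs{\xi}$ large).

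The estimate on $\eta_{1}$ is where the hypothesis (\ref{eq: appendix FDA of order delta}) enters. First note $\int_{\mathbb{R}}\abs{x}\abs{g-\gamma_{\sigma,\alpha,\beta}}\,dx<\infty$: on $\abs{x}\leq1$ it is bounded by $\int (g+\gamma_{\sigma,\alpha,\beta})\leq2$, and on $\abs{x}>1$ by $\int\abs{x}^{\alpha+\delta}\abs{g-\gamma_{\sigma,\alpha,\beta}}\,dx<\infty$ since $\alpha+\delta>1$. Hence $\int(g-\gamma_{\sigma,\alpha,\beta})\,dx=0$ (both are probability densities) and $\int x(g-\gamma_{\sigma,\alpha,\beta})\,dx=0$ ($g$ is centered by assumption, and the strictly stable law $\gamma_{\sigma,\alpha,\beta}$ is centered for $1<\alpha<2$, since $\widehat{\gamma}_{\sigma,\alpha,\beta}$ is differentiable at $0$ with zero derivative). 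Subtracting the first two terms of the exponential series,
\[
\eta_{1}(\xi)=\int_{\mathbb{R}}\pa{e^{ix\xi}-1-ix\xi}\pa{g(x)-\gamma_{\sigma,\alpha,\beta}(x)}\,dx,
\]
and using the elementary bound $\abs{e^{iu}-1-iu}\leq2\abs{u}^{s}$ for $s\in[1,2]$ (split into $\abs{u}\leq1$, where $\abs{u}^{2}/2\leq\abs{u}^{s}$, and $\abs{u}>1$, where $2\abs{u}\leq2\abs{u}^{s}$) with $s=\alpha+\delta$, I obtain
\[
\abs{\eta_{1}(\xi)}\leq 2\abs{\xi}^{\alpha+\delta}\int_{\mathbb{R}}\abs{x}^{\alpha+\delta}\abs{g(x)-\gamma_{\sigma,\alpha,\beta}(x)}\,dx=:C\abs{\xi}^{\alpha+\delta},
\]
finite precisely by (\ref{eq: appendix FDA of order delta}). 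Combining the bounds on $\eta_{1}$ and $\eta_{2}$ finishes the proof. The one point I would phrase with care — the only genuine subtlety — is that $\gamma_{\sigma,\alpha,\beta}$ itself has \emph{infinite} moment of order $\alpha+\delta>\alpha$, so one must never split $g-\gamma_{\sigma,\alpha,\beta}$ into two integrals; every moment computation has to be performed on the difference, which is exactly why the hypothesis is stated in terms of $\abs{g-\gamma_{\sigma,\alpha,\beta}}$.
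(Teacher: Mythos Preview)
Your argument is correct. The paper does not actually prove this theorem: in Appendix~B the authors explicitly say they ``will satisfy ourselves by mentioning them, but providing no proof,'' citing \cite{GJT} instead. So there is nothing in the paper to compare against; you have supplied what the authors deliberately omitted, and your decomposition $\eta=\eta_1+\eta_2$ together with the subtraction of the linear Fourier term (made legitimate by the zero mean of both $g$ and the strictly stable law for $1<\alpha<2$) is the natural route. One small wording point: when you write that $\eta_2$ ``is bounded anyway for $|\xi|$ large,'' what is actually bounded is $\eta_2(\xi)/|\xi|^{\alpha}$, not $\eta_2$ itself, which grows like $|\xi|^{\alpha}$; this is precisely what is needed for the FDA condition and follows from the crude estimate you already recorded, so the conclusion is unaffected. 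Your closing caution about never splitting the integral of $|x|^{\alpha+\delta}|g-\gamma_{\sigma,\alpha,\beta}|$ into two pieces is exactly the point of the hypothesis, since $\gamma_{\sigma,\alpha,\beta}$ alone has infinite moment of order $\alpha+\delta$.
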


\end{document}